
\documentclass{amsart}
\usepackage{amssymb}
\usepackage{amsfonts}
\usepackage{geometry}

\setcounter{MaxMatrixCols}{27}

\newtheorem{theorem}{Theorem}
\theoremstyle{plain}

\newtheorem{conjecture}[theorem]{Conjecture}

\newtheorem{definition}[theorem]{Definition}

\newtheorem{lemma}[theorem]{Lemma}

\newtheorem{proposition}[theorem]{Proposition}

\numberwithin{equation}{section}
\input{tcilatex}
\geometry{left=1in,right=1in,top=1in,bottom=1in}

\begin{document}
\title{A counterexample in the two weight theory for Calder\'{o}n-Zygmund
operators}
\author[E.T. Sawyer]{Eric T. Sawyer}
\address{ Department of Mathematics \& Statistics, McMaster University, 1280
Main Street West, Hamilton, Ontario, Canada L8S 4K1 }
\email{sawyer@mcmaster.ca}
\thanks{Research supported in part by NSERC}
\author[C.-Y. Shen]{Chun-Yen Shen}
\address{ Department of Mathematics \\
National Taiwan University \\
10617, Taipei}
\email{cyshen@math.ntu.edu.tw}
\thanks{C.-Y. Shen supported in part by MOST, through grant 104-2628-M-002
-015 -MY4}
\author[I. Uriarte-Tuero]{Ignacio Uriarte-Tuero}
\address{ Department of Mathematics \\
Michigan State University \\
East Lansing MI }
\email{ignacio@math.msu.edu}
\thanks{ I. Uriarte-Tuero has been partially supported by grant
MTM2015-65792-P (MINECO, Spain).}
\date{July 15, 2016}

\begin{abstract}
We give an example of a pair of weights $\left( \widehat{\sigma },\widehat{%
\omega }\right) $ on the line, and an elliptic singular integral operator $%
H_{\flat }$ on the line, such that $H_{\flat ,\sigma }$ is bounded from $%
L^{2}\left( \widehat{\sigma }\right) $ to $L^{2}\left( \widehat{\omega }%
\right) $, yet the measure pair $\left( \widehat{\sigma },\widehat{\omega }%
\right) $ fails to satisfy one of the energy conditions. The convolution
kernel $K_{\flat }\left( x\right) $ of the operator $H_{\flat }$ is a smooth
flattened version of the Hilbert transform kernel $K\left( x\right) =\frac{1%
}{x}$ that satisfies ellipticity $\left\vert K_{\flat }\left( x\right)
\right\vert \gtrsim \frac{1}{\left\vert x\right\vert }$, but not gradient
ellipticity $\left\vert K_{\flat }^{\prime }\left( x\right) \right\vert
\gtrsim \frac{1}{\left\vert x\right\vert ^{2}}$. Indeed the kernel has flat
spots where $K_{\flat }^{\prime }\left( x\right) =0$ on a family of
intervals, but $K_{\flat }^{\prime }\left( x\right) $ is otherwise negative
on $\mathbb{R}\setminus \left\{ 0\right\} $. On the other hand, if a
one-dimensional kernel $K\left( x,y\right) $ is both elliptic and gradient
elliptic, then the energy conditions are necessary, and so by our theorem in 
\cite{SaShUr10}, the $T1$ theorem holds for such kernels on the line.
\end{abstract}

\maketitle
\tableofcontents

\section{Introduction}

This paper addresses the main obstacle, namely the energy condition, arising
in the theory of two weight norm inequalities for operators with
cancellation (singular integrals) in the aftermath of the solution to the $%
T1 $ conjecture for the Hilbert transform in the two part paper \cite%
{LaSaShUr3}, \cite{Lac} by the authors and M. Lacey (see also \cite{Hyt2}).
But before putting matters into perspective, it will be useful to briefly
review the history of weighted norm inequalities for the Hilbert transform.
For a signed measure $\nu $ on $\mathbb{R}$ define 
\begin{equation}
H\nu \left( x\right) \equiv \QTR{up}{p.v.}\int \frac{1}{x-y}\;\nu (dy)\,,
\label{e.H}
\end{equation}%
for an appropriate truncation of the kernel $\frac{1}{x-y}$ - see below. A 
\emph{weight} $\omega $ is a non-negative locally finite Borel measure.

The one weight inequality for the Hilbert transform is then%
\begin{equation*}
\left\Vert Hf\right\Vert _{L^{2}(\omega )}\lesssim \left\Vert f\right\Vert
_{L^{2}(\omega )},
\end{equation*}%
and was shown by Hunt, Muckenhoupt and Wheeden in \cite{HuMuWh} to be
equivalent to finiteness of the remarkable $A_{2}$ condition of Muckenhoupt:
namely that $d\omega =w\left( x\right) dx$ is absolutely continuous with
respect to Lebesgue measure and%
\begin{equation*}
\sup_{I}\frac{1}{\left\vert I\right\vert }\int_{I}w(x)dx\cdot \frac{1}{%
\left\vert I\right\vert }\int_{I}\frac{1}{w(x)}dx<\infty \,,
\end{equation*}%
which says that the Cauchy-Schwarz inequality%
\begin{equation*}
1=\left( \frac{1}{\left\vert I\right\vert }\int_{I}\sqrt{w(x)\frac{1}{w(x)}}%
dx\right) ^{2}\leq \frac{1}{\left\vert I\right\vert }\int_{I}w(x)dx\cdot 
\frac{1}{\left\vert I\right\vert }\int_{I}\frac{1}{w(x)}dx
\end{equation*}%
can be reversed up to a constant uniformly over intervals $I$.

For two weights $\omega ,\sigma $, we consider the two weight norm
inequality 
\begin{equation}
\left\Vert H(f\sigma )\right\Vert _{L^{2}(\omega )}\leq \mathfrak{N}%
\left\Vert f\right\Vert _{L^{2}(\sigma )}.\ \ \ \ \ \ \ \ \ \ \ \ \ \ \ \ \
\ \ \ \left( \mathfrak{N}\right)  \label{e.H<}
\end{equation}%
Note that when $\omega $ is absolutely continuous with density $w$, and both 
$w$ and $\frac{1}{w}$ are locally integrable, then the case $\sigma =\frac{1%
}{w}dx$ reduces to the one weight inequality above. A simple necessary
condition for (\ref{e.H<}) to hold is the analogous two weight $A_{2}$
condition%
\begin{equation*}
\sup_{I}\frac{1}{\left\vert I\right\vert }\int_{I}d\omega (x)\cdot \frac{1}{%
\left\vert I\right\vert }\int_{I}d\sigma (x)<\infty \,,
\end{equation*}%
taken uniformly over intervals $I$, where of course the weights need no
longer be absolutely continuous. As it turns out though, this two weight $%
A_{2}$ condition is no longer sufficient for the norm inequality (\ref{e.H<}%
), and F. Nazarov has shown that even the following necessary $\mathcal{A}%
_{2}$ condition `on steroids' of Nazarov, Treil and Volberg is not
sufficient:%
\begin{equation}
\sup_{I}\mathrm{P}(I,\omega )\cdot \mathrm{P}(I,\sigma )=\mathcal{A}%
_{2}<\infty ,\ \ \ \ \ \ \ \ \ \ \ \ \ \ \ \ \ \ \ \ \left( \mathcal{A}%
_{2}\right)  \label{A2}
\end{equation}%
where for an interval $I$ and measure $\omega $, the Poisson integral $%
\mathrm{P}(I,\omega )$ at $I$ is given by%
\begin{equation}
\mathrm{P}(I,\omega )\equiv \int_{\mathbb{R}}\frac{\lvert I\rvert }{(\lvert
I\rvert +\QTR{up}{dist}(x,I))^{2}}\;d\omega \left( x\right) .  \label{e.P}
\end{equation}%
See e.g. Theorem 2.1 in \cite{NiTr}.

We require in addition that the following testing conditions, also necessary
for the two weight inequality (\ref{e.H<}),%
\begin{equation}
\int_{I}\lvert H(\mathbf{1}_{I}\sigma )\rvert ^{2}\;\omega (dx)\leq 
\mathfrak{T}^{2}\left\vert I\right\vert _{\sigma }\,,\ \ \ \ \ \ \ \ \ \ \ \
\ \ \ \ \ \ \ \ \left( \mathfrak{T}\right)  \label{e.H1}
\end{equation}%
\begin{equation}
\int_{I}\lvert H(\mathbf{1}_{I}\omega )\rvert ^{2}\;\sigma (dx)\leq (%
\mathfrak{T}^{\ast })^{2}\left\vert I\right\vert _{\omega }\,,\ \ \ \ \ \ \
\ \ \ \ \ \ \ \ \ \ \ \ \ \left( \mathfrak{T}^{\ast }\right)  \label{e.H2}
\end{equation}%
hold uniformly over intervals $I$. Here, we are letting $\mathfrak{T}$ and $%
\mathfrak{T}^{\ast }$ denote the smallest constants for which these
inequalities are true uniformly over all intervals $I$, and we write $\sigma
(I)\equiv \int_{I}\sigma (dx)\equiv \left\vert I\right\vert _{\sigma }$. The
`NTV conjecture' of Nazarov, Treil and Volberg was that (\ref{A2}) and the
testing conditions (\ref{e.H1},\ref{e.H2}) are also sufficient for the norm
inequality (\ref{e.H<}). The following diagram illustrates the main
connections between the conditions considered here, but with arbitrary
singular integrals in place of the Hilbert transform.

\begin{equation*}
\fbox{$%
\begin{array}{ccc}
\ \ \ \ \ \ \ \ \ \ \ \ \ \ \ \ \ \ \ \ \ \ \ \ \ \ \ \ \ \ \ \text{{\LARGE %
Implications among various conditions}} &  &  \\ 
&  &  \\ 
\left. 
\begin{array}{ccc}
\left( \mathfrak{N}_{\substack{ \limfunc{norm}  \\ \limfunc{inequality}}}%
\right) & \Longrightarrow & \left( 
\begin{array}{c}
\mathcal{A}_{2} \\ 
\mathfrak{T}_{\substack{ \limfunc{testing}  \\ \limfunc{condition}}} \\ 
\mathfrak{T}_{\substack{ \limfunc{testing}  \\ \limfunc{condition}}}^{\ast }
\\ 
\mathcal{WBP}%
\end{array}%
\right) \\ 
&  &  \\ 
\left( \mathfrak{N}_{\substack{ \limfunc{norm}  \\ \limfunc{inequality}}}%
\right) & 
\begin{array}{c}
\not\Longrightarrow \\ 
\text{{\scriptsize (Theorem\ \ref{energy condition fails})}}%
\end{array}
& \left( 
\begin{array}{c}
\mathcal{E}_{\substack{ \limfunc{energy}  \\ \limfunc{condition}}} \\ 
\mathcal{E}_{\substack{ \limfunc{energy}  \\ \limfunc{condition}}}^{\ast }%
\end{array}%
\right)%
\end{array}%
\right\} & 
\begin{array}{c}
\Longrightarrow \\ 
\text{{\scriptsize \cite{SaShUr9}}}%
\end{array}
& \left( \mathfrak{N}_{\substack{ \limfunc{norm}  \\ \limfunc{inequality}}}%
\right) \\ 
\ \ \ \ \ \ \ \ \ \ \ \ \ \ \ \ \ \ \ \ \ \ \ \ \ \ \ \ \ \ 
\begin{array}{c}
\\ 
\ \ \ \ \  \Uparrow \\ 
\\ 
\begin{array}{ccc}
\left( 
\begin{array}{c}
\mathcal{ER}_{\substack{ \limfunc{energy}  \\ \limfunc{reversal}}} \\ 
\mathcal{ER}_{\substack{ \limfunc{energy}  \\ \limfunc{reversal}}}^{\ast }%
\end{array}%
\right) & \text{or} & \ \ \ \ \left( 
\begin{array}{c}
\mathcal{P}_{\substack{ \limfunc{pivotal}  \\ \limfunc{condition}}} \\ 
\mathcal{P}_{\substack{ \limfunc{pivotal}  \\ \limfunc{condition}}}^{\ast }%
\end{array}%
\right) \\ 
&  &  \\ 
\Uparrow &  &  \\ 
&  &  \\ 
\left( 
\begin{array}{c}
\text{{\scriptsize geometric\ conditions}} \\ 
(\text{{\scriptsize \cite{SaShUr8},\cite{SaShUr9}}} \\ 
\text{{\scriptsize \cite{LaWi},\cite{LaSaShUrWi})}} \\ 
\text{{\scriptsize or Theorem\ \ref{gradient elliptic}}}%
\end{array}%
\right) &  & 
\end{array}%
\end{array}
&  & 
\end{array}%
$}
\end{equation*}

The approach of Nazarov, Treil and Volberg to prove (\ref{e.H<}) in \cite%
{NTV3} involves the assumption of two additional side conditions, the \emph{%
Pivotal Conditions} given by 
\begin{equation}
\sum_{r=1}^{\infty }\left\vert I_{r}\right\vert _{\omega }\mathsf{P}\left(
I_{r},\mathbf{1}_{I_{0}}\sigma \right) ^{2}\leq \mathcal{P}^{2}\left\vert
I_{0}\right\vert _{\sigma },\ \ \ \ \ \ \ \ \ \ \ \ \ \ \ \ \ \ \ \ \left( 
\mathcal{P}\right)  \label{pivotalcondition}
\end{equation}%
and its dual in which the measures $\sigma $ and $\omega $ are interchanged,
and where the inequality is required to hold for all intervals $I_{0}$, and
decompositions $\{I_{r}\;:\;r\geq 1\}$ of $I_{0}$ into disjoint intervals $%
I_{r}\subsetneq I_{0}$. Here $\mathcal{P}$ and $\mathcal{P}^{\ast }$ denote
the best constants in (\ref{pivotalcondition}) and its dual respectively.

In the approach initiated in \cite{LaSaUr2}, the Pivotal Condition (\ref%
{pivotalcondition}) was replaced by certain weaker conditions of \emph{%
energy type}, as given in (\ref{en}) above.

\begin{definition}
\label{d.energy} For a weight $\omega $, and interval $I$, we set 
\begin{equation*}
\mathsf{E}\left( I,\omega \right) \equiv \left[ \mathbb{E}_{I}^{\omega (dx)}%
\left[ \mathbb{E}_{I}^{\omega (dx^{\prime })}\frac{x-x^{\prime }}{\lvert
I\rvert }\right] ^{2}\right] ^{1/2}.
\end{equation*}
\end{definition}

It is important to note that $\mathsf{E}(I,\omega )\leq 1$, and can be quite
small, if $\omega $ is highly concentrated inside the interval $I$; in
particular if $\omega \mathbf{1}_{I}$ is a point mass, then $\mathsf{E}%
(I,\omega )=0$. Note also that $\omega (I)\left\vert I\right\vert ^{2}%
\mathsf{E}(I,\omega )^{2}$ is the variance of the variable $x$, and that we
have the identity%
\begin{equation*}
\mathsf{E}\left( I,\omega \right) ^{2}=\frac{1}{2}\mathbb{E}_{I}^{\omega
(dx)}\mathbb{E}_{I}^{\omega (dx^{\prime })}\frac{\left( x-x^{\prime }\right)
^{2}}{\left\vert I\right\vert ^{2}}\,.
\end{equation*}%
The following \emph{Energy Condition} and its dual condition were shown in 
\cite{LaSaUr2} to be necessary for the two weight norm inequality for the
Hilbert transform:%
\begin{equation}
\sum_{r\geq 1}\left\vert I_{r}\right\vert _{\omega }\mathsf{E}(I_{r},\omega
)^{2}\mathrm{P}(I_{r},\sigma \mathbf{1}_{I_{0}})^{2}\leq \mathcal{E}%
^{2}\left\vert I_{0}\right\vert _{\sigma },\ \ \ \ \ \ \ \ \ \ \ \ \ \ \ \ \
\ \ \ \left( \mathcal{E}\right)  \label{energy condition}
\end{equation}%
where the sum is taken over all decompositions $I_{0}=\bigcup_{r=1}^{\infty
}I_{r}$ of the interval $I_{0}$ into pairwise disjoint intervals $\left\{
I_{r}\right\} _{r\geq 1}$. Here $\mathcal{E}$ and $\mathcal{E}^{\ast }$
denote the best constants in (\ref{energy condition}) and its dual
respectively. As $\mathsf{E}(I,\omega )\leq 1$, the Energy Condition is
weaker than the Pivotal Condition. Indeed, in \cite{LaSaUr2} it was proved
that both the pivotal condition and some hybrid conditions, intermediate
between the pivotal and the energy condition, were sufficient for the norm
inequality for the Hilbert transform, but not necessary. The energy
condition, weakest of all of these conditions, was on the other hand proved
necessary. As mentioned earlier, the maneuvering room present in the pivotal
and hybrid conditions disappears at the level of the energy condition.

Later, in the two part paper \cite{LaSaShUr3}-\cite{Lac} by the authors and
Lacey, the NTV conjecture was proved, namely that the two weight inequality
for the Hilbert transform holds if and only if the $\mathcal{A}_{2}$ and
testing conditions hold - a restriction that the measures have no common
point masses was subsequently removed by Hyt\"{o}nen \cite{Hyt2}. A key role
was played in these results by the energy conditions, and in fact the energy
conditions have continued to play a crucial role in higher dimensions.

For example, they arise as side conditions for the $T1$ theorem in our
papers \cite{SaShUr6} and \cite{SaShUr7}, where we raised the question of
whether or not the energy conditions are necessary for \emph{any} elliptic
operator in higher dimensions. They also play a critical role in the side
conditions of uniformly full dimension of Lacey and Wick \cite{LaWi}\ since
those conditions, like the doubling conditions in \cite{NTV1}, and the
Energy Hypothesis in \cite{LaSaUr2}, imply the energy conditions. The energy
conditions were also shown to be necessary, and used crucially, in the cases
of the Hilbert transform \cite{LaSaShUr3}-\cite{Lac} already mentioned, in
the Cauchy operator with one measure on a line or circle \cite{LaSaShUrWi},
and in the generalization to compact $C^{1,\delta }$ curves in higher
dimension \cite{SaShUr8}.

In summary, the necessity of the energy condition was a crucial element in
each of these proofs of the $T1$ theorem for two weights, and in fact in all
proofs of a two weight $T1$ theorem to date. However, in every one of these
cases, the energy conditions were derived from the very strong property of 
\emph{energy reversal}. Recall from \cite{SaShUr4} that a vector $\mathbf{T}%
^{\alpha }=\left\{ T_{\ell }^{\alpha }\right\} _{\ell =1}^{N}$ of $\alpha $%
-fractional transforms in Euclidean space $\mathbb{R}^{n}$ satisfies a \emph{%
strong} reversal of $\omega $-energy on a cube $J$ if there is a positive
constant $C_{0}$ such that for all $\gamma \geq 2$ sufficiently large and
for all positive measures $\mu $ supported outside $\gamma J$, we have the
inequality%
\begin{equation}
\mathsf{E}\left( J,\omega \right) ^{2}\mathrm{P}^{\alpha }\left( J,\mu
\right) ^{2}\leq C_{0}\ \mathbb{E}_{J}^{d\omega \left( x\right) }\mathbb{E}%
_{J}^{d\omega \left( z\right) }\left\vert \mathbf{T}^{\alpha }\mu \left(
x\right) -\mathbf{T}^{\alpha }\mu \left( z\right) \right\vert ^{2}.
\label{will fail}
\end{equation}%
For the Hilbert transform in dimension $n=1$, (\ref{will fail}) is an
immediate consequence of of the equivalence $\left\vert H\mu \left( x\right)
-H\mu \left( x^{\prime }\right) \right\vert ^{2}\approx \left\vert \frac{%
x-x^{\prime }}{\left\vert J\right\vert }\right\vert ^{2}\mathrm{P}\left(
J,\mu \right) ^{2}$ - simply take $\omega $-expectations over $J$ in both $x$
and $x^{\prime }$. This property is tied in an essential way to the
ellipticity of the gradient of the kernel of the operator, and\ to either
the `fullness' of doubling and uniformly full dimension measures, or to the
`one-dimensional nature' of one of the measures. Subsequently, Lacey
observed the failure of energy reversal for the Cauchy operator in the
plane, and shortly thereafter in \cite{SaShUr4}, we established that energy
reversal fails spectacularly for classical fractional singular integrals in
higher dimension. However, this left open the crucial question of whether or
not the energy conditions themselves were necessary for (\ref{e.H<}) to
hold. Prior to this paper, \emph{no counterexample to the necessity of the
crucial \textbf{energy conditions} has been found for any elliptic operator
in any dimension}.

Now we can proceed to put matters into perspective. Recall that after
Nazarov's proof that the two-tailed Muckenhoupt $\mathcal{A}_{2}$ condition
alone was not sufficient to characterize the two weight norm inequality for
the Hilbert transform, all subsequent attempts to characterize a two weight
norm inequality for a \emph{general} class of singular integrals have failed
in that they all required a side condition. These side conditions have been
proposed by Nazarov, Treil, Volberg, Lacey, and the authors in various
papers since 2005, becoming weaker as time went on. But all of them have
failed to be necessary for a general two weight norm inequality, until the
weakest condition of them all, the one which arises implicitly in all proofs
of positive results to date, was proposed initially in \cite{LaSaUr2}, and
implemented systematically in higher dimensions beginning in \cite{SaShUr}
and \cite{SaShUr7}: namely the \emph{energy condition }(\ref{energy
condition}) on the pair of weights $\sigma $ and $\omega $, which is
independent of the singular integral operator $T$. This energy condition
turned out to be necessary for the case of the Hilbert transform, not only
leading to the solution of the $T1$ theorem for that operator, but also
raising expectations (which crystalized into conjectures) that it might be
necessary for \emph{all} elliptic singular integral operators. Our main
result, Theorem 1 below, culminates this thread of investigation by dashing
such expectations and proving that the energy condition is indeed not
necessary for elliptic operators in general.

Consequently, some other hypothesis is needed in order to use the only known
method of proof for two weight inequalities for general singular integrals
(our results here point to gradient ellipticity), without which either the $%
T1$ theorem fails, or its proof requires a completely new idea. The
counterexample in this paper should help inform any subsequent
investigations in these directions. A further comment is perhaps in order
here. Borderline results, such as this one, require a delicate understanding
of the problem involved. Indeed, the maneuvering room present in partial
results disappears in these problems posed at a level of critical behavior,
and overcoming the technical hurdles required to push the partial results to
their limit constitutes a significant tour de force. In the unstable
equilibrium of the current paper, where certain conditions barely hold and
others barely fail, delicate and substantial modifications are needed of the
example in \cite{LaSaUr2} (which started the series of counterexamples
mentioned above), and we discuss these thoroughly below.

Let us now briefly describe in a nutshell for experts the content of this
manuscript. In our main result, Theorem \ref{energy condition fails} below,
we provide a counterexample to the necessity of the energy condition for the
boundedness of a small, but elliptic, perturbation of the Hilbert transform.
This answers in the negative Problem 7 in \cite{SaShUr7} and Conjecture 3 in
both \cite{SaShUr8} and \cite{SaShUr9}, which if true would have established
the celebrated $T1$ theorem for such operators - namely that boundedness of $%
T$ is equivalent to a Muckenhoupt condition and testing the operator and its
dual on indicators of intervals. In fact, the only known proofs of two
weight $T1$ theorems to date all rely crucially on the necessity of the
energy condition. On the other hand, in a relatively simple adaptation of an
existing argument in \cite{LaSaUr2}, together with our main theorem in \cite%
{SaShUr10} (see also \cite{SaShUr9} or \cite{SaShUr6} for a more leisurely
exposition), we show in Theorem \ref{gradient elliptic} below, that for a
class of singular integral operators on the line that narrowly avoid our
counterexample (i.e. their kernels are both elliptic and gradient elliptic),
the $T1$ theorem does indeed hold for these operators.

\begin{conjecture}
The energy conditions (see \cite{SaShUr7}) are necessary for boundedness of
a vector of standard singular integrals in higher dimensions provided the
vector singular integral is both strongly elliptic (see \cite{SaShUr7}) and
strongly gradient elliptic. If this conjecture is true, the $T1$ theorem
would then follow for such operators by the main theorem in \cite{SaShUr9}
or \cite{SaShUr10}.
\end{conjecture}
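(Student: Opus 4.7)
The plan is to attack the conjecture in two independent halves. The bulk of the work is proving the implication ``strong ellipticity plus strong gradient ellipticity $\Longrightarrow$ energy conditions,'' which one would try to establish via a higher-dimensional analog of energy reversal. The second half, ``energy conditions $\Longrightarrow$ $T1$ theorem,'' is by now essentially bookkeeping: it is the statement of the main theorem of \cite{SaShUr9} (or \cite{SaShUr10}), whose hypotheses take the relevant $\mathcal{A}_2^\alpha$, testing, weak boundedness, and energy conditions as inputs and deliver the norm inequality.

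For the first half, the model is the one-dimensional Hilbert transform, where the identity $\lvert H\mu(x)-H\mu(x')\rvert \approx \bigl\lvert\tfrac{x-x'}{|J|}\bigr\rvert \mathrm{P}(J,\mu)$ for $x,x'\in J$, $\mu$ supported outside $\gamma J$, is an immediate consequence of the mean value theorem applied to $K(t) = 1/t$ together with the pointwise bound $|K'(t)|\approx 1/t^2$. Taking $\omega$-expectations in $x$ and $x'$ yields (\ref{will fail}). The strategy in higher dimensions is analogous: write
\begin{equation*}
T_\ell^\alpha \mu(x) - T_\ell^\alpha \mu(z) = \int_{\mathbb{R}^n}\bigl(K_\ell^\alpha(x,y)-K_\ell^\alpha(z,y)\bigr)\,d\mu(y),
\end{equation*}
and use gradient ellipticity of the vector kernel to lower bound the integrand in a direction-selective way; for each pair $x,z\in J$ and each $y$ far from $J$, at least one component $\ell$ should satisfy $\lvert K_\ell^\alpha(x,y)-K_\ell^\alpha(z,y)\rvert \gtrsim \lvert x-z\rvert/\lvert x-y\rvert^{n+1-\alpha}$ in the favored direction. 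Summing the $L^2(\omega\otimes\omega)$ norms of these differences across the vector components, and integrating in $y$ against $d\mu$, should produce the desired lower bound $\mathsf{E}(J,\omega)^2 \mathrm{P}^\alpha(J,\mu)^2$. Once this variant of energy reversal is in hand, the Energy Condition $(\mathcal{E})$ follows by the standard decomposition argument of \cite{LaSaUr2}: for any partition $I_0=\bigsqcup_r I_r$ one applies energy reversal on each $I_r$ with $\mu=\mathbf{1}_{I_0\setminus\gamma I_r}\sigma$, sums, and closes the resulting inequality against the testing constant $\mathfrak{T}$ and the $\mathcal{A}_2^\alpha$ constant.

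The main obstacle, and the reason this is merely a conjecture, is synthesizing the pointwise non-degeneracy of the vector gradient into the bilinear $L^2(\omega)$ inequality uniformly in $J$ and $\mu$. Lacey's observation that energy reversal \emph{fails} for the Cauchy operator in the plane, and the sharper failures in \cite{SaShUr4} for classical fractional singular integrals in higher dimension, show that a scalar gradient-elliptic kernel is not enough---the direction connecting $x$ and $z$ can be orthogonal to the gradient direction of $K_\ell^\alpha(\cdot,y)$ for many $y$, killing the lower bound. Strong gradient ellipticity of a vector $\mathbf{T}^\alpha$ is designed to rule out this degeneracy by forcing the matrix of kernel gradients to be uniformly non-singular in every direction, but converting this into a quantitative averaged lower bound will require a careful covering of $J$ tailored to $\mu$, together with a Poisson-tail/localization error that must be absorbed into the $\mathcal{A}_2^\alpha$ term (this is where the threshold $\gamma\geq 2$ will have to be chosen large).

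A secondary obstruction---minor but real---is that in higher dimensions the role played in one dimension by the scalar quantity $\tfrac{x-x'}{|J|}$ is now played by a vector, and the correct higher-dimensional analog of $\mathsf{E}(J,\omega)^2$ is a quadratic form in covariances. One must verify that the direction-selective lower bound above actually recovers the full trace of this covariance matrix, rather than only one eigen-direction; this is precisely the point at which \emph{strong} gradient ellipticity (as opposed to a one-directional variant) is indispensable. Once both obstructions are overcome and the Energy Conditions are established, the $T1$ theorem follows by direct invocation of \cite{SaShUr9} or \cite{SaShUr10}, completing the conjecture.
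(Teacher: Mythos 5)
The statement you are addressing is stated in the paper as a \emph{Conjecture}: the authors offer no proof of it, and indeed the entire point of their Theorem \ref{energy condition fails} is that the corresponding necessity claim fails once gradient ellipticity is dropped, while the higher-dimensional gradient-elliptic case is left open. So there is no proof in the paper to compare yours against, and what you have written is not a proof either -- it is a research plan whose decisive step is explicitly deferred. The gap is exactly where you locate it: you never establish the averaged, bilinear energy-reversal inequality (\ref{will fail}) (or any substitute for it) from strong gradient ellipticity in dimension $n\geq 2$. Saying that ``at least one component $\ell$ should satisfy'' a pointwise lower bound in a favored direction, and that summing over components ``should produce'' the lower bound $\mathsf{E}(J,\omega)^{2}\,\mathrm{P}^{\alpha}(J,\mu)^{2}$, is precisely the passage that fails for the Cauchy operator in the plane and for the classical fractional Riesz transforms in \cite{SaShUr4}: the pointwise non-degeneracy of the kernel gradients does not survive the $\omega\otimes\omega$ averaging, because the set of $y$ for which a given component is favored depends on the pair $(x,z)$, and the needed covering/direction-selection argument is not supplied. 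Until that step is carried out (or circumvented), the first and essential half of the conjecture remains unproved.

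For the record, your outline is consistent with the one case that \emph{is} known: in dimension one the paper's Theorem \ref{gradient elliptic} is proved in the final section by exactly the mechanism you describe -- the fundamental theorem of calculus plus gradient ellipticity gives $T\mu(x)-T\mu(z)\gtrsim (x-z)\,\mathsf{P}(J,\mu)/\lvert J\rvert$ for $\mu$ supported outside $J$, squaring and integrating in $d\omega(x)\,d\omega(z)$ yields energy reversal, and the energy constants are then controlled by $A_{2}^{\limfunc{punct}}$ and the testing constant before invoking \cite{SaShUr10}. Your second half (energy conditions imply the $T1$ theorem) is likewise a correct invocation of \cite{SaShUr9} or \cite{SaShUr10} and requires no new work. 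But neither of these observations closes the higher-dimensional gap, so the conjecture stands as a conjecture.
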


\subsection{Statements of theorems}

The main purpose of this paper then is to give such a counterexample. For
this we first we recall the precise meaning of the two weight norm
inequality for a standard singular integral $T$ on the real line. Define a
standard CZ kernel $K(x,y)$ to be a real-valued function defined on $\mathbb{%
R}\times \mathbb{R}$ satisfying the following fractional size and smoothness
conditions of order $1+\delta $ for some $\delta >0$: For $x\neq y$,%
\begin{eqnarray}
\left\vert K\left( x,y\right) \right\vert &\leq &C_{CZ}\left\vert
x-y\right\vert ^{-1}\text{ and }\left\vert \nabla K\left( x,y\right)
\right\vert \leq C_{CZ}\left\vert x-y\right\vert ^{-2},
\label{sizeandsmoothness'} \\
\left\vert \nabla K\left( x,y\right) -\nabla K\left( x^{\prime },y\right)
\right\vert &\leq &C_{CZ}\left( \frac{\left\vert x-x^{\prime }\right\vert }{%
\left\vert x-y\right\vert }\right) ^{\delta }\left\vert x-y\right\vert
^{-2},\ \ \ \ \ \frac{\left\vert x-x^{\prime }\right\vert }{\left\vert
x-y\right\vert }\leq \frac{1}{2},  \notag
\end{eqnarray}%
and the last inequality also holds for the adjoint kernel in which $x$ and $%
y $ are interchanged. We note that a more general definition of kernel has
only order of smoothness $\delta >0$, rather than $1+\delta $, but the use
of the Monotonicity and Energy Lemmas in arguments below involves first
order Taylor approximations to the kernel functions $K\left( \cdot ,y\right) 
$. In order to give a precise definition of the two weight norm inequality%
\begin{equation}
\left\Vert T_{\sigma }f\right\Vert _{L^{2}\left( \omega \right) }\leq 
\mathfrak{N}_{T_{\sigma }}\left\Vert f\right\Vert _{L^{2}\left( \sigma
\right) },\ \ \ \ \ f\in L^{2}\left( \sigma \right) ,  \label{two weight'}
\end{equation}%
we introduce a family $\left\{ \eta _{\delta ,R}\right\} _{0<\delta
<R<\infty }$ of nonnegative functions on $\left[ 0,\infty \right) $ so that
the truncated kernels $K_{\delta ,R}\left( x,y\right) =\eta _{\delta
,R}\left( \left\vert x-y\right\vert \right) K\left( x,y\right) $ are bounded
with compact support for fixed $x$ or $y$. Then the truncated operators 
\begin{equation*}
T_{\sigma ,\delta ,R}f\left( x\right) \equiv \int_{\mathbb{R}}K_{\delta
,R}\left( x,y\right) f\left( y\right) d\sigma \left( y\right) ,\ \ \ \ \
x\in \mathbb{R},
\end{equation*}%
are pointwise well-defined, and we will refer to the pair $\left( K,\left\{
\eta _{\delta ,R}\right\} _{0<\delta <R<\infty }\right) $ as a singular
integral operator, which we typically denote by $T$, suppressing the
dependence on the truncations.

\begin{definition}
We say that a singular integral operator $T=\left( K,\left\{ \eta _{\delta
,R}\right\} _{0<\delta <R<\infty }\right) $ satisfies the norm inequality (%
\ref{two weight'}) provided%
\begin{equation*}
\left\Vert T_{\sigma ,\delta ,R}f\right\Vert _{L^{2}\left( \omega \right)
}\leq \mathfrak{N}_{T_{\sigma }}\left\Vert f\right\Vert _{L^{2}\left( \sigma
\right) },\ \ \ \ \ f\in L^{2}\left( \sigma \right) ,0<\delta <R<\infty .
\end{equation*}
\end{definition}

It turns out that, in the presence of Muckenhoupt conditions, the norm
inequality (\ref{two weight'}) is essentially independent of the choice of
truncations used, and this is explained in some detail in \cite{SaShUr10},
see also \cite{LaSaShUr3}. Thus, as in \cite{SaShUr10}, we are free to use
the tangent line truncations described there throughout the proofs of our
results.

\begin{theorem}
\label{energy condition fails}There exists a weight pair $\left( \widehat{%
\sigma },\widehat{\omega }\right) $ and an elliptic singular integral $%
H_{\flat }$ on the real line $\mathbb{R}$ such that $H_{\flat }$ satisfies
the two weight norm inequality 
\begin{equation*}
\left\Vert H_{\flat }(f\widehat{\sigma })\right\Vert _{L^{2}(\widehat{\omega 
})}\lesssim \left\Vert f\right\Vert _{L^{2}(\widehat{\sigma })}\ ,
\end{equation*}%
yet the weight pair $\left( \widehat{\sigma },\widehat{\omega }\right) $ 
\textbf{fails} to satisfy the backward Energy Condition%
\begin{equation}
\sum_{r\geq 1}\left\vert I_{r}\right\vert _{\widehat{\sigma }}\mathsf{E}%
(I_{r},\widehat{\sigma })^{2}\mathrm{P}(I_{r},\widehat{\omega }\mathbf{1}%
_{I_{0}})^{2}\leq \left( \mathcal{E}^{\ast }\right) ^{2}\left\vert
I_{0}\right\vert _{\widehat{\omega }},\ \ \ \ \ \ \ \ \ \ \ \ \ \ \ \ \ \ \
\ \left( \mathcal{E}^{\ast }\right)  \label{dual energy condition}
\end{equation}%
for any $\mathcal{E}^{\ast }<\infty $, where the sum is taken over all
decompositions $I_{0}=\bigcup_{r=1}^{\infty }I_{r}$ of the interval $I_{0}$
into pairwise disjoint intervals $\left\{ I_{r}\right\} _{r\geq 1}$.
\end{theorem}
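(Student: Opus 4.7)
The plan is to produce $(\widehat{\sigma},\widehat{\omega})$ and $K_\flat$ by modifying the self-similar construction in \cite{LaSaUr2} that first separated the pivotal from the energy condition. I would build a Cantor-type nested family of intervals $\{I_r^{(k)}\}_{r,k}$ organized by generations $k$, place $\widehat{\sigma}$ as two equal point masses $\{x_r^{(k),+},x_r^{(k),-}\}$ on each $I_r^{(k)}$ symmetric about its midpoint $c_r^{(k)}$ and separated by a distance $d_k\ll|I_r^{(k)}|$, and place $\widehat{\omega}$ as weighted point masses at or near the midpoints $c_r^{(k)}$ of a selected sub-family. The generation-dependent atomic weights are tuned geometrically, as in \cite{LaSaUr2}, so that the two-weight $\mathcal{A}_2$ condition holds, and so that $\mathsf{E}(I_r^{(k)},\widehat{\sigma})$ is bounded below by a fixed positive constant for every intermediate interval (since $\widehat{\sigma}\mathbf{1}_{I_r^{(k)}}$ sees two equal atoms straddling the midpoint). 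This replaces the concentrated-measure ingredient that saved the energy condition in \cite{LaSaUr2}.

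Next I would construct an odd, $C^{1,\delta}$-smooth convolution kernel $K_\flat(x)$ that is elliptic with $|K_\flat(x)|\gtrsim 1/|x|$ and satisfies $|K_\flat'(x)|\lesssim 1/|x|^2$ where nonzero, but has flat spots $K_\flat'\equiv 0$ on a sequence of intervals $J_k=[a_k,b_k]\subset(0,\infty)$ chosen so that for every pair $\{x_r^{(k),\pm}\}$ at generation $k$ and every $\widehat{\omega}$-atom $y$ lying in $I_r^{(k)}$ but outside the descendant generation $k+1$, both displacements $x_r^{(k),+}-y$ and $x_r^{(k),-}-y$ land inside a single flat spot $J_m$. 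Between plateaus, $K_\flat$ descends in jumps sized to preserve the ellipticity envelope $c/|x|\leq|K_\flat(x)|\leq C/|x|$. The resulting \emph{flat-spot cancellation}
\begin{equation*}
K_\flat\bigl(x_r^{(k),+}-y\bigr)-K_\flat\bigl(x_r^{(k),-}-y\bigr)=0
\end{equation*}
is the substitute for the gradient-ellipticity that is being deliberately destroyed: $H_\flat(\widehat{\omega}\mathbf{1}_{I_r^{(k)}})$ sees only the deep-descendant $\widehat{\omega}$-atoms when evaluated on the pair in $\widehat{\sigma}$.

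The failure of $\mathcal{E}^\ast$ then proceeds in the spirit of the pivotal failure in \cite{LaSaUr2}: decompose a large $I_0$ into the first-generation pair-intervals $\{I_r\}$, use $\mathsf{E}(I_r,\widehat{\sigma})^2\gtrsim 1$, and exploit the geometric scaling of the masses to drive $\sum_r|I_r|_{\widehat{\sigma}}\,\mathrm{P}(I_r,\widehat{\omega}\mathbf{1}_{I_0})^2/|I_0|_{\widehat{\omega}}$ to infinity. For the norm inequality I would verify $\mathcal{A}_2$ and both testing conditions directly on this explicit pair; the backward testing constant is the most delicate, since it is precisely this estimate that would force the energy condition under gradient ellipticity. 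Here the flat-spot cancellation rescues it: on each pair in $\widehat{\sigma}\cap I$, the function $H_\flat(\mathbf{1}_I\widehat{\omega})$ splits as a common constant across the two equal atoms plus a controlled remainder supported on deep descendants, and one sums the resulting geometric series. The main obstacle is the tight book-keeping across the three scales $d_k$, $|J_k|$, and $|I_r^{(k)}|$, together with the atomic weights, so that simultaneously (i) $K_\flat$ is a bona fide standard Calderón-Zygmund kernel, (ii) $\mathcal{A}_2$ and the forward testing condition survive the replacement of $1/x$ by $K_\flat$, and (iii) the backward testing inequality remains uniformly bounded while $\mathcal{E}^\ast$ diverges; calibrating these competing constraints, and adapting the delicate estimates from \cite{LaSaUr2} without the crutch of the energy condition being available, is where the real work lies.
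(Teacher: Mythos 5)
There are two genuine gaps, and the first is fatal as written. You propose to obtain the norm inequality by ``verifying $\mathcal{A}_2$ and both testing conditions directly,'' but no theorem converts $\mathcal{A}_2$ plus testing into boundedness for a merely elliptic (non--gradient-elliptic) Calder\'on--Zygmund operator: every known two weight $T1$ theorem requires the energy conditions as side hypotheses, and your pair is engineered precisely so that the backward energy condition fails, so no $T1$ theorem applies to it. The paper circumvents this with a transfer trick that your proposal lacks: it introduces an auxiliary measure $\widehat{\dot{\sigma}}$ obtained by collapsing each piece of $\widehat{\sigma}$ to a single point mass; for $\left( \widehat{\dot{\sigma}},\widehat{\omega }\right) $ \emph{both} energy conditions hold (individual atoms have zero energy), so the $T1$ theorem with energy side conditions of \cite{SaShUr7} yields the norm inequality for that pair; and the dual norm inequalities for the two pairs are then literally \emph{equal}, because the flat spots force $H_{\flat }\left( g\widehat{\omega }\right) $ to be constant on the support of $\widehat{\sigma }$. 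Your two-atom design could in principle support the same transfer, but you would need to say so and to arrange the flat spots so that $H_{\flat }\left( \mathbf{1}_{\left\{ y\right\} }\widehat{\omega }\right) $ is constant on each atom pair together with its collapse point, for \emph{every} $y$ in the support of $\widehat{\omega }$, not only for the $y$'s inside $I_{r}^{(k)}$ outside generation $k+1$.

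The second gap is that your ``flat-spot cancellation'' $K_{\flat }\left( x_{r}^{(k),+}-y\right) =K_{\flat }\left( x_{r}^{(k),-}-y\right) $ only makes $H_{\flat }\left( \mathbf{1}_{I}\widehat{\omega }\right) $ take equal values at the two atoms of a pair; it does nothing to control the size of that common value, which is what backward testing actually requires. Generically $H_{\flat }\left( \mathbf{1}_{I}\omega \right) $ at a generation-$k$ point is of size $\mathrm{P}\left( I_{j}^{k},\omega \right) \approx \left\vert I_{j}^{k}\right\vert _{\omega }/\left\vert I_{j}^{k}\right\vert $, and with the $\mathcal{A}_2$-normalized masses $s_{j}^{k}\approx \left\vert I_{j}^{k}\right\vert ^{2}/\left\vert I_{j}^{k}\right\vert _{\omega }$ one gets $\sum_{j,k}\mathrm{P}\left( I_{j}^{k},\omega \right) ^{2}s_{j}^{k}\approx \sum_{k}\sum_{j}\left\vert I_{j}^{k}\right\vert _{\omega }=\sum_{k}1=\infty $, so backward testing fails. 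What saves the paper is that $\widehat{\omega }$ is not the Cantor measure but an inductive redistribution of it (child masses reweighted by $\frac{1\pm 1/N}{2}$), constructed exactly so that $H_{\flat }\widehat{\omega }$ \emph{vanishes} on the support of $\widehat{\sigma }$; then $H_{\flat }\left( \mathbf{1}_{I}\widehat{\omega }\right) =-H_{\flat }\left( \mathbf{1}_{I^{c}}\widehat{\omega }\right) $ there, which is bounded by the Poisson integral at the scale of the testing interval $I$ rather than at the scale of the point, and the sum closes. Your proposal has no analogue of this redistribution; moreover the redistribution is what makes the forward testing condition delicate (it destroys the manifest self-similarity exploited in \cite{LaSaUr2}, and one must uncover hidden replication formulas to restore it), a difficulty that ``verify directly'' does not engage. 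Finally, a quantitative slip: two equal atoms separated by $d_{k}\ll \left\vert I_{r}^{(k)}\right\vert $ give $\mathsf{E}\left( I_{r}^{(k)},\widehat{\sigma }\right) \approx d_{k}/\left\vert I_{r}^{(k)}\right\vert \ll 1$, not $\gtrsim 1$; to make the energy sum diverge you must either take $d_{k}$ a fixed proportion of $\left\vert I_{r}^{(k)}\right\vert $ or run the decomposition over small intervals tightly containing the pairs, as the paper does with the gaps $G_{j}^{k}$.
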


To prove this theorem, we build on the example from \cite{LaSaUr2} that was
used to show the failure of necessity of the pivotal condition for the
Hilbert transform. However, there is a strong connection between testing
conditions and their corresponding energy conditions, and we must work hard
to disengage this connection while simultaneously retaining the norm
inequality. All of this requires substantial modification of the example in 
\cite{LaSaUr2}, and involves delicate symmetries of redistributed Cantor
measures. Moreover, it is essential to perturb the Hilbert transform so that
it is no longer gradient elliptic. We now describe our strategy in more
detail:

\begin{itemize}
\item As in \cite{LaSaUr2}, we start with one of the usual Cantor measures $%
\omega $ on $\left[ 0,1\right] $, and a measure $\dot{\sigma}$ that consists
of an infinite number of point masses centered in the gaps of the Cantor
measure $\omega $, and that is chosen so that the $A_{2}$ condition holds
for the pair $\left( \dot{\sigma},\omega \right) $.

\item Now we replace the point masses in $\dot{\sigma}$ with averages over
small intervals $L$ to get a measure $\sigma $ so that the $A_{2}$ condition
holds for $\left( \sigma ,\omega \right) $, but the backward energy
condition \textbf{fails} since it becomes the backward pivotal condition,
which was essentially shown to fail in \cite{LaSaUr2}. This depends
crucially on the fact that the energy of a point mass vanishes, but not that
of an average over an interval.

\item However, the backward testing condition for the pair $\left( \sigma
,\omega \right) $ then fails due to the strong connection between testing
and energy conditions. In order to obtain the backward testing condition for
the pair $\left( \sigma ,\omega \right) $, it would suffice to have our
singular integral $H_{\flat }$ applied to $\omega $ \textbf{vanish} on the
intervals $L$, and in particular this requires the kernel $K_{\flat }$ of
our singular integral to have appropriately located flat spots.

\item In order to achieve $H_{\flat }\omega =0$ on the intervals $L$, it is
thus necessary to inductively redistribute the Cantor measure $\omega $ into
a new measure $\widehat{\omega }$.

\item To preserve the $A_{2}$ condition we must reweight the interval masses
in $\sigma $ to get a new measure $\widehat{\sigma }$. It turns out that the
backward testing condition continues to hold for the new pair $\left( 
\widehat{\sigma },\widehat{\omega }\right) $ and that the backward energy
condition continues to fail for the new pair $\left( \widehat{\sigma },%
\widehat{\omega }\right) $.

\item However, the forward testing condition is now in doubt because the
argument in \cite{LaSaUr2} for the analogous inequality made strong use of
the self-similarity of both measures involved. The redistributed measures $%
\widehat{\omega }$ and $\widehat{\sigma }$ seem at first sight to have lost
all trace of self-similarity. Surprisingly, there are hidden symmetries in
the construction of $\widehat{\omega }$ that lead to suitable replication
formulas for both $\widehat{\omega }$ and $\widehat{\sigma }$, and then a
delicate argument shows that the forward testing condition does indeed hold
for the pair $\left( \widehat{\sigma },\widehat{\omega }\right) $. This step
represents the main challenge overcome in this paper.

\item Finally, we wish to prove that the norm inequality holds for the
weight pair $\left( \widehat{\sigma },\widehat{\omega }\right) $, and the
only known methods for this to date involve having both energy conditions
for the weight pair under consideration. Since the backward energy condition
fails for this weight pair, we must resort to a trick that exploits the flat
spots of our kernel. The first half of the trick is to notice that
everything we have done for the weight pair $\left( \widehat{\sigma },%
\widehat{\omega }\right) $ can also be done for the weight pair $\left( 
\widehat{\dot{\sigma}},\widehat{\omega }\right) $ in which $\widehat{\dot{%
\sigma}}$ is the corresponding reweighting of the original measure $\dot{%
\sigma}$, but with one exception: the backward energy condition now \textbf{%
holds} because $\widehat{\dot{\sigma}}$ consists of point masses instead of
intervals, and because the energy of individual point masses vanishes. The
second half of the trick is then to observe that the dual norm inequalities
for the weight pairs $\left( \widehat{\sigma },\widehat{\omega }\right) $
and $\left( \widehat{\dot{\sigma}},\widehat{\omega }\right) $ are \textbf{%
equivalent} due to the flat spots in the kernel! Thus the norm inequality
for $H_{\flat }$ holds for the weight pair $\left( \widehat{\sigma },%
\widehat{\omega }\right) $, but the backward energy condition fails for $%
\left( \widehat{\sigma },\widehat{\omega }\right) $.
\end{itemize}

As mentioned earlier, provided we narrowly avoid the operator $H_{\flat }$
constructed in the proof of Theorem \ref{energy condition fails}, the $T1$
theorem will hold. To state this result for more general measures $\omega $
and $\sigma $ we need the more refined Muckenhoupt conditions that are
adapted to the case of locally finite positive Borel measures $\omega $ and $%
\sigma $ that may have common point masses. Define \emph{fraktur} $A_{2}$ to
be the sum of the four $A_{2}^{\alpha }$ conditions,%
\begin{equation*}
\mathfrak{A}_{2}=\mathcal{A}_{2}+\mathcal{A}_{2}^{\ast }+A_{2}^{\limfunc{%
punct}}+A_{2}^{\ast ,\limfunc{punct}},
\end{equation*}%
where $\mathcal{A}_{2}$ and $\mathcal{A}_{2}^{\ast }$ are the one-tailed
Muckenhoupt conditions with \emph{holes}, and $A_{2}^{\limfunc{punct}}$ and $%
A_{2}^{\ast ,\limfunc{punct}}$ are the \emph{punctured} Muckenhoupt
conditions. All of these Muckenhoupt conditions avoid taking products of
integrals of $\omega $ and $\sigma $ over common point masses, and we refer
the reader to \cite{SaShUr9} or \cite{SaShUr10} for the somewhat technical
definitions of these Muckenhoupt conditions adapted to measures having
common point masses.

We say that the kernel $K$ of a standard singular integral $T$ on the real
line is \emph{gradient elliptic} if%
\begin{equation*}
\frac{\partial }{\partial x}K\left( x,y\right) ,-\frac{\partial }{\partial y}%
K\left( x,y\right) \geq \frac{c}{\left( x-y\right) ^{2}},\ \ \ \ \ x,y\in 
\mathbb{R}.
\end{equation*}%
Note that the Hilbert transform kernel $K\left( x,y\right) =\frac{1}{y-x}$
satisfies $\frac{\partial }{\partial x}K\left( x,y\right) =-\frac{\partial }{%
\partial y}K\left( x,y\right) =\frac{1}{\left( x-y\right) ^{2}}$.

\begin{theorem}
\label{gradient elliptic}Suppose that the kernel $K$ of a standard singular
integral $T$ on the real line is elliptic and gradient elliptic. Then $%
T:L^{2}\left( \sigma \right) \longrightarrow L^{2}\left( \omega \right) $ 
\emph{if and only if} the four Muckenhoupt conditions hold, i.e. $\mathfrak{A%
}_{2}<\infty $, and both testing conditions hold,%
\begin{equation}
\int_{I}\lvert T(\mathbf{1}_{I}\sigma )\rvert ^{2}\;\omega (dx)\leq 
\mathfrak{T}^{2}\left\vert I\right\vert _{\sigma }\,,
\label{forward testing}
\end{equation}%
\begin{equation}
\int_{I}\lvert T^{\ast }(\mathbf{1}_{I}\omega )\rvert ^{2}\;\sigma (dx)\leq (%
\mathfrak{T}^{\ast })^{2}\left\vert I\right\vert _{\omega }\,.
\label{baclward testimg}
\end{equation}
\end{theorem}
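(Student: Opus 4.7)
The plan is to reduce the theorem to the general $T1$ theorem proved in \cite{SaShUr10}, which asserts that for a standard singular integral on the line, the two weight norm inequality follows from the four Muckenhoupt conditions $\mathfrak{A}_2<\infty$, both testing conditions, and both energy conditions. Since the Muckenhoupt and testing conditions are hypothesized, and are necessary, the entire content of the sufficiency direction reduces to \emph{deducing the two energy conditions from gradient ellipticity together with $\mathfrak{A}_2$ and the testing conditions}.

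The first step is to upgrade gradient ellipticity to the strong energy reversal property \eqref{will fail} of \cite{SaShUr4}. Let $J$ be an interval and let $\mu$ be a positive measure supported outside $\gamma J$ for $\gamma$ sufficiently large. For $x,x'\in J$, the mean value theorem gives
\begin{equation*}
T\mu(x)-T\mu(x')=(x-x')\int \frac{\partial K}{\partial x}(\xi_{x,x',y},y)\,d\mu(y)
\end{equation*}
for some $\xi_{x,x',y}\in J$. The gradient ellipticity hypothesis forces the kernel derivative to be of a single sign, so this integral does not suffer cancellation; comparing $(\xi-y)^{-2}$ to the Poisson density $|J|/(|J|+\mathrm{dist}(y,J))^2$ for $y\notin\gamma J$ yields
\begin{equation*}
|T\mu(x)-T\mu(x')|\gtrsim \frac{|x-x'|}{|J|}\,\mathrm{P}(J,\mu).
\end{equation*}
Squaring and integrating $d\omega(x)d\omega(x')/|J|_\omega^2$ over $J\times J$ produces the right hand side of \eqref{will fail} on the left, and $\mathsf{E}(J,\omega)^2 \mathrm{P}(J,\mu)^2$ on the right, establishing strong $\omega$-energy reversal on $J$. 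The analogous statement with $\omega$ and $\sigma$ swapped follows by the same argument applied to the adjoint kernel, whose $y$-gradient is also elliptic by hypothesis.

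The second step is the standard reduction of the energy condition to testing plus Muckenhoupt via energy reversal, as in \cite{LaSaUr2}. Given a decomposition $I_0=\bigcup_{r\geq 1}I_r$, split $\sigma\mathbf{1}_{I_0}=\sigma\mathbf{1}_{I_0\setminus\gamma I_r}+\sigma\mathbf{1}_{\gamma I_r\cap I_0}$ for each $r$. For the first piece, strong energy reversal and the elementary inequality $|T\mu(x)-T\mu(x')|^2\leq 2|T\mu(x)|^2+2|T\mu(x')|^2$ give
\begin{equation*}
|I_r|_\omega\,\mathsf{E}(I_r,\omega)^2\,\mathrm{P}(I_r,\sigma\mathbf{1}_{I_0\setminus\gamma I_r})^2
\lesssim \int_{I_r}\bigl|T_\sigma(\mathbf{1}_{I_0\setminus\gamma I_r})\bigr|^2\,d\omega,
\end{equation*}
and summing over $r$, using pairwise disjointness of the $I_r$ and the trivial bound $|T_\sigma(\mathbf{1}_{I_0\setminus\gamma I_r})|\leq |T_\sigma(\mathbf{1}_{I_0})|+|T_\sigma(\mathbf{1}_{\gamma I_r\cap I_0})|$, the forward testing condition controls the dominant term by $\mathfrak{T}^2|I_0|_\sigma$. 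The Poisson tail piece $\mathrm{P}(I_r,\sigma\mathbf{1}_{\gamma I_r\cap I_0})^2$ together with the neighborhood $T_\sigma(\mathbf{1}_{\gamma I_r\cap I_0})$-terms are absorbed by $\mathfrak{A}_2$, using the punctured Muckenhoupt conditions $A_2^{\mathrm{punct}}$ and $A_2^{\ast,\mathrm{punct}}$ to correctly handle any shared point masses between $\sigma$ and $\omega$ (this is where one needs the refined Muckenhoupt notation of \cite{SaShUr10}). The dual energy condition is handled symmetrically.

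The main technical obstacle is not the energy reversal step, which is essentially a direct consequence of gradient ellipticity plus the mean value theorem, but rather the bookkeeping in step two: accounting for the hole $\gamma I_r$ in the Poisson integrals, estimating the error between $\mathrm{P}(I_r,\sigma\mathbf{1}_{I_0})$ and $\mathrm{P}(I_r,\sigma\mathbf{1}_{I_0\setminus\gamma I_r})$, and ensuring all such contributions are absorbed correctly by the refined fraktur $A_2$ conditions in the presence of possible common point masses. Once this is done, Theorem \ref{gradient elliptic} follows immediately by invoking the main theorem of \cite{SaShUr10}.
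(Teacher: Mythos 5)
Your proposal follows essentially the same route as the paper: energy reversal derived from gradient ellipticity (your mean value theorem step is equivalent to the paper's fundamental theorem of calculus computation), then control of both energy conditions by the testing and punctured Muckenhoupt conditions, then an appeal to the $T1$ theorem of \cite{SaShUr10}. One caution about the step you yourself flag as the main obstacle: with $\gamma$-enlarged holes the near-field terms lead to sums of the form $\sum_{r}\left\vert \gamma I_{r}\right\vert _{\sigma }$, and dilates of pairwise disjoint intervals need \emph{not} have bounded overlap, so such sums are not controlled by $\left\vert I_{0}\right\vert _{\sigma }$ without further argument. The paper dissolves this issue at the source: since $(x-y)(z-y)\leq 4\left( c_{J}-y\right) ^{2}$ for $x,z\in J$ and $y\notin J$, the single sign of $\frac{\partial }{\partial x}K$ already yields $\left\vert T\mu \left( x\right) -T\mu \left( z\right) \right\vert \gtrsim \frac{\left\vert x-z\right\vert }{\left\vert J\right\vert }\mathsf{P}\left( J,\mu \right) $ for $\mu $ supported merely outside $J$ itself, with no $\gamma $-separation; one may therefore take the hole to be $J_{n}$ itself, and the local term $\left( \mathrm{P}\left( J_{n},\mathbf{1}_{J_{n}}\mu \right) /\left\vert J_{n}\right\vert \right) ^{2}\left\Vert \mathsf{P}_{J_{n}}^{\omega }\mathbf{x}\right\Vert _{L^{2}\left( \omega \right) }^{2}\lesssim A_{2}^{\mathrm{punct}}\left\vert J_{n}\right\vert _{\mu }$ then sums immediately by disjointness of the $J_{n}$, which is exactly the bookkeeping you were worried about.
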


Theorems and \ref{energy condition fails} and \ref{gradient elliptic} point
to a new phenomenon that is needed in order obtain the energy conditions,
namely the need for gradient ellipticity of the kernels of the singular
integrals. This paper takes a first step toward the pursuit of $T1$ theorems
via energy conditions.

We end this introduction by pointing to some applications of the two weight $%
T1$ theorem in operator theory, such as in \cite{LaSaShUrWi}, where
embedding measures are characterized for model spaces $K_{\theta }$, where $%
\theta $ is an inner function on the disk, and where norms of composition
operators are characterized that map $K_{\theta }$ into Hardy and Bergman
spaces. A $T1$ theorem could also have implications for a number of problems
that are higher dimensional analogues of those connected to the Hilbert
transform\footnote{%
We thank Professor N. Nikolski for providing information on many of these
topics.},

\begin{enumerate}
\item when a rank one perturbation of a unitary operator is similar to a
unitary operator (see e.g. \cite{Vol} and \cite{NiTr}): this could extend to
an analogous question for a rank one perturbation of a normal operator $T$
and lead to a two weight inequality for the Cauchy transform with one
measure being the spectral measure of $T$,

\item when a product of two densely defined Toeplitz operators $T_{a}T_{b}$
is a bounded operator, which is equivalent to the Birkhoff-Wiener-Hopf
factorization for a given function $c=ab$; the same questions for the
Bergman space could lead to a two weight problem for the Beurling transform,

\item questions regarding subspaces of the Hardy space invariant under the
inverse shift operator (see e.g. \cite{Vol} and \cite{NaVo}),

\item questions concerning orthogonal polynomials (see e.g. \cite{VoYu}, 
\cite{PeVoYu} and \cite{PeVoYu1}),
\end{enumerate}

and also to a variety of questions in quasiconformal theory (due to the
relevance of the Beurling transform in that context) such as,

\begin{enumerate}
\item the conjecture of Iwaniec and Martin, at the level of Hausdorff
dimension distortion (see \cite{IwMa}) and, at the level of Hausdorff
measures, higher dimensional analogues of the Astala conjecture (see e.g. 
\cite{LaSaUr}), for which proof an (essentially) two weight inequality for
the Beurling transform was crucial, and in general, similar questions
pertaining to the higher dimensional analogues of the Beurling transform,

\item the problem of characterizing which Beltrami coefficients give rise to
biLipschitz maps (see e.g. \cite{AsGo} where a two weight theorem is proved
for very specific weights),

\item and to the well known problem of the connectivity of the manifold of
planar chord-arc curves (see e.g. \cite{AsGo} and \cite{AsZi}).
\end{enumerate}

The proof of Theorem \ref{energy condition fails} will be given in the next
four sections, and the proof of Theorem \ref{gradient elliptic} will be
given in the final section.

\section{Families of Cantor measures}

Let $N\geq 3$. We construct a standard variant $\mathsf{E}^{\left( N\right)
} $ of the middle thirds Cantor set $\mathsf{E}$, namely we remove the
middle `$\frac{N-2}{N}$' at each generation, and we then construct an
associated weight pair $\left( \widehat{\sigma },\widehat{\omega }\right) $,
where $\widehat{\omega }$ is a `redistribution' of the Cantor measure $%
\omega ^{\left( N\right) }$ associated with $\mathsf{E}^{\left( N\right) }$,
and $\widehat{\sigma }$ is mutually singular with respect to $\widehat{%
\omega }$. We will later show that for $N$ sufficiently large, there is an
elliptic perturbation $H_{\flat }$ of the Hilbert transform $H$, such that $%
H_{\flat } $ is bounded from $L^{2}\left( \widehat{\sigma }\right) $ to $%
L^{2}\left( \widehat{\omega }\right) $, but the backward Energy Condition 
\textbf{fails} for the weight pair $\left( \widehat{\sigma },\widehat{\omega 
}\right) $.

\subsection{The Cantor construction}

We modify the construction of the middle-third Cantor set $\mathsf{E}$ and
Cantor measure $\omega $ on the closed unit interval $I_{1}^{0}=\left[ 0,1%
\right] $. Fix a real number%
\begin{equation*}
N>2,
\end{equation*}%
and at each stage, remove the central interval of proportion%
\begin{equation*}
\alpha \equiv \frac{N-2}{N}
\end{equation*}%
(the usual Cantor construction is the case $N=3$ and $\alpha =\frac{1}{3}$).
At the $k^{th}$ generation in the new construction, there is a collection $%
\left\{ I_{j}^{k}\right\} _{j=1}^{2^{k}}$ of $2^{k}$ pairwise disjoint
closed intervals of length $\left\vert I_{j}^{k}\right\vert =\frac{1}{N^{k}}$%
. With $K_{k}=\bigcup_{j=1}^{2^{k}}I_{j}^{k}$, the $N$-Cantor set is defined
by 
\begin{equation*}
\mathsf{E}^{\left( N\right) }=\bigcap_{k=1}^{\infty
}K_{k}=\bigcap_{k=1}^{\infty }\left( \bigcup_{j=1}^{2^{k}}I_{j}^{k}\right) .
\end{equation*}%
The $N$-Cantor measure $\omega ^{\left( N\right) }$ is the unique
probability measure supported in $\mathsf{E}^{\left( N\right) }$ with the
property that it is equidistributed among the intervals $\left\{
I_{j}^{k}\right\} _{j=1}^{2^{k}}$ at each scale $k$, i.e.%
\begin{equation*}
\omega ^{\left( N\right) }(I_{j}^{k})=2^{-k},\ \ \ \ \ k\geq 0,1\leq j\leq
2^{k}.
\end{equation*}

Now we fix $N$ large for the moment and suppress the dependence on $N$ in
our notation, e.g. we denote $\omega ^{\left( N\right) }$ by simply $\omega $%
. We denote the removed open middle $\alpha ^{th}$ of $I_{j}^{k}$ by $%
G_{j}^{k}$.

\section{The flattened Hilbert transform}

We will now take an integer $N\geq 3$ large and a positive number $\rho \in
\left( 0,1\right) $ close to $1$, and flatten the convolution kernel $%
K\left( x\right) =\frac{1}{x}$ of the Hilbert transform in sufficiently
small neighbourhoods of the points $\left\{ \pm N^{k}\right\} _{k\in \mathbb{%
Z}}$ to obtain a flattened Hilbert transform $H_{\flat }$. In order to help
motivate this definition, we divert our attention to a brief explanation of
what we will subsequently do with the flattened Hilbert transform $H_{\flat
} $. Let $\dot{z}_{j}^{k}\in G_{j}^{k}$ be the center of the interval $%
G_{j}^{k}$, which is also the center of the interval $I_{j}^{k}$.

\begin{description}
\item[Motivation] We will later redistribute the measure $\omega $
constructed above into a new measure $\widehat{\omega }$ supported on $%
E^{\left( N\right) }$ with the property that $H_{\flat }\widehat{\omega }%
\left( \dot{z}_{j}^{k}\right) =0$ for all $\left( k,j\right) $. Then we will
define the weights $\widehat{s}_{j}^{k}$ so that the measure 
\begin{equation*}
\widehat{\dot{\sigma}}=\sum_{k,j}\widehat{s}_{j}^{k}\delta _{{\dot{z}}%
_{j}^{k}}\ ,
\end{equation*}%
satisfies a certain `local' $A_{2}$ condition with respect to $\widehat{%
\omega }$, and define%
\begin{equation*}
\widehat{{\sigma }}\equiv \sum_{k,j}\widehat{s}_{j}^{k}\frac{1}{\left\vert
L_{j}^{k}\right\vert }\mathbf{1}_{L_{j}^{k}}\ ,
\end{equation*}%
where $L_{j}^{k}$ is a small interval centered at ${\dot{z}}_{j}^{k}$. We
will then establish in later sections that the weight pair $\left( \widehat{%
\dot{\sigma}},\widehat{\omega }\right) $ satisfies the $\mathcal{A}_{2}$
condition, both the forward and backward testing conditions with respect to $%
H_{\flat }$, and the forward and backward energy conditions. Thus we
conclude from our theorem in \cite{SaShUr7} (note that $\widehat{\omega }$
and $\widehat{\dot{\sigma}}$ have no common point masses) that $\left( 
\widehat{\dot{\sigma}},\widehat{\omega }\right) $ satisfies the norm
inequality with respect to $H_{\flat }$, and from this we then deduce the
norm inequality for the weight pair $\left( \widehat{\sigma },\widehat{%
\omega }\right) $ with respect to $H_{\flat }$. Finally we show that the
weight pair $\left( \widehat{\sigma },\widehat{\omega }\right) $ \textbf{%
fails} to satisfy the backward energy condition.
\end{description}

\subsection{The flattened Hilbert kernel}

We define $K_{\flat }:\left[ \frac{1}{\sqrt{N}},\sqrt{N}\right] \rightarrow %
\left[ \frac{1}{\sqrt{N}},\sqrt{N}\right] $ to be smooth and satisfy 
\begin{equation*}
K_{\flat }\left( x\right) =\left\{ 
\begin{array}{ccc}
\frac{1}{x} & \text{ if } & \frac{1}{\sqrt{N}}\leq x\leq \frac{1}{\sqrt{\rho
N}} \\ 
1 & \text{ if } & \frac{1}{\sqrt{\rho ^{2}N}}\leq x\leq \sqrt{\rho ^{2}N} \\ 
\frac{1}{x} & \text{ if } & \sqrt{\rho N}\leq x\leq \sqrt{N}%
\end{array}%
\right. .
\end{equation*}%
Then for $k\in \mathbb{Z}$, we extend the definition of $K_{\flat }$ to the
intervals $N^{k}\left[ \frac{1}{\sqrt{N}},\sqrt{N}\right] =\left[ N^{k-\frac{%
1}{2}},N^{k+\frac{1}{2}}\right] $ by $K_{\flat }\left( x\right)
=N^{-k}K_{\flat }\left( N^{-k}x\right) $ for $x\in \left[ N^{k-\frac{1}{2}%
},N^{k+\frac{1}{2}}\right] $, and finally extend the definition of $K_{\flat
}$ to the entire real line $\mathbb{R}$ by requiring it to be an odd
function on $\mathbb{R}$. The resulting kernel $K_{\flat }$ is smooth on $%
\mathbb{R}\setminus \left\{ 0\right\} $ and satisfies the standard CZ
estimates, and most importantly, for each $k$ the kernel $K_{\flat }$ is
flat on the interval%
\begin{equation*}
F_{k}\equiv N^{-k}\left[ \frac{1}{\sqrt{\rho ^{2}N}},\sqrt{\rho ^{2}N}\right]
=\left[ \frac{1}{\rho }N^{-k-\frac{1}{2}},\rho N^{-k+\frac{1}{2}}\right]
\end{equation*}%
containing the point $N^{-k}$.

Equally important is the connection with the measure $\omega $ constructed
with parameter $N$. If $x\in I_{r}^{\ell }$ and $y\in I_{j}^{k}$ with $%
I_{r}^{\ell }\cap I_{j}^{k}=\emptyset $, and if $m\in \mathbb{N}$ is the
largest positive integer such that both $x$ and $y$ belong to some $%
I_{i}^{m} $ for $1\leq i\leq 2^{m}$, then%
\begin{equation}
\left\vert \frac{1}{N^{m}}-\left\vert x-y\right\vert \right\vert \leq \frac{2%
}{N^{m+1}}.  \label{unique m}
\end{equation}%
Define $I_{r,\limfunc{left}}^{\ell }$ and $I_{r,\limfunc{right}}^{\ell }$ to
be the closest intervals $I_{j}^{\ell +1}$ on each side of $\dot{z}%
_{r}^{\ell }$ that belong to the next generation. We claim that if 
\begin{equation*}
x\in I_{i,\limfunc{left}}^{m}\cup \left[ \dot{z}_{i}^{m}-\frac{1}{N^{m+1}},%
\dot{z}_{i}^{m}+\frac{1}{N^{m+1}}\right] ,\ \ \ y\in I_{i,\limfunc{right}%
}^{m},
\end{equation*}%
then%
\begin{equation}
\left\vert x-y\right\vert \in \left[ \frac{1}{N^{m}}-\frac{2}{N^{m+1}},\frac{%
1}{N^{m}}\right] \cup \left[ \frac{1}{2N^{m}}-\frac{2}{N^{m+1}},\frac{1}{%
2N^{m}}+\frac{1}{N^{m+1}}\right] \subset F_{m}\ ,  \label{contain}
\end{equation}%
provided 
\begin{equation*}
\frac{1}{\rho }N^{-m-\frac{1}{2}}\leq \frac{1}{2N^{m}}-\frac{2}{N^{m+1}}%
\text{ and }\frac{1}{N^{m}}\leq \rho N^{-m+\frac{1}{2}}\ ,
\end{equation*}%
equivalently 
\begin{equation*}
\frac{1}{\rho }\frac{1}{\sqrt{N}}\leq \frac{1}{2}-\frac{2}{N}\text{ and }%
1\leq \rho \sqrt{N}\ ,
\end{equation*}%
which holds for example if $N\geq 16$ and $\rho \geq \frac{2}{3}$.

\section{Self-similar measures}

\subsection{The redistributed measure $\protect\widehat{\protect\omega } 
\label{SubSec redist}$}

We now construct, by induction on the level $\ell $, new measures $\omega
_{\ell }$ by adjusting at each stage the relative weighting of the measure $%
\omega _{\ell }$ on these two intervals $I_{r,\limfunc{left}}^{\ell }$ and $%
I_{r,\limfunc{right}}^{\ell }$. We begin with $\omega _{1}\equiv \omega $,
and having defined $\omega _{\ell }$ inductively we define $\omega _{\ell
+1} $ as follows. Fix the measure $\omega _{\ell }$ that was constructed
inductively, and note that it is supported on the Cantor set $\mathsf{E}%
^{\left( N\right) }$ constructed in the previous section. At the point $y=%
\dot{z}_{r}^{\ell }$, and with $x_{r,\limfunc{left}}^{\ell }$ and $x_{r,%
\limfunc{right}}^{\ell }$ denoting any points in $I_{r,\limfunc{left}}^{\ell
}$ and $I_{r,\limfunc{right}}^{\ell }$ respectively, we have from (\ref%
{contain}) that 
\begin{eqnarray}
&&H_{\flat }\omega _{\ell }\left( \dot{z}_{r}^{\ell }\right) =\int_{\left(
G_{r}^{\ell }\right) ^{c}}K_{\flat }\left( x-\dot{z}_{r}^{\ell }\right)
d\omega _{\ell }\left( x\right) =\sum_{i}\int_{I_{i}^{\ell +1}}K_{\flat
}\left( x-\dot{z}_{r}^{\ell }\right) d\omega _{\ell }\left( x\right)
\label{H flat} \\
&=&\int_{I_{r,\limfunc{left}}^{\ell }}K_{\flat }\left( x-\dot{z}_{r}^{\ell
}\right) d\omega _{\ell }\left( x\right) +\int_{I_{r,\limfunc{right}}^{\ell
}}K_{\flat }\left( x-\dot{z}_{r}^{\ell }\right) d\omega _{\ell }\left(
x\right) +\sum_{i:I_{i}^{\ell +1}\notin \left\{ I_{r,\limfunc{left}}^{\ell
},I_{r,\limfunc{right}}^{\ell }\right\} }\int_{I_{i}^{\ell +1}}K_{\flat
}\left( x-\dot{z}_{r}^{\ell }\right) d\omega _{\ell }\left( x\right)  \notag
\\
&=&K_{\flat }\left( x_{r,\limfunc{left}}^{\ell }-\dot{z}_{r}^{\ell }\right)
\left\vert I_{r,\limfunc{left}}^{\ell }\right\vert _{\omega _{\ell
}}+K_{\flat }\left( x_{r,\limfunc{right}}^{\ell }-\dot{z}_{r}^{\ell }\right)
\left\vert I_{r,\limfunc{right}}^{\ell }\right\vert _{\omega _{\ell
}}+\sum_{i:I_{i}^{\ell +1}\notin \left\{ I_{r,\limfunc{left}}^{\ell },I_{r,%
\limfunc{right}}^{\ell }\right\} }\int_{I_{i}^{\ell +1}}K_{\flat }\left( x-%
\dot{z}_{r}^{\ell }\right) d\omega _{\ell }\left( x\right)  \notag \\
&=&K_{\flat }\left( x_{r,\limfunc{left}}^{\ell }-\dot{z}_{r}^{\ell }\right)
\left( \left\vert I_{r,\limfunc{left}}^{\ell }\right\vert _{\omega _{\ell
}}-\left\vert I_{r,\limfunc{right}}^{\ell }\right\vert _{\omega _{\ell
}}\right) +\sum_{i:I_{i}^{\ell +1}\notin \left\{ I_{r,\limfunc{left}}^{\ell
},I_{r,\limfunc{right}}^{\ell }\right\} }\int_{I_{i}^{\ell +1}}K_{\flat
}\left( x-\dot{z}_{r}^{\ell }\right) d\omega _{\ell }\left( x\right) , 
\notag
\end{eqnarray}%
where we recall that $I_{r,\limfunc{left}}^{\ell }$ and $I_{r,\limfunc{right}%
}^{\ell }$ are the closest next generation intervals $I_{j}^{\ell +1}$ on
each side of $\dot{z}_{r}^{\ell }$. Now we define the relative weighting of
the measure $\omega _{\ell +1}$ on these two intervals $I_{r,\limfunc{left}%
}^{\ell }$ and $I_{r,\limfunc{right}}^{\ell }$ by taking $\omega _{\ell +1}$
to satisfy both%
\begin{eqnarray}
&&\left\vert I_{r}^{\ell }\right\vert _{\omega _{\ell +1}}=\left\vert I_{r,%
\limfunc{left}}^{\ell }\right\vert _{\omega _{\ell +1}}+\left\vert I_{r,%
\limfunc{right}}^{\ell }\right\vert _{\omega _{\ell +1}}=\left\vert I_{r,%
\limfunc{left}}^{\ell }\right\vert _{\omega _{\ell }}+\left\vert I_{r,%
\limfunc{right}}^{\ell }\right\vert _{\omega _{\ell }}=\left\vert
I_{r}^{\ell }\right\vert _{\omega _{\ell }},  \label{satisfy} \\
&&K_{\flat }\left( x_{r,\limfunc{left}}^{\ell }-\dot{z}_{r}^{\ell }\right)
\left( \left\vert I_{r,\limfunc{left}}^{\ell }\right\vert _{\omega _{\ell
+1}}-\left\vert I_{r,\limfunc{right}}^{\ell }\right\vert _{\omega _{\ell
+1}}\right) =-\sum_{i:I_{i}^{\ell +1}\notin \left\{ I_{r,\limfunc{left}%
}^{\ell },I_{r,\limfunc{right}}^{\ell }\right\} }\int_{I_{i}^{\ell
+1}}K_{\flat }\left( x-\dot{z}_{r}^{\ell }\right) d\omega _{\ell }\left(
x\right) .  \notag
\end{eqnarray}%
Thus we have%
\begin{eqnarray}
\left\vert I_{r,\limfunc{left}}^{\ell }\right\vert _{\omega _{\ell +1}} &=&%
\frac{1}{2}\left\{ \left\vert I_{r}^{\ell }\right\vert _{\omega _{\ell
}}-\sum_{i:I_{i}^{\ell +1}\notin \left\{ I_{r,\limfunc{left}}^{\ell },I_{r,%
\limfunc{right}}^{\ell }\right\} }\int_{I_{i}^{\ell +1}}\frac{K_{\flat
}\left( x-\dot{z}_{r}^{\ell }\right) }{K_{\flat }\left( x_{r,\limfunc{left}%
}^{\ell }-\dot{z}_{r}^{\ell }\right) }d\omega _{\ell }\left( x\right)
\right\} ,  \label{redistribution} \\
\left\vert I_{r,\limfunc{right}}^{\ell }\right\vert _{\omega _{\ell +1}} &=&%
\frac{1}{2}\left\{ \left\vert I_{r}^{\ell }\right\vert _{\omega _{\ell
}}+\sum_{i:I_{i}^{\ell +1}\notin \left\{ I_{r,\limfunc{left}}^{\ell },I_{r,%
\limfunc{right}}^{\ell }\right\} }\int_{I_{i}^{\ell +1}}\frac{K_{\flat
}\left( x-\dot{z}_{r}^{\ell }\right) }{K_{\flat }\left( x_{r,\limfunc{left}%
}^{\ell }-\dot{z}_{r}^{\ell }\right) }d\omega _{\ell }\left( x\right)
\right\} .  \notag
\end{eqnarray}

At this point we note that in analogy with (\ref{H flat}) we have%
\begin{equation*}
H_{\flat }\omega _{\ell +1}\left( \dot{z}_{r}^{\ell }\right) =K_{\flat
}\left( x_{r,\limfunc{left}}^{\ell }-\dot{z}_{r}^{\ell }\right) \left(
\left\vert I_{r,\limfunc{left}}^{\ell }\right\vert _{\omega _{\ell
+1}}-\left\vert I_{r,\limfunc{right}}^{\ell }\right\vert _{\omega _{\ell
+1}}\right) +\sum_{i:I_{i}^{\ell +1}\notin \left\{ I_{r,\limfunc{left}%
}^{\ell },I_{r,\limfunc{right}}^{\ell }\right\} }\int_{I_{i}^{\ell
+1}}K_{\flat }\left( x-\dot{z}_{r}^{\ell }\right) d\omega _{\ell +1}\left(
x\right) ,
\end{equation*}%
and then the second line in (\ref{satisfy}) gives%
\begin{equation*}
H_{\flat }\omega _{\ell +1}\left( \dot{z}_{r}^{\ell }\right)
=-\sum_{i:I_{i}^{\ell +1}\notin \left\{ I_{r,\limfunc{left}}^{\ell },I_{r,%
\limfunc{right}}^{\ell }\right\} }\int_{I_{i}^{\ell +1}}K_{\flat }\left( x-%
\dot{z}_{r}^{\ell }\right) d\omega _{\ell }\left( x\right)
+\sum_{i:I_{i}^{\ell +1}\notin \left\{ I_{r,\limfunc{left}}^{\ell },I_{r,%
\limfunc{right}}^{\ell }\right\} }\int_{I_{i}^{\ell +1}}K_{\flat }\left( x-%
\dot{z}_{r}^{\ell }\right) d\omega _{\ell +1}\left( x\right) .
\end{equation*}%
However, the intervals $I_{i}^{\ell +1}\notin \left\{ I_{r,\limfunc{left}%
}^{\ell },I_{r,\limfunc{right}}^{\ell }\right\} $ can be grouped into pairs $%
I_{k}^{\ell +1},I_{k^{\prime }}^{\ell +1}$ that are the children of the same
interval $I_{s}^{\ell }$ with $s\neq r$, and by (\ref{contain}) we have that 
$K_{\flat }\left( x-\dot{z}_{r}^{\ell }\right) $ is \emph{constant} on the
parent $I_{s}^{\ell }$ of $I_{k}^{\ell +1}$ and $I_{k^{\prime }}^{\ell +1}$.
Thus by the first line in (\ref{satisfy}), we conclude that%
\begin{eqnarray*}
&&-\sum_{i=k,k^{\prime }}\int_{I_{i}^{\ell +1}}K_{\flat }\left( x-\dot{z}%
_{r}^{\ell }\right) d\omega _{\ell }\left( x\right) +\sum_{i=k,k^{\prime
}}\int_{I_{i}^{\ell +1}}K_{\flat }\left( x-\dot{z}_{r}^{\ell }\right)
d\omega _{\ell +1}\left( x\right) \\
&=&K_{\flat }\left( \dot{z}_{s}^{\ell }-\dot{z}_{r}^{\ell }\right) \left(
-\left\vert I_{k}^{\ell +1}\right\vert _{\omega _{\ell }}-\left\vert
I_{k^{\prime }}^{\ell +1}\right\vert _{\omega _{\ell }}+\left\vert
I_{k}^{\ell +1}\right\vert _{\omega _{\ell +1}}+\left\vert I_{k^{\prime
}}^{\ell +1}\right\vert _{\omega _{\ell }+1}\right) =0.
\end{eqnarray*}%
Then summing over all $s\neq r$ we obtain the key consequence%
\begin{equation}
H_{\flat }\omega _{\ell +1}\left( \dot{z}_{r}^{\ell }\right) =0.  \label{key}
\end{equation}%
We will also see as an immediate consequence of Lemma \ref{eta is 1/N} below
that the ratios $\frac{\left\vert I_{r,\limfunc{left}}^{\ell }\right\vert
_{\omega _{\ell +1}}}{\left\vert I_{r,\limfunc{left}}^{\ell }\right\vert
_{\omega _{\ell }}}$ and $\frac{\left\vert I_{r,\limfunc{right}}^{\ell
}\right\vert _{\omega _{\ell +1}}}{\left\vert I_{r,\limfunc{right}}^{\ell
}\right\vert _{\omega _{\ell }}}$ are one of the two values $1\pm \frac{1}{N}
$.

Now we take the limit as $\ell \rightarrow \infty $ of the measures $\omega
_{\ell }$ to get the measure 
\begin{equation*}
\widehat{\omega }\equiv \lim_{\ell \rightarrow \infty }\omega _{\ell }
\end{equation*}%
that is supported on the Cantor set $\mathsf{E}^{\left( N\right) }$.
Finally, we define the interval $L_{j}^{k}$ by%
\begin{equation}
L_{j}^{k}\equiv \left[ {\dot{z}}_{j}^{k}-N^{-k-1},{\dot{z}}_{j}^{k}+N^{-k-1}%
\right] ,  \label{def L}
\end{equation}%
and for $N\geq 16$ and $\rho \geq \frac{2}{3}$ as above, we obtain from (\ref%
{key}) and (\ref{contain}) the following crucial estimate for $y\in
L_{j}^{k} $:%
\begin{equation}
H_{\flat }\widehat{\omega }\left( y\right) =0,\ \ \ \ \ \text{for }y\in
L_{j}^{k}.  \label{flatness}
\end{equation}

From (\ref{redistribution}) we have that $\left\vert I_{r,\limfunc{left}%
}^{\ell }\right\vert _{\widehat{\omega }}=\frac{1\pm \eta }{2}\left\vert
I_{r}^{\ell }\right\vert _{\widehat{\omega }}$ and $\left\vert I_{r,\limfunc{%
right}}^{\ell }\right\vert _{\widehat{\omega }}=\frac{1\mp \eta }{2}%
\left\vert I_{r}^{\ell }\right\vert _{\widehat{\omega }}$ for some positive
number $\eta =\eta \left( I_{r}^{\ell }\right) $ depending on the pair $%
\left( \ell ,r\right) $. We end this subsection by showing that $\eta \left(
I_{r}^{\ell }\right) $ is the constant $\frac{1}{N}$. For this it will be
convenient to introduce some tree notation.

\subsubsection{The dyadic tree}

Let $\left( \mathcal{T},\succcurlyeq ,\mathfrak{r},\pi ,\mathfrak{C}\right) $
denote the dyadic tree with relation $\succcurlyeq $, root $\mathfrak{r}$,
parent $\pi :\mathcal{T\setminus }\left\{ \mathfrak{r}\right\} \rightarrow 
\mathcal{T}$ and children $\mathfrak{C}:\mathcal{T}\rightarrow \mathcal{T}%
\times \mathcal{T}$ satisfying the following properties:

\begin{enumerate}
\item $\mathcal{T}$ is countable,

\item $\succcurlyeq $ is a partial order on $\mathcal{T}$, i.e. $%
\succcurlyeq $ is reflexive ($\alpha \succcurlyeq \alpha $), antisymmetric ($%
\alpha \succ \beta $ implies $\beta \not\succ \alpha $), and transitive ($%
\alpha \succcurlyeq \beta $ and $\beta \succcurlyeq \gamma $ implies $\alpha
\succcurlyeq \gamma $),

\item $\mathfrak{r}\succcurlyeq \alpha $ for all $\alpha \in \mathcal{T}$,

\item $\pi \alpha $ is the unique minimal element of $\left\{ \beta \in 
\mathcal{T}:\beta \succ \alpha \right\} $ for all $\alpha \in \mathcal{T}$,

\item $\mathfrak{C}\left( \alpha \right) =\left( \alpha _{-},\alpha
_{+}\right) $ where $\alpha _{-}$ and $\alpha _{+}$ are the unique pair of
maximal elements of $\left\{ \beta \in \mathcal{T}:\beta \prec \alpha
\right\} $.
\end{enumerate}

Define the \emph{distance} $d\left( \alpha ,\beta \right) $ between two
points $\alpha ,\beta $ in $\mathcal{T}$\ to be the number of steps taken to
reach $\beta $ from $\alpha $ by travelling along the unique geodesic $%
\overrightarrow{\alpha \beta }$ connecting $\alpha $ to $\beta $. Then
define the \emph{depth} or \emph{level} of $\alpha $ to be $d\left( \alpha
\right) =d\left( \mathfrak{r},\alpha \right) $. We refer to $\pi \alpha $ as
the \emph{parent} of $\alpha $, to $S_{\alpha }\equiv \left\{ \beta \in 
\mathcal{T}:\beta \prec \alpha \right\} $ as the \emph{successor set} of $%
\alpha $, and to $\alpha _{-}$ and $\alpha _{+}$ as the \emph{left }and\emph{%
\ right child} of $\alpha $ respectively. For convenience we often write $%
\pi ^{1}\alpha =\pi \alpha $, $\pi ^{2}\alpha =\pi \pi \alpha $, etc. and $%
\left( \alpha _{\pm }\right) _{\pm }=\alpha _{\pm \pm }$, $\left( \alpha
_{\pm \pm }\right) _{\pm }=\alpha _{\pm \pm \pm }$ etc. More generally, for $%
\mathbf{\varepsilon }\in \left\{ +,-\right\} ^{m}$, we denote by $\alpha _{%
\mathbf{\varepsilon }}$ the point (called an order $m$-grandchild) $\alpha
_{\pm \pm ...\pm }$ at depth $m$ below $\alpha $ that is given by the choice
of $\pm $ as determined by the sequence $\mathbf{\varepsilon }$. Finally,
for any $\alpha \in \mathcal{T\setminus }\left\{ \mathfrak{r}\right\} $, we
define the sibling $\theta \alpha $ of $\alpha $ to be the other child of $%
\pi \alpha $. The relevant example of a dyadic tree here is the following
dyadic tree defined in terms of our construction of intervals $I_{r}^{\ell }$%
\ above, and with partial order defined in terms of set inclusion $I\prec J$
when $I\subset J$: 
\begin{equation*}
\mathcal{D}=\left\{ I_{r}^{\ell }:\left( \ell ,r\right) \in \mathbb{Z}%
_{+}\times \mathbb{N},1\leq r\leq 2^{\ell }\right\} \sim \left\{ \left( \ell
,r\right) \in \mathbb{Z}_{+}\times \mathbb{N}:1\leq r\leq 2^{\ell }\right\} .
\end{equation*}%
This tree has an additional structure that is derived from its
representation as a collection of intervals on the line - namely the two
children of $I$ occur as a left child $I_{-}$ and a right child $I_{+}$ on
the line. Then $I_{\pm \pm }$ represent the four grandchildren, etc. We
refer to trees with this additional left and right child structure as \emph{%
embedded trees} since they can be drawn in the obvious way in the plane.

\begin{lemma}
\label{eta is 1/N}We have $\eta \left( I\right) =\frac{1}{N}$ for all $I\in 
\mathcal{D}$, and in addition,%
\begin{eqnarray}
\left\vert I_{--}\right\vert _{\widehat{\omega }} &=&\frac{1+\eta }{2}%
\left\vert I_{-}\right\vert _{\widehat{\omega }}\text{ and }\left\vert
I_{-+}\right\vert _{\widehat{\omega }}=\frac{1-\eta }{2}\left\vert
I_{-}\right\vert _{\widehat{\omega }}\ ,  \label{precisely} \\
\left\vert I_{+-}\right\vert _{\widehat{\omega }} &=&\frac{1-\eta }{2}%
\left\vert I_{+}\right\vert _{\widehat{\omega }}\text{ and }\left\vert
I_{++}\right\vert _{\widehat{\omega }}=\frac{1+\eta }{2}\left\vert
I_{+}\right\vert _{\widehat{\omega }}\ .  \notag
\end{eqnarray}
\end{lemma}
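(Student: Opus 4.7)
The plan is to prove the lemma by induction on the level $\ell$ of $I$, reducing it to an algebraic identity that collapses by telescoping. Fix $I=I_{\beta_{\ell}}^{\ell}$ and encode its chain of ancestors $I_{\beta_{0}}^{0}\supset\cdots\supset I_{\beta_{\ell}}^{\ell}$ by signs $\epsilon_{m}\in\{-1,+1\}$, taking $\epsilon_{m}=+1$ iff $I_{\beta_{m}}^{m}$ is the right child of its parent. Write $\mu_{m}=\left\vert I_{\beta_{m}}^{m}\right\vert _{\omega_{m}}$, let $J_{m}$ denote the sibling of $I_{\beta_{m+1}}^{m+1}$ inside $I_{\beta_{m}}^{m}$, and set $\delta_{m}=\left\vert \text{right child of }I_{\beta_{m}}^{m}\right\vert _{\omega_{m+1}}-\left\vert \text{left child}\right\vert _{\omega_{m+1}}$. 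In these terms the lemma becomes the single identity $\delta_{\ell}=\epsilon_{\ell}\mu_{\ell}/N$, from which the explicit sign pattern in \eqref{precisely} is immediate by specializing at $I_{-}$ (where $\epsilon=-1$) and at $I_{+}$ (where $\epsilon=+1$).

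The first step is to recast $\delta_{\ell}$ as a sum over ancestor levels. Using \eqref{contain} together with the hypotheses $N\geq 16$, $\rho\geq 2/3$ and the scaling $K_{\flat}\left( N^{k}z\right) =N^{-k}K_{\flat}\left( z\right) $, the kernel $K_{\flat}\left( \cdot -\dot{z}_{r}^{\ell}\right) $ is seen to be \emph{constant} on each $I_{s}^{\ell}$ with $s\neq r$, taking the value $\sigma(s)N^{m(s)}$, where $m(s)$ is the level of the common ancestor of $I_{r}^{\ell}$ and $I_{s}^{\ell}$, and $\sigma(s)=\pm 1$ records which side of $\dot{z}_{r}^{\ell}$ contains $I_{s}^{\ell}$. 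Grouping the $s\neq r$ by $m(s)$ (those with $m(s)=m$ are precisely the level-$\ell$ descendants of $J_{m}$ and all share the sign $-\epsilon_{m+1}$), the identity \eqref{redistribution} rearranges to
\begin{equation*}
\delta_{\ell}\;=\;\sum_{m=0}^{\ell-1}\epsilon_{m+1}\,N^{m-\ell}\,\left\vert J_{m}\right\vert _{\omega_{\ell}}.
\end{equation*}

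The inductive hypothesis $\delta_{m}=\epsilon_{m}\mu_{m}/N$ for $m\leq\ell-1$, combined with the first line of \eqref{satisfy} (which says that the mass of an interval and of its sibling is frozen after its parent has been processed), gives the closed-form recursions
\begin{equation*}
\mu_{m}\;=\;\tfrac{1}{2}\bigl(1+\epsilon_{m}\epsilon_{m-1}/N\bigr)\mu_{m-1}\quad(m\geq 2),\qquad\mu_{1}=\tfrac{1}{2},
\end{equation*}
and $\left\vert J_{m}\right\vert _{\omega_{\ell}}=\left\vert J_{m}\right\vert _{\omega_{m+1}}=\tfrac{1}{2}(1-\epsilon_{m+1}\epsilon_{m}/N)\mu_{m}$ for $m\geq 1$, with $\left\vert J_{0}\right\vert _{\omega_{\ell}}=1/2$. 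Introducing $c_{m}\equiv\epsilon_{m}\mu_{m}$, the mass recursion rearranges to the pivotal identity $\epsilon_{m+1}\mu_{m}=2c_{m+1}-c_{m}/N$. Substituting these into the sum for $\delta_{\ell}$, the two resulting sums of the form $\sum c_{m}N^{m-\ell-1}$ telescope: the top endpoint contributes $2c_{\ell}/N$, and the bottom endpoint is proportional to $\epsilon_{1}(1-2\mu_{1})$, which vanishes identically because $\mu_{1}=1/2$ (the initial condition at the unredistributed root). What remains is $\delta_{\ell}=c_{\ell}/N=\epsilon_{\ell}\mu_{\ell}/N$, closing the induction; the base case $\ell=1$ is subsumed, since the sum then has only the $m=0$ term and reduces to $\delta_{1}=\epsilon_{1}/(2N)=\epsilon_{1}\mu_{1}/N$.

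The main obstacle is spotting the substitution $c_{m}=\epsilon_{m}\mu_{m}$ that triggers the telescoping, and observing that the boundary term is killed \emph{exactly} by $\mu_{1}=1/2$; this arithmetic coincidence is precisely what forces $\eta$ to be the clean constant $1/N$ at every level rather than a more intricate level-dependent expression. The preliminary of checking that $K_{\flat}\left( \cdot-\dot{z}_{r}^{\ell}\right) $ is constant on every distant $I_{s}^{\ell}$ simultaneously for all common-ancestor levels $m(s)$ is where the sharpness conditions $N\geq 16$ and $\rho\geq 2/3$ enter essentially, and it is what makes the bookkeeping of the telescoping possible in the first place.
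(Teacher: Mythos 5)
Your argument is correct, but it takes a genuinely different route from the paper's. The paper also inducts on the level, but its induction hypothesis carries the vanishing $H_{\flat }\widehat{\omega }\left( c_{I}\right) =0$ alongside $\eta =\frac{1}{N}$; the key move is a two-point comparison: by flatness of the kernel, the contributions of $I^{c}$ and of $I_{-}$ to $H_{\flat }\widehat{\omega }$ are literally unchanged in passing from $c_{I}$ to $c_{I_{+}}$, so the entire far field is absorbed by \eqref{then have} and only the near-field terms $-N^{\ell }\left\vert I_{+}\right\vert _{\widehat{\omega }}$ and $N^{\ell +1}\left( \left\vert I_{++}\right\vert _{\widehat{\omega }}-\left\vert I_{+-}\right\vert _{\widehat{\omega }}\right) $ survive, forcing the ratios $\frac{1\pm \eta }{2}$ in one line. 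You never invoke the vanishing at centers: you expand the defining formula \eqref{redistribution} into a sum over ancestor shells $J_{m}$ and telescope. I checked the algebra: the shell decomposition $\delta _{\ell }=\sum_{m=0}^{\ell -1}\epsilon _{m+1}N^{m-\ell }\left\vert J_{m}\right\vert _{\omega _{\ell }}$ is right (the sign $-\epsilon _{m+1}$ of the kernel on $J_{m}$ against the denominator $-N^{\ell }$, and the freezing of $\left\vert J_{m}\right\vert $ after stage $m+1$ via \eqref{satisfy}, come out correctly); the identity $\epsilon _{m+1}\left\vert J_{m}\right\vert _{\omega _{\ell }}=c_{m+1}-\frac{1}{N}c_{m}$ holds for all $m\geq 0$ once one sets $c_{0}=0$ (equivalently, your observation that the boundary term dies exactly because $\mu _{1}=\frac{1}{2}$, the root being unredistributed); and the telescoped value is $\frac{1}{N}c_{\ell }=\frac{1}{N}\epsilon _{\ell }\mu _{\ell }$ as claimed, which is precisely \eqref{precisely}. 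The one ingredient you use without detailed proof --- that $K_{\flat }\left( \cdot -\dot{z}_{r}^{\ell }\right) $ equals the constant $\pm N^{m}$ on all of $J_{m}$ --- is exactly \eqref{contain} applied at the common ancestor, which the paper uses in the same way, so there is no gap. The paper's route buys brevity and the useful byproduct $H_{\flat }\widehat{\omega }\left( c_{I}\right) =0$; yours buys a self-contained computation straight from \eqref{redistribution} that makes transparent why $\eta $ is the level-independent constant $\frac{1}{N}$.
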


\begin{proof}
Let $c_{I}$ be the center of an interval $I$, and let $\mathcal{D}$ be the
tree of all the intervals $I_{r}^{\ell }$ under containment. For $I\in 
\mathcal{D}$ we denote by $I_{-}$ and $I_{+}$ the left and right children of 
$I$ in $D$, and thus $I_{--}$, $I_{-+}$, $I_{+-}$ and $I_{++}$ are the four
grandchildren of $I$ in $\mathcal{D}$ from left to right. We prove the
following statement by induction on $\ell $:%
\begin{equation*}
\eta \left( K\right) =\frac{1}{N}\text{ for }0<d\left( K\right) \leq \ell 
\text{ and }H_{\flat }\widehat{\omega }\left( c_{I}\right) =0\text{ for }%
d\left( I\right) =\ell ,\ \ \ \ \ K,I\in \mathcal{D}.
\end{equation*}%
The case $\ell =0$ holds trivially. Assume now that the induction statement
holds for a fixed $\ell \geq 0$, and fix an interval $I=I_{r}^{\ell }\in 
\mathcal{T}$ at level $\ell $ in the tree, so that $I_{-}=I_{r,\limfunc{left}%
}^{\ell }$ and $I_{+}=I_{r,\limfunc{right}}^{\ell }$. Then we have%
\begin{equation}
0=H_{\flat }\widehat{\omega }\left( c_{I}\right) =H_{\flat }\left( \mathbf{1}%
_{I^{c}}\widehat{\omega }\right) \left( c_{I}\right) +H_{\flat }\left( 
\mathbf{1}_{I_{-}}\widehat{\omega }\right) \left( c_{I}\right) +H_{\flat
}\left( \mathbf{1}_{I_{+}}\widehat{\omega }\right) \left( c_{I}\right) .
\label{then have}
\end{equation}%
Now, using the `flat spots' in the construction of $K_{\flat }$, we obtain
both $H_{\flat }\left( \mathbf{1}_{I^{c}}\widehat{\omega }\right) \left(
c_{I_{+}}\right) =H_{\flat }\left( \mathbf{1}_{I^{c}}\widehat{\omega }%
\right) \left( c_{I}\right) $ and $H_{\flat }\left( \mathbf{1}_{I_{-}}%
\widehat{\omega }\right) \left( c_{I_{+}}\right) =H_{\flat }\left( \mathbf{1}%
_{I_{-}}\widehat{\omega }\right) \left( c_{I}\right) $. Using these two
equalities in the second line below, and then using (\ref{then have}) in the
third line below, we obtain%
\begin{eqnarray*}
H_{\flat }\widehat{\omega }\left( c_{I_{+}}\right) &=&H_{\flat }\left( 
\mathbf{1}_{I^{c}}\widehat{\omega }\right) \left( c_{I_{+}}\right) +H_{\flat
}\left( \mathbf{1}_{I_{-}}\widehat{\omega }\right) \left( c_{I_{+}}\right)
+H_{\flat }\left( \mathbf{1}_{I_{+}}\widehat{\omega }\right) \left(
c_{I_{+}}\right) \\
&=&H_{\flat }\left( \mathbf{1}_{I^{c}}\widehat{\omega }\right) \left(
c_{I}\right) +H_{\flat }\left( \mathbf{1}_{I_{-}}\widehat{\omega }\right)
\left( c_{I}\right) +H_{\flat }\left( \mathbf{1}_{I_{+}}\widehat{\omega }%
\right) \left( c_{I_{+}}\right) \\
&=&-H_{\flat }\left( \mathbf{1}_{I_{+}}\widehat{\omega }\right) \left(
c_{I}\right) +H_{\flat }\left( \mathbf{1}_{I_{+}}\widehat{\omega }\right)
\left( c_{I_{+}}\right) \\
&=&-H_{\flat }\left( \mathbf{1}_{I_{+}}\widehat{\omega }\right) \left(
c_{I}\right) +H_{\flat }\left( \mathbf{1}_{I_{+-}}\widehat{\omega }\right)
\left( c_{I_{+}}\right) +H_{\flat }\left( \mathbf{1}_{I_{++}}\widehat{\omega 
}\right) \left( c_{I_{+}}\right) \\
&=&-N^{\ell }\left\vert I_{+}\right\vert _{\widehat{\omega }}-N^{\ell
+1}\left\vert I_{+-}\right\vert _{\widehat{\omega }}+N^{\ell +1}\left\vert
I_{++}\right\vert _{\widehat{\omega }}\ ,
\end{eqnarray*}%
and where in the last line above we have used the `flat spots' in the
construction of $K_{\flat }$. Thus in order to achieve $H_{\flat }\widehat{%
\omega }\left( c_{I_{+}}\right) =0$ we need%
\begin{equation*}
\left\vert I_{++}\right\vert _{\widehat{\omega }}-\left\vert
I_{+-}\right\vert _{\widehat{\omega }}=\frac{1}{N}\left\vert
I_{+}\right\vert _{\widehat{\omega }},
\end{equation*}%
and combined with the requirement that $\left\vert I_{++}\right\vert _{%
\widehat{\omega }}+\left\vert I_{+-}\right\vert _{\widehat{\omega }%
}=\left\vert I_{+}\right\vert _{\widehat{\omega }}$, we obtain%
\begin{equation*}
\left\vert I_{++}\right\vert _{\widehat{\omega }}=\frac{1+\frac{1}{N}}{2}%
\left\vert I_{+}\right\vert _{\widehat{\omega }}\text{ and }\left\vert
I_{+-}\right\vert _{\widehat{\omega }}=\frac{1-\frac{1}{N}}{2}\left\vert
I_{+}\right\vert _{\widehat{\omega }}\ ,
\end{equation*}%
which establishes the second line in (\ref{precisely}) for $I$, and also
that $\eta \left( I_{r,\limfunc{right}}^{\ell }\right) =\eta \left(
I_{+}\right) =\frac{1}{N}$ and $H_{\flat }\widehat{\omega }\left(
c_{I_{+}}\right) =0$. Similarly, we have the first line in (\ref{precisely})
for $I$ and $\eta \left( I_{r,\limfunc{left}}^{\ell }\right) =\frac{1}{N}$
and $H_{\flat }\widehat{\omega }\left( c_{I_{-}}\right) =0$, which completes
the proof of the inductive step.
\end{proof}

\subsection{The reweighted measures $\protect\widehat{\dot{\protect\sigma}}$
and $\protect\widehat{\protect\sigma }$}

Recall that we view the collection of intervals $\left\{ I_{j}^{k}\right\} $
as an embedded tree and use the usual parent/child terminology for this
tree. In Lemma \ref{eta is 1/N} above we showed that the pattern of
redistribution is given by 
\begin{eqnarray}
\left\vert I_{r,\limfunc{left}}^{\ell }\right\vert _{\widehat{\omega }} &=&%
\frac{1+\eta }{2}\left\vert I_{r}^{\ell }\right\vert _{\widehat{\omega }}%
\text{ and }\left\vert I_{r,\limfunc{right}}^{\ell }\right\vert _{\widehat{%
\omega }}=\frac{1-\eta }{2}\left\vert I_{r}^{\ell }\right\vert _{\widehat{%
\omega }}\text{ if }I_{r}^{\ell }=I_{s,\limfunc{left}}^{\ell -1}\ ,
\label{pattern} \\
\left\vert I_{r,\limfunc{left}}^{\ell }\right\vert _{\widehat{\omega }} &=&%
\frac{1-\eta }{2}\left\vert I_{r}^{\ell }\right\vert _{\widehat{\omega }}%
\text{ and }\left\vert I_{r,\limfunc{right}}^{\ell }\right\vert _{\widehat{%
\omega }}=\frac{1+\eta }{2}\left\vert I_{r}^{\ell }\right\vert _{\widehat{%
\omega }}\text{ if }I_{r}^{\ell }=I_{s,\limfunc{right}}^{\ell -1}\ ,  \notag
\end{eqnarray}%
where $I_{s}^{\ell -1}=\pi I_{r}^{\ell }$ is the parent of $I_{r}^{\ell }$
and where $\eta =\frac{1}{N}$. Thus we see that on the children of $%
I_{r}^{\ell }$ the measure $\widehat{\omega }$ is redistributed \textbf{away
from the center of the parent of }$I_{r}^{\ell }$, where the factors $\frac{%
1\pm \eta }{2}$ can be remembered via the mnemonic 4-tuple%
\begin{equation}
{\LARGE +,-,-,+.}  \label{mnemonic}
\end{equation}%
As a consequence of this redistribution, we see that the measures $%
\left\vert I_{r}^{\ell }\right\vert _{\widehat{\omega }}$ of the intervals $%
I_{r}^{\ell }$ at level $\ell $ are equal when $\ell =1$ and thereafter take
on the values 
\begin{equation}
\left\vert I_{r}^{\ell }\right\vert _{\widehat{\omega }}=\frac{1}{2}\left( 
\frac{1+\eta }{2}\right) ^{H}\left( \frac{1-\eta }{2}\right) ^{T}=\left(
1+\eta \right) ^{H}\left( 1-\eta \right) ^{T}\left\vert I_{r}^{\ell
}\right\vert _{\omega },\ \ \ \ \ H+T=\ell -1,  \label{heads and tails}
\end{equation}%
where $H=H\left( r\right) $ and $T=T\left( r\right) $ depend on $r$, and can
be thought of as the number of heads and tails respectively in $\ell -1$
tosses of a fair coin.

We now define the weights $\widehat{s}_{j}^{k}$ so that with 
\begin{equation*}
\widehat{\dot{\sigma}}\equiv \sum_{k,j}\widehat{s}_{j}^{k}\delta _{{\dot{z}}%
_{j}^{k}}
\end{equation*}%
we have%
\begin{equation}
\frac{\left\vert I_{j}^{k}\right\vert _{\widehat{\omega }}\widehat{s}_{j}^{k}%
}{N^{-2k}}=\frac{\left\vert I_{j}^{k}\right\vert _{\widehat{\omega }%
}\left\vert G_{j}^{k}\right\vert _{\widehat{\dot{\sigma}}}}{\left\vert
I_{j}^{k}\right\vert ^{2}}=1.  \label{def s hat}
\end{equation}%
Then we replace the point mass $\delta _{{\dot{z}}_{j}^{k}}$ at ${\dot{z}}%
_{j}^{k}$ in the definition of $\widehat{\dot{\sigma}}$ with the
approximation $\frac{1}{\left\vert L_{j}^{k}\right\vert }\mathbf{1}%
_{L_{j}^{k}}$ and define the resulting reweighted measure $\widehat{{\sigma }%
}$ by%
\begin{equation*}
\widehat{{\sigma }}\equiv \sum_{k,j}\widehat{s}_{j}^{k}\frac{1}{\left\vert
L_{j}^{k}\right\vert }\mathbf{1}_{L_{j}^{k}}.
\end{equation*}%
We now investigate properties of the measure pairs $\left( \widehat{\dot{%
\sigma}},\widehat{\omega }\right) $ and $\left( \widehat{\sigma },\widehat{%
\omega }\right) $ relative to the flattened Hilbert transform $H_{\flat }$.

\section{Testing and side conditions}

In this section we establish the Muckenhoupt/NTV $\mathcal{A}_{2}$
conditions for $\left( \widehat{\dot{\sigma}},\widehat{\omega }\right) $, as
well as the testing conditions for $H_{\flat }$ relative to the weight pair $%
\left( \widehat{\dot{\sigma}},\widehat{\omega }\right) $. Then we establish
both the forward and backward energy conditions for the weight pair $\left( 
\widehat{\dot{\sigma}},\widehat{\omega }\right) $, and use our $T1$ theorem
in \cite{SaShUr7} to conclude that the two weight norm inequality holds for
the flattened Hilbert transform $H_{\flat }$ relative to the weight pair $%
\left( \widehat{\dot{\sigma}},\widehat{\omega }\right) $. For the
convenience of the reader we recall the relevant $1$-dimensional version of
Theorem 2.6 from \cite{SaShUr7}.

\begin{theorem}
\label{T1 theorem}Suppose that $T$ is a standard Calder\'{o}n-Zygmund
operator on the real line $\mathbb{R}$, and that $\omega $ and $\sigma $ are
positive Borel measures on $\mathbb{R}$ without common point masses. Set $%
T_{\sigma }f=T\left( f\sigma \right) $ for any smooth truncation of $%
T_{\sigma }$.

\begin{enumerate}
\item The operator $T_{\sigma }$ is bounded from $L^{2}\left( \sigma \right) 
$ to $L^{2}\left( \omega \right) $, i.e. 
\begin{equation}
\left\Vert T_{\sigma }f\right\Vert _{L^{2}\left( \omega \right) }\leq 
\mathfrak{N}_{T_{\sigma }}\left\Vert f\right\Vert _{L^{2}\left( \sigma
\right) },  \label{two weight}
\end{equation}%
uniformly in smooth truncations of $T$, and moreover%
\begin{equation*}
\mathfrak{N}_{T_{\sigma }}\leq C_{\alpha }\left( \sqrt{\mathcal{A}_{2}+%
\mathcal{A}_{2}^{\ast }}+\mathfrak{T}_{T}+\mathfrak{T}_{T}^{\ast }+\mathcal{E%
}+\mathcal{E}^{\ast }+\mathcal{WBP}_{T}\right) ,
\end{equation*}%
provided that the two dual $\mathcal{A}_{2}$ conditions (\ref{A2}) hold; and
the two dual testing conditions for $T$ and $T^{\ast }$ hold,%
\begin{eqnarray}
\int_{I}\lvert T(\mathbf{1}_{I}\sigma )\rvert ^{2}\;\omega (dx) &\leq &%
\mathfrak{T}^{2}\left\vert I\right\vert _{\sigma }\,,\ \ \ \ \ \ \ \ \ \ \ \
\ \ \ \ \ \ \ \ \left( \mathfrak{T}\right) ,  \label{T testing} \\
\int_{I}\lvert T(\mathbf{1}_{I}\omega )\rvert ^{2}\;\sigma (dx) &\leq &(%
\mathfrak{T}^{\ast })^{2}\left\vert I\right\vert _{\omega }\,,\ \ \ \ \ \ \
\ \ \ \ \ \ \ \ \ \ \ \ \ \left( \mathfrak{T}^{\ast }\right) ,  \notag
\end{eqnarray}%
for all intervals $I$; the weak boundedness property for $T$ holds,%
\begin{equation*}
\left\vert \int_{J}T\left( \mathbf{1}_{I}{\sigma }\right) d\omega
\right\vert \leq \mathcal{WBP}_{T}\left( {\sigma },\omega \right) \sqrt{%
\left\vert J\right\vert _{\omega }\left\vert I\right\vert _{{\sigma }}},
\end{equation*}%
for all intervals $I,J\,$with $J\subset 3I$ and $I\subset 3J$; and provided
that the two dual energy conditions (\ref{energy condition}) and (\ref{dual
energy condition}) hold.

\item Conversely, suppose that $T$ is a Calder\'{o}n-Zygmund operator with
standard kernel $K$ and that in addition, there is $c>0$ such that 
\begin{equation}
\left\vert K\left( x,x+t\right) \right\vert \geq c\left\vert t\right\vert
^{-n},\ \ \ \ \ t\in \mathbb{R}.  \label{Ktalpha}
\end{equation}%
Furthermore, assume that $T$ is bounded from $L^{2}\left( \sigma \right) $
to $L^{2}\left( \omega \right) $, 
\begin{equation*}
\left\Vert T_{\sigma }f\right\Vert _{L^{2}\left( \omega \right) }\leq 
\mathfrak{N}_{T}\left\Vert f\right\Vert _{L^{2}\left( \sigma \right) }.
\end{equation*}%
Then the $\mathcal{A}_{2}$ condition holds, and moreover,%
\begin{equation*}
\sqrt{\mathcal{A}_{2}+\mathcal{A}_{2}^{\ast }}\leq C\mathfrak{N}_{T}.
\end{equation*}
\end{enumerate}
\end{theorem}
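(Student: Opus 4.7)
The plan is to address the two parts separately: the converse direction (2) is a standard testing-on-indicators argument exploiting the kernel size lower bound, while the sufficiency direction (1) is the substantive NTV-style proof.

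For part (2), given an interval $I=[a,a+r]$, consider the translate $I'=[a+2r,a+3r]$. The ellipticity hypothesis $|K(x,x+t)|\ge c|t|^{-1}$ forces $|T(\mathbf{1}_I\sigma)(x)|\gtrsim \sigma(I)/|I|$ for every $x\in I'$, so that the norm inequality applied to $f=\mathbf{1}_I$ yields $\sigma(I)^2\omega(I')/|I|^2\lesssim \mathfrak{N}_T^{2}\,\sigma(I)$. A telescoping sum over dyadic translates of $I$ (as in Nazarov--Treil--Volberg) combined with the dual estimate then produces the two-tailed Poisson $\mathcal{A}_2$ and $\mathcal{A}_2^\ast$ with the claimed quantitative bound.

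For part (1), I would follow the classical NTV/Lacey framework. First, introduce two independent random dyadic grids $\mathcal{D}^\sigma, \mathcal{D}^\omega$ and use the NTV random-projection argument to reduce the bilinear form $\langle T_\sigma f,g\rangle_\omega$ to its restriction to \emph{good} Haar coefficients, since the bad cubes contribute negligibly after averaging. Expand $f$ and $g$ in weighted Haar bases adapted to $\sigma$ and $\omega$, turning the form into
\begin{equation*}
\sum_{I\in\mathcal{D}^\sigma,\,J\in\mathcal{D}^\omega}\bigl\langle T_\sigma\Delta_I^\sigma f,\;\Delta_J^\omega g\bigr\rangle_\omega .
\end{equation*}
Split the sum according to the relative position and size of $I$ and $J$: the \textbf{diagonal} range (comparable nearby cubes) is handled by the weak boundedness property $\mathcal{WBP}_T$; the \textbf{separated} range (disjoint, well-separated) is absorbed by $\mathcal{A}_2$ via the standard kernel decay. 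This leaves the \textbf{nested} ranges $J\subsetneq I$ and (dually) $I\subsetneq J$.

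The nested range is where the energy hypotheses enter, and this will be the main obstacle. Perform a corona/stopping-time decomposition of $f$ relative to $\sigma$: build stopping intervals $\mathcal{F}$ so that $\mathbb{E}_I^\sigma|f|$ roughly doubles between generations. For each $F\in\mathcal{F}$ and each $J\subsetneq F$, apply a tangent-line (first-order Taylor) approximation of $K(\cdot,y)$ centered at the $\mathcal{D}^\sigma$-child of $I$ containing $J$. This produces three pieces: a \textbf{paraproduct} form, controlled directly by the forward testing constant $\mathfrak{T}_T$ applied on the stopping interval; a \textbf{neighbor} form (using the sibling child), controlled by $\mathcal{A}_2$ together with the Poisson decay; and a \textbf{stopping/error} form capturing the Taylor remainder. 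The hard part is this last piece: invoking the Monotonicity Lemma, the Taylor error on each $J$ is bounded by $\mathsf{E}(J,\omega)^2\,\mathrm{P}(J,\mathbf{1}_F\sigma)^2\,\omega(J)$, and then the energy condition $(\mathcal{E})$ applied to the pairwise disjoint maximal subintervals of each $F$ gives a quasi-orthogonal estimate summing to $\mathcal{E}^2\|f\|_{L^2(\sigma)}^2\|g\|_{L^2(\omega)}^2$. The symmetric range $I\subsetneq J$ is treated identically with the dual testing and $\mathcal{E}^\ast$. Summing the four contributions yields
\begin{equation*}
|\langle T_\sigma f,g\rangle_\omega|\lesssim \bigl(\sqrt{\mathcal{A}_2+\mathcal{A}_2^\ast}+\mathfrak{T}_T+\mathfrak{T}_T^\ast+\mathcal{E}+\mathcal{E}^\ast+\mathcal{WBP}_T\bigr)\|f\|_{L^2(\sigma)}\|g\|_{L^2(\omega)},
\end{equation*}
which is the claimed bound on $\mathfrak{N}_{T_\sigma}$, uniformly in the truncation (the truncation-independence being recovered from the $\mathcal{A}_2$ conditions as indicated in the paragraph preceding the theorem).
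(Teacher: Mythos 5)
The paper does not prove Theorem \ref{T1 theorem}; it is recalled verbatim from prior work (``For the convenience of the reader we recall the relevant $1$-dimensional version of Theorem 2.6 from \cite{SaShUr7}''), so there is no in-paper argument against which to check your proposal step by step. That said, your outline does describe the approach of the cited reference: for part (2), the standard translate-and-ellipticity argument telescoped over dyadic scales to recover the Poisson-tailed $\mathcal{A}_2$ and $\mathcal{A}_2^\ast$; for part (1), the NTV good/bad reduction, weighted Haar expansion, and the diagonal/separated/nested trichotomy controlled respectively by $\mathcal{WBP}_T$, by $\mathcal{A}_2$, and by a corona stopping-time decomposition with paraproduct (testing), neighbor ($\mathcal{A}_2$), and stopping pieces. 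One caveat: your account of the stopping/error piece as a single application of the Monotonicity Lemma followed by the energy condition $(\mathcal{E})$ compresses the hardest step in \cite{SaShUr7}. There the stopping form is not dispatched by $(\mathcal{E})$ directly; one first establishes a \emph{functional energy} inequality (deduced from the energy and Muckenhoupt conditions) and then runs a recursive quasi-orthogonality argument to control off-diagonal interactions among stopping intervals. Applying $(\mathcal{E})$ naively to the Taylor remainders on each $J$ would not close the estimate.
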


Next we must extend the boundedness of the the flattened Hilbert transform $%
H_{\flat }$ to the weight pair $\left( \widehat{\sigma },\widehat{\omega }%
\right) $ as well. For this we invoke the `flatness' of the kernel of $%
H_{\flat }$. In fact, using that $H_{\flat }\left( g\widehat{\omega }\right) 
$ is constant on the support of $\widehat{{\sigma }}$ for all $g\in
L^{2}\left( \widehat{\omega }\right) $, and that the support of $\widehat{{%
\sigma }}$ contains the support of $\widehat{{\dot{\sigma}}}$, we will
conclude that 
\begin{equation*}
\int \left\vert H_{\flat }\left( g\widehat{\omega }\right) \right\vert ^{2}d%
\widehat{{\sigma }}=\int \left\vert H_{\flat }\left( g\widehat{\omega }%
\right) \right\vert ^{2}d\widehat{{\dot{\sigma}}},
\end{equation*}%
thus obtaining the norm inequality for $H_{\flat }$ relative to the weight
pair $\left( \widehat{{\sigma }},\widehat{\omega }\right) $.

Finally we will show that the backward energy condition fails for the weight
pair $\left( \widehat{{\sigma }},\widehat{\omega }\right) $ since the
support of $\widehat{{\sigma }}$ consists of a countable union of \emph{%
intervals} rather than \emph{point masses} as is the case for the measure $%
\widehat{{\dot{\sigma}}}$. It is the fact that the energy of a measure on an
interval is positive provided the measure has positive density on the
interval, that accounts for the failure of the backward energy condition.
And it is the fact that the flattened Hilbert transform of $\widehat{\omega }
$ \emph{vanishes} on the support of $\widehat{{\sigma }}$ that permits the
backward testing condition to hold \textbf{despite} the failure of the
backward energy condition. \emph{This is the key reason for the failure of
the energy condition to be necessary for the norm inequality of an elliptic
singular integral.}

We can of course view this one-dimensional measure pair as living in the
plane, and then the flattened Hilbert transform is the restriction to the $x$%
-axis of a suitable flattening $R_{1\flat }$ of the Riesz transform $R_{1}$.
Thus the vector $\mathbf{R}=\left( R_{1\flat },R_{2}\right) $ is a strongly
elliptic singular integral that satisfies the norm inequality relative to $%
\left( \widehat{{\sigma }},\widehat{\omega }\right) $ but fails the backward
energy condition, thus extending the failure of necessity of energy
conditions to higher dimensions. One can easily extend this failure to hold
for flattened fractional Riesz transforms $\mathbf{R}^{\alpha ,n}$ in the
Euclidean space $\mathbb{R}^{n}$ as well.

\subsection{The Muckenhoupt/NTV condition}

We now verify the two-tailed Muckenhoupt/NTV condition $\mathcal{A}_{2}$ for
the weight pair $\left( \widehat{\dot{\sigma}},\widehat{\omega }\right) $.
We fix an interval $I_{r}^{\ell }$ at level $\ell $. Recall that $\eta =%
\frac{1}{N}$ by Lemma \ref{eta is 1/N}. Consider the two-tailed Muckenhoupt
condition $\mathcal{A}_{2}$. We write%
\begin{equation*}
\mathrm{P}\left( Q,\mu \right) =\int_{\mathbb{R}}\frac{\left\vert
Q\right\vert }{\left( \left\vert Q\right\vert +\left\vert y-c_{Q}\right\vert
\right) ^{2}}d\mu \left( y\right) =\int_{\mathbb{R}}\frac{1}{\left\vert
Q\right\vert }\frac{1}{\left( 1+\frac{\left\vert y-c_{Q}\right\vert }{%
\left\vert Q\right\vert }\right) ^{2}}d\mu \left( y\right) ,
\end{equation*}%
where $c_{Q}$ is the center of the interval $Q$, and we will use the
pointwise estimates%
\begin{eqnarray}
\frac{1}{\left\vert Q\right\vert }\frac{1}{\left( 1+\frac{\left\vert
y-c_{Q}\right\vert }{\left\vert Q\right\vert }\right) ^{2}} &\approx
&\sum_{k=0}^{\infty }\frac{1}{N^{k}}\frac{1}{\left\vert N^{k}Q\right\vert }%
\mathbf{1}_{N^{k}Q}\left( y\right)  \label{pointwise} \\
&\approx &\frac{1}{\left\vert Q\right\vert }\mathbf{1}_{Q}\left( y\right)
+\sum_{k=1}^{\infty }\frac{1}{N^{k}}\frac{1}{\left\vert N^{k}Q\right\vert }%
\mathbf{1}_{N^{k}Q\setminus N^{k-1}Q}\left( y\right) ,  \notag
\end{eqnarray}%
where the implied constants in $\approx $ depend only on $N$. We compute
both $\mathrm{P}\left( I_{r}^{\ell },\widehat{\omega }\right) $ and $\mathrm{%
P}\left( I_{r}^{\ell },\widehat{\dot{\sigma}}\right) $, beginning with the
simpler factor $\mathrm{P}\left( I_{r}^{\ell },\widehat{\omega }\right) $.

Using the second line in (\ref{pointwise}) we have%
\begin{equation*}
\mathrm{P}\left( I_{r}^{\ell },\widehat{\omega }\right) \approx \frac{%
\left\vert I_{r}^{\ell }\right\vert _{\widehat{\omega }}}{\left\vert
I_{r}^{\ell }\right\vert }+\frac{1}{N^{2}}\frac{\left\vert \theta
I_{r}^{\ell }\right\vert _{\widehat{\omega }}}{\left\vert I_{r}^{\ell
}\right\vert }+\frac{1}{N^{4}}\frac{\left\vert \theta \pi I_{r}^{\ell
}\right\vert _{\widehat{\omega }}}{\left\vert I_{r}^{\ell }\right\vert }+%
\frac{1}{N^{6}}\frac{\left\vert \theta \pi ^{\left( 2\right) }I_{r}^{\ell
}\right\vert _{\widehat{\omega }}}{\left\vert I_{r}^{\ell }\right\vert }+...
\end{equation*}%
Our construction above gives the inequalities 
\begin{equation}
\left\vert \theta \pi ^{\left( k\right) }I_{r}^{\ell }\right\vert _{\widehat{%
\omega }}\leq \frac{1+\eta }{1-\eta }\left\vert \pi ^{\left( k\right)
}I_{r}^{\ell }\right\vert _{\widehat{\omega }}\text{ and }\left\vert \pi
^{\left( k+1\right) }I_{r}^{\ell }\right\vert _{\widehat{\omega }}\leq \frac{%
2}{1-\eta }\left\vert \pi ^{\left( k\right) }I_{r}^{\ell }\right\vert _{%
\widehat{\omega }}\ ,\ \ \ \ \ k\geq 0,  \label{inequalities parents}
\end{equation}%
and these inequalities show that for $N\geq 4$ we have%
\begin{eqnarray*}
\mathrm{P}\left( I_{r}^{\ell },\widehat{\omega }\right) &\lesssim &\frac{%
\left\vert I_{r}^{\ell }\right\vert _{\widehat{\omega }}}{\left\vert
I_{r}^{\ell }\right\vert }\left\{ 1+\frac{1}{N^{2}}\left( \frac{1+\eta }{%
1-\eta }\right) +\frac{1}{N^{4}}\left( \frac{1+\eta }{1-\eta }\right) \left( 
\frac{2}{1-\eta }\right) +\frac{1}{N^{6}}\left( \frac{1+\eta }{1-\eta }%
\right) \left( \frac{2}{1-\eta }\right) ^{2}+...\right\} \\
&=&\frac{\left\vert I_{r}^{\ell }\right\vert _{\widehat{\omega }}}{%
\left\vert I_{r}^{\ell }\right\vert }\left\{ 1+\frac{1}{N^{2}}\left( \frac{%
1+\eta }{1-\eta }\right) \left[ 1+\frac{1}{N^{2}}\left( \frac{2}{1-\eta }%
\right) +\frac{1}{N^{4}}\left( \frac{2}{1-\eta }\right) ^{2}+...\right]
\right\} \\
&=&\frac{\left\vert I_{r}^{\ell }\right\vert _{\widehat{\omega }}}{%
\left\vert I_{r}^{\ell }\right\vert }\left\{ 1+\frac{1}{N^{2}}\left( \frac{%
1+\eta }{1-\eta }\right) \frac{1}{1-\frac{1}{N^{2}}\left( \frac{2}{1-\eta }%
\right) }\right\} =\frac{\left\vert I_{r}^{\ell }\right\vert _{\widehat{%
\omega }}}{\left\vert I_{r}^{\ell }\right\vert }\left\{ 1+\frac{1}{N^{2}}%
\frac{N+1}{N-1-\frac{2}{N}}\right\} ,
\end{eqnarray*}%
and hence that%
\begin{equation*}
\mathrm{P}\left( I_{r}^{\ell },\widehat{\omega }\right) \approx \frac{%
\left\vert I_{r}^{\ell }\right\vert _{\widehat{\omega }}}{\left\vert
I_{r}^{\ell }\right\vert }.
\end{equation*}

Similarly, using the first line in (\ref{pointwise}), we have%
\begin{equation}
\mathrm{P}\left( I_{r}^{\ell },\widehat{\dot{\sigma}}\right) \approx \frac{%
\left\vert I_{r}^{\ell }\right\vert _{\widehat{\dot{\sigma}}}}{\left\vert
I_{r}^{\ell }\right\vert }+\frac{1}{N^{2}}\frac{\left\vert \pi I_{r}^{\ell
}\right\vert _{\widehat{\dot{\sigma}}}}{\left\vert I_{r}^{\ell }\right\vert }%
+\frac{1}{N^{4}}\frac{\left\vert \pi ^{\left( 2\right) }I_{r}^{\ell
}\right\vert _{\widehat{\dot{\sigma}}}}{\left\vert I_{r}^{\ell }\right\vert }%
+\frac{1}{N^{6}}\frac{\left\vert \pi ^{\left( 3\right) }I_{r}^{\ell
}\right\vert _{\widehat{\dot{\sigma}}}}{\left\vert I_{r}^{\ell }\right\vert }%
+...  \label{sim}
\end{equation}%
Now we show that $\left\vert I_{r}^{\ell }\right\vert _{\widehat{\dot{\sigma}%
}}$ is comparable to $\left\vert G_{r}^{\ell }\right\vert _{\widehat{\dot{%
\sigma}}}=\widehat{s}_{r}^{\ell }$. For this it will be convenient to denote 
$G_{r}^{\ell }$ by $G\left( I_{r}^{\ell }\right) $ and $\widehat{s}%
_{r}^{\ell }$ by $\widehat{s}\left( I_{r}^{\ell }\right) $ in order to use
the notations $G\left( \pi ^{\left( k\right) }I_{r}^{\ell }\right) $ and $%
\widehat{s}\left( \pi ^{\left( k\right) }I_{r}^{\ell }\right) $.

\begin{lemma}
\label{sigma dot}For $N\geq 16$ we have%
\begin{equation*}
\left\vert I_{r}^{\ell }\right\vert _{\widehat{\dot{\sigma}}}=\frac{N^{2}-1}{%
N^{2}-5}\left\vert G_{r}^{\ell }\right\vert _{\widehat{\dot{\sigma}}}\text{
for all }\ell \geq 1\text{ and }1\leq r\leq 2^{\ell }.
\end{equation*}
\end{lemma}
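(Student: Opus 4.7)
The plan is to expand $|I_r^\ell|_{\widehat{\dot{\sigma}}}$ as a sum over the point masses it contains---one for each dyadic descendant of $I_r^\ell$---and then collect by depth to obtain a geometric series whose ratio is governed by the uniform split rule from Lemma~\ref{eta is 1/N}.

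First I would note that the point masses of $\widehat{\dot{\sigma}}$ lying inside $I_r^\ell$ are exactly the centers $\dot{z}_j^k$ for $I_j^k \subseteq I_r^\ell$, and each carries weight $\widehat{s}_j^k = N^{-2k}/|I_j^k|_{\widehat{\omega}}$ by (\ref{def s hat}). Grouping by depth $m$ below $I_r^\ell$ and re-indexing gives
\begin{equation*}
|I_r^\ell|_{\widehat{\dot{\sigma}}} = N^{-2\ell}\sum_{m=0}^{\infty} N^{-2m}\,T_m(I_r^\ell), \qquad T_m(I):=\sum_{\substack{J\subseteq I \\ J \text{ at depth } m \text{ below } I}}\frac{1}{|J|_{\widehat{\omega}}}.
\end{equation*}
The heart of the argument is the identity $T_m(I)= |I|_{\widehat{\omega}}^{-1}(4/(1-\eta^2))^m$ for every non-root $I\in\mathcal{D}$ and $m\geq 0$, which I would prove by induction on $m$. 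The base case $m=0$ is immediate. The inductive step rests on the observation that Lemma~\ref{eta is 1/N} gives, for every non-root $J\in\mathcal{D}$, the unordered pair $\{|J_-|_{\widehat{\omega}},|J_+|_{\widehat{\omega}}\}=\{\tfrac{1+\eta}{2}|J|_{\widehat{\omega}}, \tfrac{1-\eta}{2}|J|_{\widehat{\omega}}\}$; the order depends on the position of $J$, but, crucially, the reciprocal sum
\begin{equation*}
\frac{1}{|J_-|_{\widehat{\omega}}} + \frac{1}{|J_+|_{\widehat{\omega}}} = \frac{4}{(1-\eta^2)|J|_{\widehat{\omega}}}
\end{equation*}
is position-independent, which is exactly what drives the recursion $T_{m+1}(I) = \frac{4}{1-\eta^2}\,T_m(I)$.

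With the identity in hand, substituting $\eta = 1/N$ gives $1-\eta^2 = (N^2-1)/N^2$, and summing the geometric series yields
\begin{equation*}
|I_r^\ell|_{\widehat{\dot{\sigma}}} = \frac{N^{-2\ell}}{|I_r^\ell|_{\widehat{\omega}}}\sum_{m=0}^{\infty}\left(\frac{4}{N^2-1}\right)^m = \frac{N^{-2\ell}}{|I_r^\ell|_{\widehat{\omega}}}\cdot\frac{N^2-1}{N^2-5},
\end{equation*}
which converges since $N\geq 16$. Comparing with $|G_r^\ell|_{\widehat{\dot{\sigma}}}=\widehat{s}_r^\ell = N^{-2\ell}/|I_r^\ell|_{\widehat{\omega}}$ from (\ref{def s hat}) yields the claim. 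The only point demanding care is that the split identity requires $J$ to be non-root; but every $J$ entering the recursion is a descendant of $I_r^\ell$ with $\ell\geq 1$, hence non-root, so there is no real obstacle. The proof is essentially a clean bookkeeping computation exploiting the remarkable position-independence of the harmonic quantity $|J_-|_{\widehat{\omega}}^{-1}+|J_+|_{\widehat{\omega}}^{-1}$ for non-root $J$.
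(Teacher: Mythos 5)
Your proposal is correct and takes essentially the same route as the paper: both expand $|I_r^\ell|_{\widehat{\dot{\sigma}}}$ over the descendants of $I_r^\ell$, group by depth, and exploit the position-independent identity $|J_-|_{\widehat{\omega}}^{-1}+|J_+|_{\widehat{\omega}}^{-1}=\tfrac{4}{(1-\eta^2)}|J|_{\widehat{\omega}}^{-1}$ to obtain a geometric series with ratio $\tfrac{4}{N^2(1-\eta^2)}=\tfrac{4}{N^2-1}$. The only cosmetic difference is that you isolate the level sums $T_m$ and prove a closed form by induction, whereas the paper telescopes the same recursion in place; your explicit note that the identity requires $J$ non-root (automatic here since $\ell\geq 1$) is a worthwhile clarification the paper leaves implicit.
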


\begin{proof}
Let $I=I_{r}^{\ell }\in \mathcal{D}$ with $\ell \geq 1$ and write $\widehat{s%
}\left( I\right) =\widehat{s}\left( I_{r}^{\ell }\right) \equiv \widehat{s}%
_{r}^{\ell }$. Recall the definition of $\widehat{s}_{r}^{\ell }=\left\vert
G_{r}^{\ell }\right\vert _{\widehat{\dot{\sigma}}}$ is given by $\left\vert
I_{r}^{\ell }\right\vert _{\widehat{\omega }}\left\vert G_{r}^{\ell
}\right\vert _{\widehat{\dot{\sigma}}}=\left\vert I_{r}^{\ell }\right\vert
^{2}$, and so we have%
\begin{eqnarray*}
\left\vert I\right\vert _{\widehat{\dot{\sigma}}} &=&\widehat{s}\left(
I\right) +\left\{ \widehat{s}\left( I_{-}\right) +\widehat{s}\left(
I_{+}\right) \right\} +\left\{ \widehat{s}\left( I_{--}\right) +\widehat{s}%
\left( I_{-+}\right) +\widehat{s}\left( I_{+-}\right) +\widehat{s}\left(
I_{++}\right) \right\} +... \\
&=&\frac{\left\vert I\right\vert ^{2}}{\left\vert I\right\vert _{\widehat{%
\omega }}}+\left\{ \frac{\left\vert I_{-}\right\vert ^{2}}{\left\vert
I_{-}\right\vert _{\widehat{\omega }}}+\frac{\left\vert I_{+}\right\vert ^{2}%
}{\left\vert I_{+}\right\vert _{\widehat{\omega }}}\right\} +\left\{ \frac{%
\left\vert I_{--}\right\vert ^{2}}{\left\vert I_{--}\right\vert _{\widehat{%
\omega }}}+\frac{\left\vert I_{-+}\right\vert ^{2}}{\left\vert
I_{-+}\right\vert _{\widehat{\omega }}}+\frac{\left\vert I_{+-}\right\vert
^{2}}{\left\vert I_{+-}\right\vert _{\widehat{\omega }}}+\frac{\left\vert
I_{++}\right\vert ^{2}}{\left\vert I_{++}\right\vert _{\widehat{\omega }}}%
\right\} +...
\end{eqnarray*}%
and if we combine $\left\vert J_{+}\right\vert _{\widehat{\omega }}=\frac{%
1\pm \eta }{2}\left\vert J\right\vert _{\widehat{\omega }}$ and $\left\vert
J_{-}\right\vert _{\widehat{\omega }}=\frac{1\mp \eta }{2}\left\vert
J\right\vert _{\widehat{\omega }}$ with 
\begin{equation*}
\frac{\left\vert J\right\vert _{\widehat{\omega }}}{\left\vert
J_{-}\right\vert _{\widehat{\omega }}}+\frac{\left\vert J\right\vert _{%
\widehat{\omega }}}{\left\vert J_{+}\right\vert _{\widehat{\omega }}}=\frac{2%
}{1+\eta }+\frac{2}{1-\eta }=\frac{4}{1-\eta ^{2}}\text{ and }\left\vert
I_{\pm }\right\vert ^{2}=\frac{\left\vert I\right\vert ^{2}}{N^{2}},
\end{equation*}%
we obtain%
\begin{eqnarray*}
\left\vert I_{r}^{\ell }\right\vert _{\widehat{\dot{\sigma}}} &=&\left\vert
I\right\vert _{\widehat{\dot{\sigma}}}=\frac{\left\vert I\right\vert ^{2}}{%
\left\vert I\right\vert _{\widehat{\omega }}}+\frac{\left\vert I\right\vert
^{2}}{\left\vert I\right\vert _{\widehat{\omega }}}\left\{ \frac{4}{%
N^{2}\left( 1-\eta ^{2}\right) }\right\} +\left\{ \frac{\left\vert
I_{-}\right\vert ^{2}}{\left\vert I_{-}\right\vert _{\widehat{\omega }}}%
\left( \frac{4}{N^{2}\left( 1-\eta ^{2}\right) }\right) +\frac{\left\vert
I_{+}\right\vert ^{2}}{\left\vert I_{+}\right\vert _{\widehat{\omega }}}%
\left( \frac{4}{N^{2}\left( 1-\eta ^{2}\right) }\right) \right\} +... \\
&=&\frac{\left\vert I\right\vert ^{2}}{\left\vert I\right\vert _{\widehat{%
\omega }}}+\frac{\left\vert I\right\vert ^{2}}{\left\vert I\right\vert _{%
\widehat{\omega }}}\left\{ \frac{4}{N^{2}\left( 1-\eta ^{2}\right) }\right\}
+\left\{ \left( \frac{\left\vert I_{-}\right\vert ^{2}}{\left\vert
I_{-}\right\vert _{\widehat{\omega }}}+\frac{\left\vert I_{+}\right\vert ^{2}%
}{\left\vert I_{+}\right\vert _{\widehat{\omega }}}\right) \left( \frac{4}{%
N^{2}\left( 1-\eta ^{2}\right) }\right) \right\} +... \\
&=&\frac{\left\vert I\right\vert ^{2}}{\left\vert I\right\vert _{\widehat{%
\omega }}}\left[ 1+\left\{ \frac{4}{N^{2}\left( 1-\eta ^{2}\right) }\right\}
+\left\{ \left( \frac{4}{N^{2}\left( 1-\eta ^{2}\right) }\right)
^{2}\right\} +...\right] \\
&=&\frac{\left\vert I\right\vert ^{2}}{\left\vert I\right\vert _{\widehat{%
\omega }}}\left( \frac{1}{1-\frac{4}{N^{2}\left( 1-\frac{1}{N^{2}}\right) }}%
\right) =\widehat{s}\left( I\right) \left( \frac{N^{2}-1}{N^{2}-5}\right) =%
\frac{N^{2}-1}{N^{2}-5}\left\vert G_{r}^{\ell }\right\vert _{\widehat{\dot{%
\sigma}}}.
\end{eqnarray*}
\end{proof}

From the definition of $\widehat{s}_{r}^{\ell }=\left\vert G_{r}^{\ell
}\right\vert _{\widehat{\dot{\sigma}}}$ given by $\left\vert I_{r}^{\ell
}\right\vert _{\widehat{\omega }}\left\vert G_{r}^{\ell }\right\vert _{%
\widehat{\dot{\sigma}}}=\left\vert I_{r}^{\ell }\right\vert ^{2}$ once more,
and the inequality $\left\vert \pi ^{\left( j\right) }I_{r}^{\ell
}\right\vert _{\widehat{\omega }}\leq \frac{1+\eta }{2}\left\vert \pi
^{\left( j+1\right) }I_{r}^{\ell }\right\vert _{\widehat{\omega }}$, we
obtain:%
\begin{eqnarray}
\left\vert G\left( \pi ^{\left( k\right) }I_{r}^{\ell }\right) \right\vert _{%
\widehat{\dot{\sigma}}} &=&\frac{\left\vert \pi ^{\left( k\right)
}I_{r}^{\ell }\right\vert ^{2}}{\left\vert \pi ^{\left( k\right)
}I_{r}^{\ell }\right\vert _{\widehat{\omega }}}\leq \frac{1+\eta }{2}N^{2}%
\frac{\left\vert \pi ^{\left( k-1\right) }I_{r}^{\ell }\right\vert ^{2}}{%
\left\vert \pi ^{\left( k-1\right) }I_{r}^{\ell }\right\vert _{\widehat{%
\omega }}}  \label{inequalities parents'} \\
&&\vdots  \notag \\
&\leq &\left( \frac{1+\eta }{2}N^{2}\right) ^{k}\frac{\left\vert I_{r}^{\ell
}\right\vert ^{2}}{\left\vert I_{r}^{\ell }\right\vert _{\widehat{\omega }}}%
=\left( \frac{1+\eta }{2}N^{2}\right) ^{k}\left\vert G\left( I_{r}^{\ell
}\right) \right\vert _{\widehat{\dot{\sigma}}}\ .  \notag
\end{eqnarray}%
Thus from (\ref{sim}), Lemma \ref{sigma dot}\ and (\ref{inequalities
parents'}) we have%
\begin{eqnarray*}
\mathrm{P}\left( I_{r}^{\ell },\widehat{\dot{\sigma}}\right) &\lesssim &%
\frac{\left\vert G\left( I_{r}^{\ell }\right) \right\vert _{\widehat{\dot{%
\sigma}}}}{\left\vert I_{r}^{\ell }\right\vert }+\frac{1}{N^{2}}\frac{%
\left\vert G\left( \pi I_{r}^{\ell }\right) \right\vert _{\widehat{\dot{%
\sigma}}}}{\left\vert I_{r}^{\ell }\right\vert }+\frac{1}{N^{4}}\frac{%
\left\vert G\left( \pi ^{\left( 2\right) }I_{r}^{\ell }\right) \right\vert _{%
\widehat{\dot{\sigma}}}}{\left\vert I_{r}^{\ell }\right\vert }+...+\frac{1}{%
N^{2\ell }}\frac{\left\vert G\left( \pi ^{\left( \ell \right) }I_{r}^{\ell
}\right) \right\vert _{\widehat{\dot{\sigma}}}}{\left\vert I_{r}^{\ell
}\right\vert } \\
&\lesssim &\frac{\left\vert G\left( I_{r}^{\ell }\right) \right\vert _{%
\widehat{\dot{\sigma}}}}{\left\vert I_{r}^{\ell }\right\vert }+\frac{1}{N^{2}%
}\left( \frac{1+\eta }{2}N^{2}\right) \frac{\left\vert G\left( I_{r}^{\ell
}\right) \right\vert _{\widehat{\dot{\sigma}}}}{\left\vert I_{r}^{\ell
}\right\vert }+\frac{1}{N^{4}}\left( \frac{1+\eta }{2}N^{2}\right) ^{2}\frac{%
\left\vert G\left( I_{r}^{\ell }\right) \right\vert _{\widehat{\dot{\sigma}}}%
}{\left\vert I_{r}^{\ell }\right\vert } \\
&&+...+\frac{1}{N^{2\ell }}\left( \frac{1+\eta }{2}N^{2}\right) ^{\ell }%
\frac{\left\vert G\left( I_{r}^{\ell }\right) \right\vert _{\widehat{\dot{%
\sigma}}}}{\left\vert I_{r}^{\ell }\right\vert } \\
&\lesssim &\frac{\left\vert G\left( I_{r}^{\ell }\right) \right\vert _{%
\widehat{\dot{\sigma}}}}{\left\vert I_{r}^{\ell }\right\vert }\left\{ 1+%
\frac{1}{N^{2}}\left( \frac{1+\eta }{2}N^{2}\right) +\frac{1}{N^{4}}\left( 
\frac{1+\eta }{2}N^{2}\right) ^{2}+...\right\} \\
&=&\frac{\left\vert G\left( I_{r}^{\ell }\right) \right\vert _{\widehat{\dot{%
\sigma}}}}{\left\vert I_{r}^{\ell }\right\vert }\left\{ 1+\left( \frac{%
1+\eta }{2}\right) +\left( \frac{1+\eta }{2}\right) ^{2}+...\right\} =\frac{%
\left\vert G\left( I_{r}^{\ell }\right) \right\vert _{\widehat{\dot{\sigma}}}%
}{\left\vert I_{r}^{\ell }\right\vert }\left\{ 1+\frac{1}{1-\frac{1+\eta }{2}%
}\right\} ,
\end{eqnarray*}%
and so $\mathrm{P}\left( I_{r}^{\ell },\widehat{\dot{\sigma}}\right) \approx 
\frac{\left\vert G\left( I_{r}^{\ell }\right) \right\vert _{\widehat{\dot{%
\sigma}}}}{\left\vert I_{r}^{\ell }\right\vert }$. Thus 
\begin{equation*}
\mathrm{P}\left( I_{r}^{\ell },\widehat{\omega }\right) \mathrm{P}\left(
I_{r}^{\ell },\widehat{\dot{\sigma}}\right) \approx \frac{\left\vert
I_{r}^{\ell }\right\vert _{\widehat{\omega }}\left\vert G\left( I_{r}^{\ell
}\right) \right\vert _{\widehat{\dot{\sigma}}}}{\left\vert I_{r}^{\ell
}\right\vert ^{2}}=1.
\end{equation*}%
This verifies the $\mathcal{A}_{2}$ condition for the weight pair $\left( 
\widehat{\dot{\sigma}},\widehat{\omega }\right) $ when tested over the
intervals $I_{r}^{\ell }$. This is easily seen to be enough since $\mathrm{P}
$ is a positive operator and Lebesgue measure is doubling. Indeed, given an
arbitrary interval $I\subset \left[ 0,1\right] $ that meets the support of
both measures $\widehat{\omega }$ and $\widehat{\dot{\sigma}}$, choose $\ell 
$ so that $\frac{1}{N^{\ell +1}}<\left\vert I\right\vert \leq \frac{1}{%
N^{\ell }}$ and then choose $I_{r}^{\ell }$ so that $I\cap I_{r}^{\ell }\neq
\emptyset $. Since $I\subset 3I_{r}^{\ell }$ and $\left\vert I\right\vert
\approx \left\vert 3I_{r}^{\ell }\right\vert $ (with implied constants
depending only on $N$), it is now an easy matter to show that%
\begin{eqnarray*}
\mathrm{P}\left( I,\widehat{\omega }\right) \mathrm{P}\left( I,\widehat{\dot{%
\sigma}}\right) &=&\left( \int_{\mathbb{R}}\frac{\left\vert I\right\vert }{%
\left( \left\vert I\right\vert +\left\vert x-c_{I}\right\vert \right) ^{2}}d%
\widehat{\omega }\left( x\right) \right) \left( \int_{\mathbb{R}}\frac{%
\left\vert I\right\vert }{\left( \left\vert I\right\vert +\left\vert
y-c_{I}\right\vert \right) ^{2}}d\widehat{\dot{\sigma}}\left( y\right)
\right) \\
&\lesssim &\left( \int_{\mathbb{R}}\frac{\left\vert I_{r}^{\ell }\right\vert 
}{\left( \left\vert I_{r}^{\ell }\right\vert +\left\vert x-c_{I_{r}^{\ell
}}\right\vert \right) ^{2}}d\widehat{\omega }\left( x\right) \right) \left(
\int_{\mathbb{R}}\frac{\left\vert I_{r}^{\ell }\right\vert }{\left(
\left\vert I_{r}^{\ell }\right\vert +\left\vert y-c_{I_{r}^{\ell
}}\right\vert \right) ^{2}}d\widehat{\dot{\sigma}}\left( y\right) \right) \\
&=&\mathrm{P}\left( I_{r}^{\ell },\widehat{\omega }\right) \mathrm{P}\left(
I_{r}^{\ell },\widehat{\dot{\sigma}}\right) \approx 1,
\end{eqnarray*}%
since%
\begin{equation*}
\frac{\left\vert I\right\vert }{\left( \left\vert I\right\vert +\left\vert
x-c_{I}\right\vert \right) ^{2}}\lesssim \frac{\left\vert I_{r}^{\ell
}\right\vert }{\left( \left\vert I_{r}^{\ell }\right\vert +\left\vert
x-c_{I_{r}^{\ell }}\right\vert \right) ^{2}},\ \ \ \ \ x\in \mathbb{R}.
\end{equation*}

We record the following facts proved implicitly above for future use. Note
that (\ref{heads and tails}) is used in the first line below.%
\begin{eqnarray}
\mathrm{P}\left( I_{r}^{\ell },\widehat{\omega }\right) &\approx &\frac{%
\left\vert I_{r}^{\ell }\right\vert _{\widehat{\omega }}}{\left\vert
I_{r}^{\ell }\right\vert }=\frac{1}{2}\frac{\left( \frac{1+\eta }{2}\right)
^{H\left( I_{r}^{\ell }\right) }\left( \frac{1-\eta }{2}\right) ^{T\left(
I_{r}^{\ell }\right) }}{\left\vert I_{r}^{\ell }\right\vert }=\kappa
_{r}^{\ell }\left( \frac{N}{2}\right) ^{\ell },  \label{facts for future} \\
\mathrm{P}\left( I_{r}^{\ell },\widehat{\dot{\sigma}}\right) &\approx &\frac{%
\left\vert I_{r}^{\ell }\right\vert _{\widehat{\dot{\sigma}}}}{\left\vert
I_{r}^{\ell }\right\vert }\approx \frac{1}{\kappa _{r}^{\ell }}\left( \frac{2%
}{N}\right) ^{\ell },  \notag
\end{eqnarray}%
with%
\begin{equation}
\kappa _{r}^{\ell }\equiv \left( 1+\frac{1}{N}\right) ^{H\left( I_{r}^{\ell
}\right) }\left( 1-\frac{1}{N}\right) ^{T\left( I_{r}^{\ell }\right) },
\label{def kappa}
\end{equation}%
and where $I=\left[ 0,1\right] _{\mathbf{\varepsilon }}$ with $\mathbf{%
\varepsilon }=\left( \varepsilon _{1},\varepsilon _{2},...,\varepsilon
_{d\left( I\right) }\right) \in \left\{ +,-\right\} ^{d\left( I\right) }$
and $H\left( I\right) $ (for heads) is the number of $+$ signs in $\mathbf{%
\varepsilon }^{\prime }\equiv \left( \varepsilon _{2},...,\varepsilon
_{d\left( I\right) }\right) $ and $T\left( I\right) $ (for tails) is the
number of $-$ signs in $\mathbf{\varepsilon }^{\prime }$. See immediately
below for a further discussion of the $\left[ 0,1\right] _{\mathbf{%
\varepsilon }}$-notation.

\subsection{The forward testing condition}

We will see that the forward testing condition for $H_{\flat }$ holds with
respect to the weight pair $\left( \widehat{\dot{\sigma}},\widehat{\omega }%
\right) $ because the measures $\widehat{\omega }$ and $\widehat{\dot{\sigma}%
}$ are self-similar, and the operator $H_{\flat }$ is invariant under
dilations of scale $N$. The argument here is considerably more delicate than
the corresponding argument in \cite{LaSaUr2}, involving some surprising
symmetries buried in the redistribution of the measure $\widehat{\omega }$.
In order to obtain a replicating formula for the measures $\widehat{\omega }$
and $\widehat{\dot{\sigma}}$, it is convenient to introduce the notion of a 
\emph{noncommutative addition} on an embedded dyadic tree.

\subsubsection{A noncommutative addition on the dyadic tree}

Let $\left( \mathcal{T},\succcurlyeq ,\mathfrak{r},\pi ,\mathfrak{C}\right) $
denote the dyadic tree with relation $\succcurlyeq $, root $\mathfrak{r}$,
parent $\pi :\mathcal{T\setminus }\left\{ \mathfrak{r}\right\} \rightarrow 
\mathcal{T}$ and children $\mathfrak{C}:\mathcal{T}\rightarrow \mathcal{T}%
\times \mathcal{T}$ as described earlier. Consider the following three
examples of trees defined in terms of our construction of intervals $%
I_{r}^{\ell }$\ above, and with partial order defined in terms of set
inclusion: 
\begin{eqnarray*}
\mathcal{D} &=&\left\{ I_{r}^{\ell }:\left( \ell ,r\right) \in \mathbb{Z}%
_{+}\times \mathbb{N},1\leq r\leq 2^{\ell }\right\} \sim \left\{ \left( \ell
,r\right) \in \mathbb{Z}_{+}\times \mathbb{N}:1\leq r\leq 2^{\ell }\right\} ,
\\
\mathcal{D}_{-} &=&\left\{ I_{r}^{\ell }:\left( \ell ,r\right) \in \mathbb{N}%
\times \mathbb{N},1\leq r\leq 2^{\ell -1}\right\} \sim \left\{ \left( \ell
,r\right) \in \mathbb{N}\times \mathbb{N}:1\leq r\leq 2^{\ell -1}\right\} ,
\\
\mathcal{D}_{+} &=&\left\{ I_{r}^{\ell }:\left( \ell ,r\right) \in \mathbb{N}%
\times \mathbb{N},2^{\ell -1}<r\leq 2^{\ell }\right\} \sim \left\{ \left(
\ell ,r\right) \in \mathbb{N}\times \mathbb{N}:2^{\ell -1}<r\leq 2^{\ell
}\right\} .
\end{eqnarray*}

In order to model dilations and translations of our measures on these dyadic
trees, we define a noncommutative addition $\oplus :\mathcal{T}\times 
\mathcal{T}\rightarrow \mathcal{T}$ in an embedded tree $\mathcal{T}$ as
follows. Fix $\alpha ,\beta \in \mathcal{T}$. Then if $\mathbf{\varepsilon }%
\in \left\{ +,-\right\} ^{d\left( \beta \right) }$ is the unique string of $%
\pm $ such that $\beta =\left( \mathfrak{r}\right) _{\varepsilon }$, we
define $\alpha \oplus \beta =\alpha _{\varepsilon }$. We can describe this
addition more informally when we view $\mathcal{T}$ as embedded in the
plane. Indeed, let $\mathcal{S}$ be a copy of $\mathcal{T}$ in the plane and
translate (in the plane) the root $\mathfrak{r}_{\mathcal{S}}$ of $\mathcal{S%
}$ to lie on top of the point $\alpha $ in the tree $\mathcal{T}$, and let
the remaining points of $\mathcal{S}$ fall on\ the corresponding points of $%
\mathcal{T}$ lying below $\alpha $ (for this we `dilate' the translate of
the copy $\mathcal{S}$ so as to fit overtop $\mathcal{T}$). Then the point $%
\alpha \oplus \beta $\ is the point in the tree $\mathcal{T}$ that lies
underneath the point $\beta $ in the tree $\mathcal{S}$. Finally we define
the translation $\beta \oplus \mathcal{T}$ of the tree $\mathcal{T}$ by a
point $\beta \in \mathcal{T}$ on the left to be the tree 
\begin{equation*}
\beta \oplus \mathcal{T}\equiv \left\{ \beta \oplus \alpha :\alpha \in 
\mathcal{T}\right\}
\end{equation*}%
equipped with the inherited structure from $\left( \mathcal{T},\succcurlyeq ,%
\mathfrak{r},\pi ,\mathfrak{C}\right) $. For example, the root of the tree $%
\beta \oplus \mathcal{T}$ is $\mathfrak{r}\left( \beta \oplus \mathcal{T}%
\right) =\beta $ and the $\beta \oplus \mathcal{T}$-children of $\beta
\oplus \alpha $ are $\beta \oplus \alpha _{\pm }$.

\subsubsection{Self-similarity}

We first focus on the left hand child $I_{1}^{1}$ of $I_{1}^{0}=\left[ 0,1%
\right] $ and recall that $\left\vert I_{1}^{1}\right\vert _{\widehat{\omega 
}}=\left\vert I_{2}^{1}\right\vert _{\widehat{\omega }}$. To each interval $%
I_{r}^{\ell }$ we attach the increment $\eta =\eta \left( I_{r}^{\ell
}\right) =\frac{1}{N}$ in the formula (\ref{redistribution}) for the
distribution of $\widehat{\omega }$ to its two children $I_{r,\limfunc{left}%
}^{\ell }=\left( I_{r}^{\ell }\right) _{-}$ and $I_{r,\limfunc{right}}^{\ell
}=\left( I_{r}^{\ell }\right) _{+}$, i.e. by multiplying with the factors $%
\frac{1\pm \eta \left( I_{r}^{\ell }\right) }{2}$ on the left and right
appropriately. Recall from (\ref{precisely}) that for any interval $I\in 
\mathcal{D}\setminus \left\{ \left[ 0,1\right] \right\} $,%
\begin{eqnarray*}
\left\vert I_{--}\right\vert _{\widehat{\omega }} &=&\frac{1+\eta }{2}%
\left\vert I_{-}\right\vert _{\widehat{\omega }}\text{ and }\left\vert
I_{-+}\right\vert _{\widehat{\omega }}=\frac{1-\eta }{2}\left\vert
I_{-}\right\vert _{\widehat{\omega }}\ , \\
\left\vert I_{+-}\right\vert _{\widehat{\omega }} &=&\frac{1-\eta }{2}%
\left\vert I_{+}\right\vert _{\widehat{\omega }}\text{ and }\left\vert
I_{++}\right\vert _{\widehat{\omega }}=\frac{1+\eta }{2}\left\vert
I_{+}\right\vert _{\widehat{\omega }}\ .
\end{eqnarray*}

We now show that, as a consequence of $\eta =\eta \left( I_{r}^{\ell
}\right) =\frac{1}{N}$ in Lemma \ref{eta is 1/N}, there is a `homogeneous'
replicating formula for the self-similar measures $\widehat{\omega }_{-}=%
\mathbf{1}_{\left[ 0,\frac{1}{N}\right] }\widehat{\omega }$ and $\widehat{%
\omega }_{+}=\mathbf{1}_{\left[ 1-\frac{1}{N},1\right] }\widehat{\omega }$,
and hence also an `inhomogeneous' replicating formula for both $\widehat{%
\dot{\sigma}}_{-}=\mathbf{1}_{\left[ 0,\frac{1}{N}\right] }\widehat{\dot{%
\sigma}}$ and $\widehat{\dot{\sigma}}_{+}=\mathbf{1}_{\left[ 1-\frac{1}{N},1%
\right] }\widehat{\dot{\sigma}}$ with a point mass as the inhomogeneous
term. In order to state these replication formulas precisely we need the
measures%
\begin{equation*}
\widehat{\omega }_{--}\equiv \mathbf{1}_{\left[ 0,\frac{1}{N^{2}}\right] }%
\widehat{\omega }\text{ and }\widehat{\omega }_{-+}\equiv \mathbf{1}_{\left[ 
\frac{1}{N}-\frac{1}{N^{2}},\frac{1}{N}\right] }\widehat{\omega }\text{ and }%
\widehat{\omega }_{+-}\equiv \mathbf{1}_{\left[ 1-\frac{1}{N},1-\frac{1}{N}+%
\frac{1}{N^{2}}\right] }\widehat{\omega }\text{ and }\widehat{\omega }%
_{++}\equiv \mathbf{1}_{\left[ 1-\frac{1}{N^{2}},1\right] }\widehat{\omega },
\end{equation*}%
and we define $\limfunc{Ref}$ to be reflection about the origin in the real
line. For convenience in viewing formulas, we set $\eta =\frac{1}{N}$ in the
factors, but retain the notation $\frac{1}{N}$ in dilations and translations.

Let $\func{Dil}_{\gamma }x\equiv \gamma x$, and $\func{Dil}_{\gamma }\mu
\left( x\right) \equiv \frac{1}{\gamma }\mu \left( \func{Dil}_{\frac{1}{%
\gamma }}x\right) =\frac{1}{\gamma }\mu \left( \frac{x}{\gamma }\right) $ be
the measure with the same mass as $\mu $ but with support dilated by the
positive factor $\gamma $. Similarly let $\func{Trans}_{\gamma }x=x+\gamma $
and $\func{Trans}_{\gamma }\mu \left( x\right) \equiv \mu \left( \func{Trans}%
_{-\gamma }x\right) =\mu \left( x-\gamma \right) $. Finally, define $%
\limfunc{Ref}\left( x\right) =-x$ to be reflection about the origin $0$ in
the real line and $\limfunc{Ref}\mu \left( x\right) =\mu \left( \limfunc{Ref}%
x\right) =\mu \left( -x\right) $. The following diagram pictures the $%
\widehat{\omega }$ weight of closed intervals $I_{r}^{\ell }$ in italics,
and the $\widehat{\dot{\sigma}}$ measure of the open intervals $G_{r}^{\ell
} $ in bold: 
\begin{equation*}
\begin{Bmatrix}
\lbrack &  &  &  &  &  &  &  &  &  &  &  &  & \mathit{1} &  &  &  &  &  &  & 
&  &  &  &  &  & ] \\ 
&  &  &  &  &  &  &  &  &  &  &  &  &  &  &  &  &  &  &  &  &  &  &  &  &  & 
\\ 
\lbrack &  &  &  & \frac{\mathit{1}}{\mathit{2}} &  &  &  & ] & ( &  &  &  & 
\mathbf{1} &  &  &  & ) & [ &  &  &  & \frac{\mathit{1}}{\mathit{2}} &  &  & 
& ] \\ 
&  &  &  &  &  &  &  &  &  &  &  &  &  &  &  &  &  &  &  &  &  &  &  &  &  & 
\\ 
\lbrack & \frac{\mathit{1+\eta }}{\mathit{4}} & ] & ( & \frac{\mathbf{2}}{%
\mathbf{N}^{\mathbf{2}}} & ) & [ & \frac{\mathit{1-\eta }}{\mathit{4}} & ] & 
&  &  &  &  &  &  &  &  & [ & \frac{\mathit{1-\eta }}{\mathit{4}} & ] & ( & 
\frac{\mathbf{2}}{\mathbf{N}^{\mathbf{2}}} & ) & [ & \frac{\mathit{1+\eta }}{%
\mathit{4}} & ]%
\end{Bmatrix}%
.
\end{equation*}

\begin{lemma}
\label{replicate}We have the replicating formulas for $\widehat{\omega }_{-}$
and $\widehat{\dot{\sigma}}_{-}$;%
\begin{eqnarray}
\widehat{\omega }_{-} &=&\widehat{\omega }_{--}+\widehat{\omega }_{-+};
\label{rep} \\
\widehat{\omega }_{--} &=&\frac{1+\eta }{2}\func{Dil}_{\frac{1}{N}}\widehat{%
\omega }_{-}\text{ and }\widehat{\omega }_{-+}=\frac{1-\eta }{2}\func{Trans}%
_{\frac{1}{N}}\limfunc{Ref}\func{Dil}_{\frac{1}{N}}\widehat{\omega }_{-}\ , 
\notag \\
\widehat{\dot{\sigma}}_{-} &=&\widehat{\dot{\sigma}}_{--}+\frac{2}{N^{2}}%
\delta _{\frac{1}{2N}}+\widehat{\dot{\sigma}}_{-+};  \notag \\
\widehat{\dot{\sigma}}_{--} &=&\frac{1}{N^{2}}\frac{2}{1+\eta }\func{Dil}_{%
\frac{1}{N}}\widehat{\dot{\sigma}}_{-}\text{ and }\widehat{\dot{\sigma}}%
_{-+}=\frac{1}{N^{2}}\frac{2}{1-\eta }\func{Trans}_{\frac{1}{N}}\limfunc{Ref}%
\func{Dil}_{\frac{1}{N}}\widehat{\dot{\sigma}}_{-}\ ,  \notag
\end{eqnarray}%
and the replicating formulas for $\widehat{\omega }_{+}$ and $\widehat{\dot{%
\sigma}}_{+}$;%
\begin{eqnarray*}
\widehat{\omega }_{+} &=&\widehat{\omega }_{+-}+\widehat{\omega }_{++}; \\
\widehat{\omega }_{+-} &=&\frac{1-\eta }{2}\func{Trans}_{1}\limfunc{Ref}%
\func{Trans}_{\frac{1}{N}}\limfunc{Ref}\func{Dil}_{\frac{1}{N}}\limfunc{Ref}%
\func{Trans}_{-1}\widehat{\omega }_{+}, \\
\widehat{\omega }_{++} &=&\frac{1+\eta }{2}\func{Trans}_{1}\limfunc{Ref}%
\func{Dil}_{\frac{1}{N}}\limfunc{Ref}\func{Trans}_{-1}\widehat{\omega }_{+}\
, \\
\widehat{\dot{\sigma}}_{+} &=&\widehat{\dot{\sigma}}_{+-}+\frac{2}{N^{2}}%
\delta _{1-\frac{1}{2N}}+\widehat{\dot{\sigma}}_{++}; \\
\widehat{\dot{\sigma}}_{+-} &=&\frac{1}{N^{2}}\frac{2}{1-\eta }\func{Trans}%
_{1}\limfunc{Ref}\func{Trans}_{\frac{1}{N}}\limfunc{Ref}\func{Dil}_{\frac{1}{%
N}}\limfunc{Ref}\func{Trans}_{-1}\widehat{\dot{\sigma}}_{+}, \\
\widehat{\dot{\sigma}}_{++} &=&\frac{1}{N^{2}}\frac{2}{1+\eta }\func{Trans}%
_{1}\limfunc{Ref}\func{Dil}_{\frac{1}{N}}\limfunc{Ref}\func{Trans}_{-1}%
\widehat{\dot{\sigma}}_{+}\ ,
\end{eqnarray*}%
as well as the dilation invariance of $H_{\flat }$:%
\begin{equation*}
K_{\flat }\left( x\right) =\func{Dil}_{\frac{1}{N}}K_{\flat }\left( \func{Dil%
}_{\frac{1}{N}}x\right) .
\end{equation*}
\end{lemma}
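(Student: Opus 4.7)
The plan is to derive all six replication formulas and the dilation invariance of $K_\flat$ from one structural observation: by Lemma \ref{eta is 1/N} and the pattern (\ref{precisely}), the redistribution parameter is uniformly $\eta = 1/N$, so the relative split at any non-root node $J \in \mathcal{D}$ is determined entirely by whether $J$ is a left or right child of its parent. In particular, prepending a fixed sign to every sign-string in a sub-tree preserves all relative splits, which gives self-similarity of $\widehat{\omega}_-$ under scaling to $I_{--}$ and a reflected self-similarity under scaling to $I_{-+}$.

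For the $(-)$-side, the additive decomposition of $\widehat{\omega}_-$ is immediate since $\widehat{\omega}$ is supported on $E^{(N)}$ and the open gap $G(I_-) = I_-\setminus(I_{--}\cup I_{-+})$ carries no $\widehat{\omega}$-mass. The $\widehat{\dot{\sigma}}_-$-decomposition picks up the extra term $(2/N^2)\delta_{1/(2N)}$, since the only point-mass of $\widehat{\dot{\sigma}}$ in $G(I_-)$ is at $\dot{z}(I_-) = 1/(2N)$ with weight $\widehat{s}(I_-) = |I_-|^2/|I_-|_{\widehat{\omega}} = 2/N^2$. For the homogeneous formula $\widehat{\omega}_{--} = \frac{1+\eta}{2}\func{Dil}_{1/N}\widehat{\omega}_-$, I verify that the sub-tree rooted at $I_{--}$ is isomorphic as a weighted tree to the sub-tree rooted at $I_-$ under $\func{Dil}_{1/N}$: indeed, $I_{--\varepsilon}$ sits on the same side of its parent as $I_{-\varepsilon}$ for every sign-string $\varepsilon$, so (\ref{precisely}) assigns identical relative splits at every level, and matching total masses produces $(1+\eta)/2$. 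For $\widehat{\omega}_{-+} = \frac{1-\eta}{2}\func{Trans}_{1/N}\limfunc{Ref}\func{Dil}_{1/N}\widehat{\omega}_-$, the sub-tree at $I_{-+}$ is isomorphic to the one at $I_-$ but with left-right roles globally reversed (since $I_{-+}$ is a right child while $I_-$ is a left child); this is implemented by $\limfunc{Ref}$, after which $\func{Trans}_{1/N}$ repositions the support to $I_{-+}$.

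The $\widehat{\dot{\sigma}}_{-\pm}$-formulas then follow from $\widehat{s}(I) = |I|^2/|I|_{\widehat{\omega}}$: dilation by $1/N$ contributes $1/N^2$ through the squared lengths, while the $\widehat{\omega}$-mass ratios from the previous step enter reciprocally through the denominator, combining to $\frac{1}{N^2}\cdot\frac{2}{1\pm\eta}$ in front of the dilated, reflected, or translated copy of $\widehat{\dot{\sigma}}_-$. For the $(+)$-side, I would show that $\widehat{\omega}$ is invariant under $\tau(x) = 1-x$. A short induction using the mnemonic (\ref{mnemonic}) shows that the multiplier at the $i$-th level in reaching $I_\varepsilon$ depends only on whether $\varepsilon_i = \varepsilon_{i-1}$, so flipping all signs of $\varepsilon$ preserves the mass; hence $\widehat{\omega}_- = \func{Trans}_1\limfunc{Ref}\widehat{\omega}_+$. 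Conjugating the $(-)$-formulas by $\tau$ (equivalently using $\func{Trans}_1\limfunc{Ref} = \limfunc{Ref}\func{Trans}_{-1}$) then yields the claimed $(+)$-formulas, with the swap of $\frac{1\pm\eta}{2}$ encoding that $I_{+-}$ is the \emph{inner} rather than the outer grandchild on its side.

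Finally, the dilation invariance $K_\flat(x) = \func{Dil}_{1/N}K_\flat(\func{Dil}_{1/N}x)$ is immediate from the defining scaling $K_\flat(x) = N^{-k}K_\flat(N^{-k}x)$ on $[N^{k-1/2}, N^{k+1/2}]$, taking $k = 1$ and extending by odd symmetry. The main obstacle is the tree-isomorphism step for $\widehat{\omega}_{-+}$: one must ensure that the reflection introduced at the top level correctly induces a left-right swap at every subsequent level, not only at the first. All remaining work is bookkeeping of spatial transformations and normalizing constants.
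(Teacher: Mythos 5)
Your proposal is correct and follows essentially the same route as the paper: the key observation that the splitting factor at each node depends only on whether that node lies on the same side of its parent as the parent does of the grandparent (the $+,-,-,+$ pattern of (\ref{precisely})) is exactly what the paper encodes in its multiplier arrays and the $\limfunc{Reftree}$ operation, yielding the direct tree isomorphism for $\widehat{\omega }_{--}$ and the reflected one for $\widehat{\omega }_{-+}$, with the $\widehat{\dot{\sigma}}$ formulas then read off from $\widehat{s}\left( I\right) =\left\vert I\right\vert ^{2}/\left\vert I\right\vert _{\widehat{\omega }}$ and the $+$-side obtained by the symmetry $\widehat{\omega }_{+}=\func{Trans}_{1}\limfunc{Ref}\widehat{\omega }_{-}$. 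Your explicit treatment of the inhomogeneous point-mass term $\frac{2}{N^{2}}\delta _{\frac{1}{2N}}$ and of the dilation invariance of $K_{\flat }$ is a harmless elaboration of details the paper leaves implicit.
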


\begin{proof}
From (\ref{heads and tails}), the masses $\lambda \left( I\right) $ of the
measure $\widehat{\omega }$ on the intervals $I$ in the dyadic tree $%
\mathcal{D}$ with root $\mathfrak{r}=\mathfrak{r}\left( \mathcal{D}\right) =%
\left[ 0,1\right] $\ are given by%
\begin{equation}
\lambda \left( I\right) \equiv \left\vert I\right\vert _{\widehat{\omega _{-}%
}}=\frac{1}{2}\left( \frac{1+\eta }{2}\right) ^{H\left( I\right) }\left( 
\frac{1-\eta }{2}\right) ^{T\left( I\right) }  \label{def lambda}
\end{equation}%
where $I=\left[ 0,1\right] _{\mathbf{\varepsilon }}$ with $\mathbf{%
\varepsilon }=\left( \varepsilon _{1},\varepsilon _{2},...,\varepsilon
_{d\left( I\right) }\right) \in \left\{ +,-\right\} ^{d\left( I\right) }$
and $H\left( I\right) $ (for heads) is the number of $+$ signs in $\mathbf{%
\varepsilon }^{\prime }\equiv \left( \varepsilon _{2},...,\varepsilon
_{d\left( I\right) }\right) $ and $T\left( I\right) $ (for tails) is the
number of $-$ signs in $\mathbf{\varepsilon }^{\prime }$. We can picture the
restricted sequence $\lambda :\mathcal{D}_{-}\rightarrow \left( 0,\infty
\right) $ on the left half of the tree $\mathcal{D}$ in the usual way by
displaying the values schematically in an array as follows:%
\begin{eqnarray}
&&  \label{first} \\
&&  \notag \\
&&\left[ 
\begin{array}{ccccccc}
&  &  & \lambda \left( \mathfrak{r}_{-}\right) &  &  &  \\ 
&  &  &  &  &  &  \\ 
& \lambda \left( \mathfrak{r}_{--}\right) &  &  &  & \lambda \left( 
\mathfrak{r}_{-+}\right) &  \\ 
&  &  &  &  &  &  \\ 
\lambda \left( \mathfrak{r}_{---}\right) &  & \lambda \left( \mathfrak{r}%
_{--+}\right) &  & \lambda \left( \mathfrak{r}_{-+-}\right) &  & \lambda
\left( \mathfrak{r}_{-++}\right) \\ 
\vdots &  & \vdots &  & \vdots &  & \vdots%
\end{array}%
\right]  \notag
\end{eqnarray}%
which can be written in shorthand as%
\begin{eqnarray}
&&  \label{second} \\
&&  \notag \\
&&\left[ 
\begin{array}{ccccccc}
&  &  & \left[ 0\right] &  &  &  \\ 
&  &  &  &  &  &  \\ 
& \left[ +\right] &  &  &  & \left[ -\right] &  \\ 
&  &  &  &  &  &  \\ 
\left[ ++\right] &  & \left[ +-\right] &  & -- &  & \left[ -+\right] \\ 
\vdots &  & \vdots &  & \vdots &  & \vdots%
\end{array}%
\right]  \notag
\end{eqnarray}%
where $\left[ \varepsilon _{1}\varepsilon _{2}...\varepsilon _{k}\right]
=\left( \frac{1+\varepsilon _{2}\eta }{2}\right) \left( \frac{1+\varepsilon
_{3}\eta }{2}\right) ...\left( \frac{1+\varepsilon _{k}\eta }{2}\right) $,
i.e. the notation $\left[ -+-\right] $ stands for the product $\left( \frac{%
1-\eta }{2}\right) \left( \frac{1+\eta }{2}\right) \left( \frac{1-\eta }{2}%
\right) $ in (\ref{def lambda}), etc. Note that the $+^{\prime }s$ and $%
-^{\prime }s$ in square brackets in the second array above refer to the 
\emph{signs} $\varepsilon _{2},...\varepsilon _{k}$ in front of $\eta =\frac{%
1}{N}$, while the $+^{\prime }s$ and $-^{\prime }s$ ocurring as subscripts
of the root $\mathfrak{r}$ in the first array refer to the \emph{locations}
left and right. As a consequence the $+^{\prime }s$ and $-^{\prime }s$ in
the two arrays are quite different.

Recall that $\mathcal{D}_{-}=\left[ 0,\frac{1}{N}\right] \oplus \mathcal{D}$
and define in analogy $\mathcal{D}_{--}=\left[ 0,\frac{1}{N^{2}}\right]
\oplus \mathcal{D}$, etc. The pattern of $\pm $ signs is determined by the
formulas in (\ref{precisely}), and from this we see that the sequences $%
\left\{ \lambda \left( I\right) \right\} _{I\in \mathcal{D}_{-}}$ and $%
\left\{ \lambda \left( I\right) \right\} _{I\in \mathcal{D}_{--}}$ satisfy%
\begin{eqnarray*}
\left\{ \lambda \left( I\right) \right\} _{I\in \mathcal{D}_{--}} &=&\fbox{$%
\begin{array}{ccccccc}
&  &  & \left[ +\right] &  &  &  \\ 
&  & \swarrow &  & \searrow &  &  \\ 
& \left[ ++\right] &  &  &  & \left[ +-\right] &  \\ 
\left[ +++\right] &  & \left[ ++-\right] &  & \left[ +--\right] &  & \left[
+-+\right] \\ 
\vdots &  & \vdots &  & \vdots &  & \vdots%
\end{array}%
$} \\
&=&\left( \frac{1+\eta }{2}\right) \fbox{$%
\begin{array}{ccccccccccccccccc}
&  &  &  &  &  &  &  & \left[ 0\right] &  &  &  &  &  &  &  &  \\ 
&  &  &  &  &  & \swarrow &  &  &  & \searrow &  &  &  &  &  &  \\ 
&  &  &  & \swarrow &  &  &  &  &  &  &  & \searrow &  &  &  &  \\ 
&  & \left[ +\right] &  &  &  &  &  &  &  &  &  &  &  & \left[ -\right] &  & 
\\ 
& \swarrow &  & \searrow &  &  &  &  &  &  &  &  &  & \swarrow &  & \searrow
&  \\ 
\left[ ++\right] &  &  &  & \left[ +-\right] &  &  &  &  &  &  &  & \left[ --%
\right] &  &  &  & \left[ -+\right] \\ 
\vdots &  &  &  & \vdots &  &  &  &  &  &  &  & \vdots &  &  &  & \vdots%
\end{array}%
$} \\
&=&\left( \frac{1+\eta }{2}\right) \left\{ \lambda \left( I\right) \right\}
_{I\in \mathcal{D}_{-}}\ .
\end{eqnarray*}%
In fact, we need only check the top two lines in the tree because (\ref%
{precisely}) shows that the pattern of signs along \emph{any} row of
grandchildren is always $+,-,-,+$, as given in (\ref{mnemonic}). The
identity $\left\{ \lambda \left( I\right) \right\} _{I\in \mathcal{D}%
_{--}}=\left( \frac{1+\eta }{2}\right) \left\{ \lambda \left( I\right)
\right\} _{I\in \mathcal{D}_{-}}$ just established between the restrictions
of the sequence $\lambda $ to the trees $\mathcal{D}_{--}$ and $\mathcal{D}%
_{-}$ translates precisely into the first formula in the second line of (\ref%
{rep}): $\widehat{\omega }_{--}=\frac{1+\eta }{2}\func{Dil}_{\frac{1}{N}}%
\widehat{\omega }_{-}$.

In order to obtain a similar result for the tree $\mathcal{D}_{-+}$, we
define the operation of reflection $\limfunc{Reftree}$ on a sequence $%
\lambda =\left\{ \lambda \left( \alpha \right) \right\} _{\alpha \in 
\mathcal{T}}$ by $\limfunc{Reftree}\lambda =\left\{ \lambda \left( 
\widetilde{\alpha }\right) \right\} _{\alpha \in \mathcal{T}}$ where if $%
\alpha =\mathfrak{r}_{\mathbf{\varepsilon }}$ then $\widetilde{\alpha }=%
\mathfrak{r}_{-\mathbf{\varepsilon }}$ where $-\mathbf{\varepsilon }$ is the
sequence of location signs (as in (\ref{first})) with $+$ and $-$
interchanged - in other words $\limfunc{Reftree}\lambda $ is the mirror
image of the sequence $\lambda $ when presented as an array as in (\ref%
{second}). Then using that $\mathcal{D}_{-}$ and $\mathcal{D}_{-+}$ can both
be identified with $\mathcal{T}$ as embedded trees, we have that the
sequences $\left\{ \lambda \left( I\right) \right\} _{I\in \mathcal{D}_{-}}$
and $\left\{ \lambda \left( I\right) \right\} _{I\in \mathcal{D}_{-+}}$
satisfy%
\begin{eqnarray}
&&  \label{lambda} \\
&&  \notag \\
\left\{ \lambda \left( I\right) \right\} _{I\in \mathcal{D}_{-+}} &=&\fbox{$%
\begin{array}{ccccccc}
&  &  & \left[ -\right] &  &  &  \\ 
&  & \swarrow &  & \searrow &  &  \\ 
& \left[ --\right] &  &  &  & \left[ -+\right] &  \\ 
\left[ --+\right] &  & \left[ ---\right] &  & \left[ -+-\right] &  & \left[
-++\right] \\ 
\vdots &  & \vdots &  & \vdots &  & \vdots%
\end{array}%
$}  \notag \\
&=&\left( \frac{1-\eta }{2}\right) \fbox{$%
\begin{array}{ccccccccccccccccc}
&  &  &  &  &  &  &  & \left[ 0\right] &  &  &  &  &  &  &  &  \\ 
&  &  &  &  &  & \swarrow &  &  &  & \searrow &  &  &  &  &  &  \\ 
&  &  &  & \swarrow &  &  &  &  &  &  &  & \searrow &  &  &  &  \\ 
&  & \left[ -\right] &  &  &  &  &  &  &  &  &  &  &  & \left[ +\right] &  & 
\\ 
& \swarrow &  & \searrow &  &  &  &  &  &  &  &  &  & \swarrow &  & \searrow
&  \\ 
\left[ -+\right] &  &  &  & \left[ --\right] &  &  &  &  &  &  &  & \left[ +-%
\right] &  &  &  & \left[ ++\right] \\ 
\vdots &  &  &  & \vdots &  &  &  &  &  &  &  & \vdots &  &  &  & \vdots%
\end{array}%
$}  \notag
\end{eqnarray}%
which equals%
\begin{eqnarray}
&&\left( \frac{1-\eta }{2}\right) \limfunc{Reftree}\fbox{$%
\begin{array}{ccccccccccccccccc}
&  &  &  &  &  &  &  & \left[ 0\right] &  &  &  &  &  &  &  &  \\ 
&  &  &  &  &  & \swarrow &  &  &  & \searrow &  &  &  &  &  &  \\ 
&  &  &  & \swarrow &  &  &  &  &  &  &  & \searrow &  &  &  &  \\ 
&  & \left[ +\right] &  &  &  &  &  &  &  &  &  &  &  & \left[ -\right] &  & 
\\ 
& \swarrow &  & \searrow &  &  &  &  &  &  &  &  &  & \swarrow &  & \searrow
&  \\ 
\left[ ++\right] &  &  &  & \left[ +-\right] &  &  &  &  &  &  &  & \left[ --%
\right] &  &  &  & \left[ -+\right] \\ 
\vdots &  &  &  & \vdots &  &  &  &  &  &  &  & \vdots &  &  &  & \vdots%
\end{array}%
$}  \label{lambda2} \\
&=&\left( \frac{1-\eta }{2}\right) \limfunc{Reftree}\left\{ \lambda \left(
I\right) \right\} _{I\in \mathcal{D}_{-}}\ .  \notag
\end{eqnarray}%
Now for $K\in \mathcal{D}$, and a measure $\mu $ supported on the Cantor set 
$E^{\left( N\right) }$ and having $K$ equal to the minimal interval
containing the support of $\mu $, we associate to $\mu $ the sequence $%
\left\{ \mu \left( I\right) \right\} _{I\in \mathcal{D}_{K}}$ of its
measures on elements of the tree $\mathcal{D}_{K}\equiv \left\{ I\in 
\mathcal{D}:I\subset K\right\} $; and similarly if $\mu $ is supported on $%
E^{\left( N\right) }-1$, the translate of $E^{\left( N\right) }$ by one unit
to the left. There is of course a one-to-one correspondence between such
measures $\mu $ and their associated sequences on the tree $\mathcal{D}_{K}$%
. Then using the identity 
\begin{equation*}
\left\{ \limfunc{Ref}\mu \left( I\right) \right\} _{I\in \mathcal{D}_{K}-1}=%
\limfunc{Reftree}\left\{ \mu \left( I\right) \right\} _{I\in \mathcal{D}%
_{K}}\ ,
\end{equation*}%
where $\mathcal{D}_{K}-1$ denotes the translation of the tree $\mathcal{D}%
_{K}$ by one unit to the left, we see that (\ref{lambda}-\ref{lambda2})
translates precisely into the second formula in the second line of (\ref{rep}%
): $\widehat{\omega }_{-+}=\frac{1-\eta }{2}\func{Trans}_{\frac{1}{N}}%
\limfunc{Ref}\func{Dil}_{\frac{1}{N}}\widehat{\omega }_{-}$.

To obtain the analogous formulas for $\widehat{\omega }_{+}$ we note that%
\begin{eqnarray*}
\widehat{\omega }_{+} &=&\func{Trans}_{1}\limfunc{Ref}\widehat{\omega }_{-}\
, \\
\widehat{\omega }_{+-} &=&\func{Trans}_{1}\limfunc{Ref}\widehat{\omega }%
_{-+}\ , \\
\widehat{\omega }_{++} &=&\func{Trans}_{1}\limfunc{Ref}\widehat{\omega }%
_{--}\ .
\end{eqnarray*}%
Thus we have%
\begin{eqnarray*}
\widehat{\omega }_{+-} &=&\func{Trans}_{1}\limfunc{Ref}\left\{ \widehat{%
\omega }_{-+}\right\} =\func{Trans}_{1}\limfunc{Ref}\left\{ \frac{1-\eta }{2}%
\func{Trans}_{\frac{1}{N}}\limfunc{Ref}\func{Dil}_{\frac{1}{N}}\left[ 
\widehat{\omega }_{-}\right] \right\} \\
&=&\func{Trans}_{1}\limfunc{Ref}\left\{ \frac{1-\eta }{2}\func{Trans}_{\frac{%
1}{N}}\limfunc{Ref}\func{Dil}_{\frac{1}{N}}\left[ \limfunc{Ref}\func{Trans}%
_{-1}\widehat{\omega }_{+}\right] \right\} \\
&=&\frac{1-\eta }{2}\func{Trans}_{1}\limfunc{Ref}\func{Trans}_{\frac{1}{N}}%
\limfunc{Ref}\func{Dil}_{\frac{1}{N}}\limfunc{Ref}\func{Trans}_{-1}\widehat{%
\omega }_{+}
\end{eqnarray*}%
and%
\begin{eqnarray*}
\widehat{\omega }_{++} &=&\func{Trans}_{1}\limfunc{Ref}\left\{ \widehat{%
\omega }_{--}\right\} =\func{Trans}_{1}\limfunc{Ref}\left\{ \frac{1+\eta }{2}%
\func{Dil}_{\frac{1}{N}}\left[ \widehat{\omega }_{-}\right] \right\} \\
&=&\func{Trans}_{1}\limfunc{Ref}\left\{ \frac{1+\eta }{2}\func{Dil}_{\frac{1%
}{N}}\left[ \limfunc{Ref}\func{Trans}_{-1}\widehat{\omega }_{+}\right]
\right\} \\
&=&\frac{1+\eta }{2}\func{Trans}_{1}\limfunc{Ref}\func{Dil}_{\frac{1}{N}}%
\limfunc{Ref}\func{Trans}_{-1}\widehat{\omega }_{+}\ .
\end{eqnarray*}%
The formulas for $\widehat{\dot{\sigma}}_{\pm }$ are proved in the same way
as for $\widehat{\omega }_{\pm }$ using Lemma \ref{sigma dot} and the
definition (\ref{def s hat}) of the weights $\widehat{s}_{r}^{\ell
}=\left\vert G_{r}^{\ell }\right\vert _{\widehat{\dot{\sigma}}}$ in $%
\widehat{\dot{\sigma}}$ in terms of $\widehat{\omega }$:%
\begin{equation*}
\left\vert I_{r}^{\ell }\right\vert _{\widehat{\dot{\sigma}}}=\frac{N^{2}-1}{%
N^{2}-5}\left\vert G_{r}^{\ell }\right\vert _{\widehat{\dot{\sigma}}}=\frac{%
N^{2}-1}{N^{2}-5}\frac{\left\vert I_{r}^{\ell }\right\vert ^{2}}{\left\vert
I_{r}^{\ell }\right\vert _{\widehat{\omega }}}.
\end{equation*}%
This completes the proof of Lemma \ref{replicate}.
\end{proof}

\subsubsection{Completion of the proof for intervals $I_{r}^{\ell }$\label%
{Subsub comp}}

Armed with Lemma \ref{replicate} we now prove the forward testing condition
for the interval $I_{0}^{1}=\left[ 0,1\right] $:%
\begin{eqnarray}
\int \left\vert H_{\flat }\widehat{\dot{\sigma}}\right\vert ^{2}\widehat{%
\omega } &=&\int \left\vert H_{\flat }\left( \widehat{\dot{\sigma}}%
_{-}+\delta _{\frac{1}{2}}+\widehat{\dot{\sigma}}_{+}\right) \right\vert ^{2}%
\widehat{\omega }_{-}+\int \left\vert H_{\flat }\left( \widehat{\dot{\sigma}}%
_{-}+\delta _{\frac{1}{2}}+\widehat{\dot{\sigma}}_{+}\right) \right\vert ^{2}%
\widehat{\omega }_{+}  \label{we compute} \\
&\leq &\left( 1+\varepsilon \right) \left\{ \int \left\vert H_{\flat }%
\widehat{\dot{\sigma}}_{-}\right\vert ^{2}\widehat{\omega }_{-}+\int
\left\vert H_{\flat }\widehat{\dot{\sigma}}_{+}\right\vert ^{2}\widehat{%
\omega }_{+}\right\} +\mathcal{R}_{\varepsilon },  \notag
\end{eqnarray}%
where the remainder term $\mathcal{R}_{\varepsilon }$ is easily seen to
satisfy 
\begin{equation*}
\mathcal{R}_{\varepsilon }\lesssim _{\varepsilon }\mathcal{A}_{2}\left( \int 
\dot{\sigma}\right) ,
\end{equation*}%
since the supports of $\delta _{\frac{1}{2}}+\widehat{\dot{\sigma}}_{+}$ and 
$\widehat{\omega }_{-}$ are well separated, as are those of $\widehat{\dot{%
\sigma}}_{-}+\delta _{\frac{1}{2}}$ and $\widehat{\omega }_{+}$. Indeed, we
first use $\left( a+b\right) ^{2}\leq \left( 1+\varepsilon \right)
a^{2}+\left( 1+\frac{1}{\varepsilon }\right) b^{2}$ to obtain%
\begin{eqnarray*}
&&\int \left\vert H_{\flat }\left( \widehat{\dot{\sigma}}_{-}+\delta _{\frac{%
1}{2}}+\widehat{\dot{\sigma}}_{+}\right) \right\vert ^{2}\widehat{\omega _{-}%
} \\
&\leq &\int \left( \left\vert H_{\flat }\left( \widehat{\dot{\sigma}}%
_{-}\right) \right\vert +\left\vert H_{\flat }\left( \delta _{\frac{1}{2}}+%
\widehat{\dot{\sigma}}_{+}\right) \right\vert \right) ^{2}\widehat{\omega }%
_{-} \\
&\leq &\int \left\{ \left( 1+\varepsilon \right) \left\vert H\left( \widehat{%
\dot{\sigma}}_{-}\right) \right\vert ^{2}+\left( 1+\frac{1}{\varepsilon }%
\right) \left\vert H\left( \delta _{\frac{1}{2}}+\widehat{\dot{\sigma}}%
_{+}\right) \right\vert ^{2}\right\} \widehat{\omega }_{-},
\end{eqnarray*}%
and then for example, since $K_{\flat }\left( y-x\right) =1$ for $x\in \left[
0,\frac{1}{N}\right] $ and $y\in \left[ \frac{N-1}{N},1\right] $, 
\begin{eqnarray*}
\int \left\vert H_{\flat }\left( \widehat{\dot{\sigma}}_{+}\right)
\right\vert ^{2}\widehat{\omega }_{-} &=&\int_{\left[ 0,\frac{1}{N}\right]
}\left\vert \int_{\left[ \frac{N-1}{N},1\right] }d\widehat{\dot{\sigma}}%
_{+}\left( y\right) \right\vert ^{2}d\widehat{\omega }_{-} \\
&=&\left\vert \left[ 1-\frac{1}{N},1\right] \right\vert _{\widehat{\dot{%
\sigma}}_{+}}^{2}{}\left\vert \left[ 0,\tfrac{1}{M}\right] \right\vert _{%
\widehat{\omega }_{-}} \\
&\leq &\frac{\left\vert \left[ 0,1\right] \right\vert _{\widehat{\dot{\sigma}%
}}\left\vert \left[ 0,1\right] \right\vert _{\widehat{\omega }}}{|[0,1]|^{2}}%
\int \widehat{\dot{\sigma}}_{+}\leq \mathcal{A}_{2}\int \widehat{\dot{\sigma}%
}_{+}\ .
\end{eqnarray*}

Now we continue to the grandchildren so as to exploit the replication
formulas. We write%
\begin{eqnarray*}
\int \left\vert H_{\flat }\widehat{\dot{\sigma}}_{-}\right\vert ^{2}\widehat{%
\omega }_{-} &=&\int \left\vert H_{\flat }\left( \widehat{\dot{\sigma}}_{--}+%
\frac{2}{N^{2}}\delta _{\frac{1}{2N}}+\widehat{\dot{\sigma}}_{-+}\right)
\right\vert ^{2}\widehat{\omega }_{--}+\int \left\vert H_{\flat }\left( 
\widehat{\dot{\sigma}}_{--}+\frac{2}{N^{2}}\delta _{\frac{1}{2N}}+\widehat{%
\dot{\sigma}}_{-+}\right) \right\vert ^{2}\widehat{\omega }_{-+} \\
&\leq &\left( 1+\varepsilon \right) \left\{ \int \left\vert H_{\flat }%
\widehat{\dot{\sigma}}_{--}\right\vert ^{2}\widehat{\omega }_{--}+\int
\left\vert H_{\flat }\widehat{\dot{\sigma}}_{-+}\right\vert ^{2}\widehat{%
\omega }_{-+}\right\} +\mathcal{R}_{\varepsilon },
\end{eqnarray*}%
where again the remainder term $\mathcal{R}_{\varepsilon }$ is easily seen
just as above to satisfy 
\begin{equation*}
\mathcal{R}_{\varepsilon }\lesssim _{\varepsilon }\mathcal{A}_{2}\left( \int 
\widehat{\dot{\sigma}}_{-}\right) ,
\end{equation*}%
since the supports of $\frac{2}{N^{2}}\delta _{\frac{1}{2N}}+\widehat{\dot{%
\sigma}}_{-+}$ and $\widehat{\omega }_{--}$ are well separated, as are those
of $\widehat{\dot{\sigma}}_{--}+\frac{2}{N^{2}}\delta _{\frac{1}{2N}}$ and $%
\widehat{\omega }_{-+}$. Similarly we have%
\begin{equation*}
\int \left\vert H_{\flat }\widehat{\dot{\sigma}}_{+}\right\vert ^{2}\widehat{%
\omega }_{+}\leq \left( 1+\varepsilon \right) \left\{ \int \left\vert
H_{\flat }\widehat{\dot{\sigma}}_{+-}\right\vert ^{2}\widehat{\omega }%
_{+-}+\int \left\vert H_{\flat }\widehat{\dot{\sigma}}_{++}\right\vert ^{2}%
\widehat{\omega }_{++}\right\} +C\mathcal{A}_{2}\left( \int \widehat{\dot{%
\sigma}}_{+}\right) .
\end{equation*}

But now we note that%
\begin{eqnarray*}
\int \left\vert H_{\flat }\widehat{\dot{\sigma}}_{--}\right\vert ^{2}%
\widehat{\omega }_{--} &=&\int \left\vert H_{\flat }\widehat{\dot{\sigma}}%
_{--}\left( x\right) \right\vert ^{2}\frac{1+\eta }{2}\func{Dil}_{\frac{1}{N}%
}\widehat{\omega }_{--}\left( x\right) =\frac{1+\eta }{2}\int \left\vert
H_{\flat }\widehat{\dot{\sigma}}_{--}\left( x\right) \right\vert ^{2}N%
\widehat{\omega }_{-}\left( Nx\right) \\
&=&\frac{1+\eta }{2}\int \left\vert H_{\flat }\widehat{\dot{\sigma}}%
_{--}\left( \frac{y}{N}\right) \right\vert ^{2}N\widehat{\omega }_{-}\left(
y\right) \frac{dx}{dy}\ \ \ \ \ \ \ \ \ \ \ \ \ \ \ \ \ \ \ \ (y=Nx) \\
&=&\frac{1+\eta }{2}\int \left\vert \int K_{\flat }\left( \frac{y}{N}%
-z\right) \widehat{\dot{\sigma}}_{--}\left( z\right) \right\vert ^{2}%
\widehat{\omega }_{-}\left( y\right) \\
&=&\frac{1+\eta }{2}\int \left\vert \int K_{\flat }\left( \frac{y}{N}%
-z\right) \frac{2}{1+\eta }\frac{1}{N^{2}}\func{Dil}_{\frac{1}{N}}\widehat{%
\dot{\sigma}}_{-}\left( z\right) \right\vert ^{2}\widehat{\omega }_{-}\left(
y\right) \\
&=&\frac{2}{1+\eta }\frac{1}{N^{4}}\int \left\vert \int K_{\flat }\left( 
\frac{y}{N}-z\right) N\widehat{\dot{\sigma}}_{-}\left( Nz\right) \right\vert
^{2}\widehat{\omega }_{-}\left( y\right) \\
&=&\frac{2}{1+\eta }\frac{1}{N^{4}}\int \left\vert \int K_{\flat }\left( 
\frac{y}{N}-\frac{u}{N}\right) N\widehat{\dot{\sigma}}_{-}\left( u\right) 
\frac{dz}{du}\right\vert ^{2}\widehat{\omega }_{-}\left( y\right) \ \ \ \ \
\ \ \ \ \ \ \ \ \ \ \ \ \ \ \ (u=Nz) \\
&=&\frac{2}{1+\eta }\frac{1}{N^{4}}\int \left\vert \int NK_{\flat }\left(
y-u\right) \widehat{\dot{\sigma}}_{-}\left( u\right) \right\vert ^{2}%
\widehat{\omega }_{-}\left( y\right) =\frac{2}{1+\eta }\frac{1}{N^{2}}\int
\left\vert H_{\flat }\widehat{\dot{\sigma}}_{-}\right\vert ^{2}\widehat{%
\omega }_{-}\ ,
\end{eqnarray*}%
and similarly 
\begin{equation*}
\left\vert H_{\flat }\widehat{\dot{\sigma}}_{-+}\right\vert ^{2}\widehat{%
\omega }_{-+}=\frac{2}{1-\eta }\frac{1}{N^{2}}\int \left\vert H_{\flat }%
\widehat{\dot{\sigma}}_{-}\right\vert ^{2}\widehat{\omega }_{-}\ .
\end{equation*}%
Moreover, we also have from trivial modifications of the same arguments that%
\begin{equation*}
\int \left\vert H_{\flat }\widehat{\dot{\sigma}}_{+-}\right\vert ^{2}%
\widehat{\omega }_{+-}=\frac{2}{1-\eta }\frac{1}{N^{2}}\int \left\vert
H_{\flat }\widehat{\dot{\sigma}}_{+}\right\vert ^{2}\widehat{\omega }_{+}%
\text{ and }\int \left\vert H_{\flat }\widehat{\dot{\sigma}}_{++}\right\vert
^{2}\widehat{\omega }_{++}=\frac{2}{1+\eta }\frac{1}{N^{2}}\int \left\vert
H_{\flat }\widehat{\dot{\sigma}}_{+}\right\vert ^{2}\widehat{\omega }_{+}\ .
\end{equation*}%
Altogether we have that%
\begin{eqnarray*}
&&\int \left\vert H_{\flat }\widehat{\dot{\sigma}}_{-}\right\vert ^{2}%
\widehat{\omega }_{-}+\int \left\vert H_{\flat }\widehat{\dot{\sigma}}%
_{+}\right\vert ^{2}\widehat{\omega }_{+} \\
&\leq &\left( 1+\varepsilon \right) \left\{ \int \left\vert H_{\flat }%
\widehat{\dot{\sigma}}_{--}\right\vert ^{2}\widehat{\omega }_{--}+\int
\left\vert H_{\flat }\widehat{\dot{\sigma}}_{-+}\right\vert ^{2}\widehat{%
\omega }_{-+}+\int \left\vert H_{\flat }\widehat{\dot{\sigma}}%
_{+-}\right\vert ^{2}\widehat{\omega }_{+-}+\int \left\vert H_{\flat }%
\widehat{\dot{\sigma}_{++}}\right\vert ^{2}\widehat{\omega }_{++}\right\} +C%
\mathcal{A}_{2}^{2}\left( \int \widehat{\dot{\sigma}}\right) \\
&=&\left( 1+\varepsilon \right) \left( \frac{4}{1-\eta }+\frac{4}{1+\eta }%
\right) \frac{1}{N^{2}}\left\{ \int \left\vert H_{\flat }\widehat{\dot{\sigma%
}}_{-}\right\vert ^{2}\widehat{\omega }_{-}+\int \left\vert H_{\flat }%
\widehat{\dot{\sigma}}_{+}\right\vert ^{2}\widehat{\omega }_{+}\right\} +C%
\mathcal{A}_{2}\left( \int \widehat{\dot{\sigma}}\right) ,
\end{eqnarray*}%
and provided both $\int \left\vert H_{\flat }\widehat{\dot{\sigma}}%
_{-}\right\vert ^{2}\widehat{\omega }_{-}$ and $\int \left\vert H_{\flat }%
\widehat{\dot{\sigma}}_{+}\right\vert ^{2}\widehat{\omega }_{+}$ are finite
we conclude that%
\begin{equation*}
\int \left\vert H_{\flat }\widehat{\dot{\sigma}}_{-}\right\vert ^{2}\widehat{%
\omega }_{-}+\int \left\vert H_{\flat }\widehat{\dot{\sigma}}_{+}\right\vert
^{2}\widehat{\omega }_{+}\leq \frac{1}{1-\left( \frac{2}{1+\eta }+\frac{2}{%
1-\eta }\right) \frac{1+\varepsilon }{N^{2}}}C\mathcal{A}_{2}^{2}\left( \int 
\dot{\sigma}\right)
\end{equation*}%
for $1-\left( \frac{4}{1+\eta }+\frac{4}{1-\eta }\right) \frac{1+\varepsilon 
}{N^{2}}>0$, e.g. if $0<\varepsilon <1$ and $N\geq 5$. Finally then we have%
\begin{equation*}
\int \left\vert H_{\flat }\widehat{\dot{\sigma}}\right\vert ^{2}\widehat{%
\omega }\leq \left( 1+\varepsilon \right) \left\{ \int \left\vert H_{\flat }%
\widehat{\dot{\sigma}}_{-}\right\vert ^{2}\widehat{\omega }_{-}+\int
\left\vert H_{\flat }\widehat{\dot{\sigma}}_{+}\right\vert ^{2}\widehat{%
\omega }_{+}\right\} +C\mathcal{A}_{2}\left( \int \widehat{\dot{\sigma}}%
\right) \leq C\mathcal{A}_{2}\left( \int \widehat{\dot{\sigma}}\right) .
\end{equation*}

To avoid making the assumption that $\int \left\vert H\widehat{\dot{\sigma}}%
\right\vert ^{2}\widehat{\omega }$ is finite, we use the approximations%
\begin{eqnarray}
d\widehat{\omega }^{\left( m\right) }\left( x\right)
&=&\sum_{i=1}^{2^{m}}2^{-m}\kappa _{i}^{m}\frac{1}{\left\vert
I_{i}^{m}\right\vert }\mathbf{1}_{I_{i}^{m}}\left( x\right) dx,
\label{approximations} \\
\widehat{\dot{\sigma}}^{\left( n\right) } &=&\sum_{k<n}\sum_{j=1}^{2^{k}}%
\widehat{s}_{j}^{k}\delta _{\dot{z}_{j}^{k}},  \notag
\end{eqnarray}%
($\kappa _{i}^{m}$ is defined in (\ref{def kappa})) as in the argument for
the forward testing condition for the corresponding weight pair in \cite%
{LaSaUr2}. Note that $\widehat{\omega }^{\left( m\right) }=\omega _{m}$ is
the $m^{th}$ generation redistribution of $\omega $ as defined in Subsection %
\ref{SubSec redist}, and that $\widehat{\dot{\sigma}}^{\left( n\right) }$ is
a partial sum of the series defining $\widehat{\dot{\sigma}}$. Then for
fixed $n\leq m$ we obtain in analogy with \cite{LaSaUr2} that%
\begin{equation*}
\int \left\vert H_{\flat }\widehat{\dot{\sigma}}^{\left( n\right)
}\right\vert ^{2}\widehat{\omega }^{\left( m\right) }\leq C\mathcal{A}%
_{2}\left( \int \widehat{\dot{\sigma}}^{\left( n\right) }\right) +\left(
1-\delta \right) ^{m-n}\int \left\vert H_{\flat }\widehat{\dot{\sigma}}%
^{\left( 0\right) }\right\vert ^{2}\widehat{\omega }^{\left( m-n\right) },
\end{equation*}%
where $\int \left\vert H_{\flat }\widehat{\dot{\sigma}}^{\left( 0\right)
}\right\vert ^{2}\widehat{\omega }^{\left( m-n\right) }\leq C\mathcal{A}_{2}$
independent of $m$ and $n$. Now for fixed $n$ we let $m\rightarrow \infty $
which gives%
\begin{equation*}
\int \left\vert H_{\flat }\widehat{\dot{\sigma}}^{\left( n\right)
}\right\vert ^{2}\widehat{\omega }\leq C\mathcal{A}_{2}\left( \int \widehat{%
\dot{\sigma}}^{\left( n\right) }\right)
\end{equation*}%
since on the support of $\widehat{\omega }^{\left( n\right) }$ the function $%
H_{\flat }\widehat{\dot{\sigma}}^{\left( n\right) }$ is continuous. Then let 
$n\rightarrow \infty $ to obtain using Fatou's lemma,%
\begin{equation*}
\int \left\vert H_{\flat }\widehat{\dot{\sigma}}\right\vert ^{2}\widehat{%
\omega }=\int \liminf_{n\rightarrow \infty }\left\vert H_{\flat }\widehat{%
\dot{\sigma}}^{\left( n\right) }\right\vert ^{2}\widehat{\omega }\leq
\liminf_{n\rightarrow \infty }\int \left\vert H_{\flat }\widehat{\dot{\sigma}%
}^{\left( n\right) }\right\vert ^{2}\widehat{\omega }\leq C\mathcal{A}%
_{2}\liminf_{n\rightarrow \infty }\left( \int \widehat{\dot{\sigma}}^{\left(
n\right) }\right) =C\mathcal{A}_{2}\left( \int \widehat{\dot{\sigma}}\right)
,
\end{equation*}%
since for any $x$ in the Cantor set $E^{\left( N\right) }$, $%
\lim_{n\rightarrow \infty }H_{\flat }\widehat{\dot{\sigma}}^{\left( n\right)
}\left( x\right) =H_{\flat }\widehat{\dot{\sigma}}\left( x\right) $ because
the support of $\widehat{\dot{\sigma}}$ consists of points $\dot{z}%
_{r}^{\ell }$ whose distances from $x$ are at least $\frac{c}{N^{\ell }}$.

This completes the proof of the forward testing condition for the interval $%
I_{0}^{1}=\left[ 0,1\right] $. The arguments above are easily adapted to
prove the forward testing condition for any $I_{r}^{\ell }\in \mathcal{D}$
not equal to $I_{0}^{1}$. Indeed, the only essential difference, apart from
scaling factors related to $\ell $, is that, unlike the case $I=I_{0}^{1}$,
there is a redistribution of the $\widehat{\omega }$-measures of the
children of $I_{r}^{\ell }$. Consequently, in analogy with Lemma \ref%
{replicate}, there are replicating formulas for the children of $I_{r}^{\ell
}$ as opposed to the grandchildren as for the case $I_{0}^{1}=\left[ 0,1%
\right] $, resulting in a small simplification of the proof. For example, if
we fix $I_{1}^{\ell }=\left[ 0,\frac{1}{N^{\ell }}\right] $ to be the
leftmost interval at generation $\ell $, and if we denote the restricted
measures $\widehat{\omega }\mid _{I_{1}^{\ell }}$ and $\widehat{\dot{\sigma}}%
\mid _{I_{1}^{\ell }}$ by $\widehat{\omega }^{\ell }$ and $\widehat{\dot{%
\sigma}}^{\ell }$ respectively, then the replicating formulas for the
children of $\widehat{\omega }^{\ell }$ and $\widehat{\dot{\sigma}}^{\ell }$
are given by,%
\begin{eqnarray}
\widehat{\omega }^{\ell } &=&\widehat{\omega }_{-}^{\ell }+\widehat{\omega }%
_{+}^{\ell };  \label{new reps} \\
\widehat{\omega }_{-}^{\ell } &=&\frac{1+\eta }{2}\func{Dil}_{\frac{1}{N}}%
\widehat{\omega }^{\ell }\text{ and }\widehat{\omega }_{+}^{\ell }=\frac{%
1-\eta }{2}\func{Trans}_{\frac{1}{N^{\ell }}}\limfunc{Ref}\func{Dil}_{\frac{1%
}{N}}\widehat{\omega }^{\ell }\ ,  \notag \\
\widehat{\dot{\sigma}}^{\ell } &=&\widehat{\dot{\sigma}}_{-}^{\ell }+\frac{2%
}{N^{\ell +1}}\delta _{\frac{1}{2N^{\ell }}}+\widehat{\dot{\sigma}}%
_{+}^{\ell };  \notag \\
\widehat{\dot{\sigma}}_{-}^{\ell } &=&\frac{1}{N^{2}}\frac{2}{1+\eta }\func{%
Dil}_{\frac{1}{N}}\widehat{\dot{\sigma}}^{\ell }\text{ and }\widehat{\dot{%
\sigma}}_{+}^{\ell }=\frac{1}{N^{2}}\frac{2}{1-\eta }\func{Trans}_{\frac{1}{%
N^{\ell }}}\limfunc{Ref}\func{Dil}_{\frac{1}{N}}\widehat{\dot{\sigma}}^{\ell
}\ .  \notag
\end{eqnarray}%
With the formulas in ((\ref{new reps})) in hand, the forward testing
condition can now be obtained for $I_{1}^{\ell }$ by repeating verbatim the
above argument for $I_{0}^{1}$ using (\ref{new reps}). Finally, the case of
general $I_{r}^{\ell }$ is handled in exactly the same way but the
replicating formulas are more complicated to write out as the interval $%
I_{r}^{\ell }$ no longer has left endpoint at the origin.

\subsubsection{Completion of the proof for general intervals $I$\label%
{Subsub general I}}

It thus remains only to establish the forward testing condition for an
arbitrary interval $I$ contained in $\left[ 0,1\right] $. The analogous
estimate for the forward testing condition in \cite{LaSaUr2} is actually
more delicate than indicated there, where it was claimed that "the general
case now follows without much extra work", and consequently we will give a
detailed proof here.

So let $I=\left[ a,b\right] $ be an arbitrary \emph{closed} subinterval of $%
\left[ 0,1\right] $. Then there is a unique point $\dot{z}_{r}^{\ell }$ such
that $\widehat{s}_{r}^{\ell }=\sup_{\dot{z}_{j}^{k}\in I}\widehat{s}_{j}^{k}$%
. Then $\left\vert I\setminus I_{r}^{\ell }\right\vert _{\widehat{\dot{\sigma%
}}}=0$ follows from the choice of $\dot{z}_{r}^{\ell }$ and the fact that $I$
is closed. Thus from Lemma \ref{sigma dot} we now obtain 
\begin{equation*}
\left\vert I\right\vert _{\widehat{\dot{\sigma}}}\leq \left\vert I_{r}^{\ell
}\right\vert _{\widehat{\dot{\sigma}}}=\frac{N^{2}-1}{N^{2}-5}\left\vert
G_{r}^{\ell }\right\vert _{\widehat{\dot{\sigma}}}=\frac{N^{2}-1}{N^{2}-5}%
\left\vert \left\{ \dot{z}_{r}^{\ell }\right\} \right\vert _{\widehat{\dot{%
\sigma}}}\leq \frac{N^{2}-1}{N^{2}-5}\left\vert I\right\vert _{\widehat{\dot{%
\sigma}}}\ ,
\end{equation*}%
which shows that $\left\vert I\right\vert _{\widehat{\dot{\sigma}}}\approx
\left\vert \left\{ \dot{z}_{r}^{\ell }\right\} \right\vert _{\widehat{\dot{%
\sigma}}}=\widehat{s}_{r}^{\ell }$. We set 
\begin{equation*}
I_{\limfunc{left}}=\left[ a,\dot{z}_{r}^{\ell }\right) \text{ and }I_{%
\limfunc{right}}=\left( \dot{z}_{r}^{\ell },b\right] .
\end{equation*}%
Then we estimate%
\begin{eqnarray*}
\int_{I}\left\vert H_{\flat }\left( \mathbf{1}_{I}\widehat{\dot{\sigma}}%
\right) \right\vert ^{2}\widehat{\omega } &\leq &3\int_{I}\left\vert
H_{\flat }\left( \mathbf{1}_{I_{\limfunc{left}}}\widehat{\dot{\sigma}}%
\right) \right\vert ^{2}\widehat{\omega }+3\int_{I}\left\vert H_{\flat
}\left( \widehat{s}_{r}^{\ell }\delta _{\dot{z}_{r}^{\ell }}\right)
\right\vert ^{2}\widehat{\omega }+3\int_{I}\left\vert H_{\flat }\left( 
\mathbf{1}_{I_{\limfunc{right}}}\widehat{\dot{\sigma}}\right) \right\vert
^{2}\widehat{\omega } \\
&\leq &3\int_{I_{r}^{\ell }}\left\vert H_{\flat }\left( \mathbf{1}_{I_{%
\limfunc{left}}}\widehat{\dot{\sigma}}\right) \right\vert ^{2}\widehat{%
\omega }+3\int_{I_{r}^{\ell }}\left\vert H_{\flat }\left( \widehat{s}%
_{r}^{\ell }\delta _{\dot{z}_{r}^{\ell }}\right) \right\vert ^{2}\widehat{%
\omega }+3\int_{I_{r}^{\ell }}\left\vert H_{\flat }\left( \mathbf{1}_{I_{%
\limfunc{right}}}\widehat{\dot{\sigma}}\right) \right\vert ^{2}\widehat{%
\omega }
\end{eqnarray*}%
where the second line holds since $\left\vert I\setminus I_{r}^{\ell
}\right\vert _{\widehat{\omega }}=0$ follows from the choice of $\dot{z}%
_{r}^{\ell }$. Since the middle term is trivially estimated by $A_{2}%
\widehat{s}_{r}^{\ell }$, it suffices by symmetry to prove that%
\begin{equation}
\int_{I_{r}^{\ell }}\left\vert H_{\flat }\left( \mathbf{1}_{I_{\limfunc{left}%
}}\widehat{\dot{\sigma}}\right) \right\vert ^{2}\widehat{\omega }\lesssim 
\widehat{s}_{r}^{\ell }\ .  \label{to prove that}
\end{equation}%
Now let $\mathcal{M}_{I_{\limfunc{left}}}\equiv \left\{ J\in \mathcal{D}%
:J\subset I_{\limfunc{left}}\text{ is maximal}\right\} $ consist of the
maximal intervals from $\mathcal{D}$ that lie inside $I_{\limfunc{left}}$.
Then we can order the intervals in $\mathcal{M}_{I_{\limfunc{left}}}$ from
right to left as $\left\{ M_{i}\right\} _{i=1}^{\infty }$ where $%
M_{i}=I_{r_{i}}^{\ell _{i}}$ for a \emph{strictly} increasing sequence $%
\left\{ \ell _{i}\right\} _{i=1}^{\infty }$ with $\ell _{1}>\ell $ (of
course the sequence could be finite but then the proof is similar with
obvious modifications) and a choice of $r_{i}$ so that $M_{i+1}$ lies to the
left of $M_{i}$ for all $i\geq 1$. If $\left( I_{r}^{\ell }\right) _{-}$ is
contained in $I_{\limfunc{left}}$, then $\left( I_{r}^{\ell }\right) _{-}$
is the only maximal interval $M_{1}$ and the estimate (\ref{to prove that})
is then trivial. So we assume from now on that $\left( I_{r}^{\ell }\right)
_{-}\not\subset I_{\limfunc{left}}$. Then the sequence $\left\{
M_{i}\right\} _{i=1}^{\infty }$ is defined inductively as follows.

\begin{enumerate}
\item Define $J\equiv \left( I_{r}^{\ell }\right) _{-}$. If $J_{+}\subset I_{%
\limfunc{left}}$ then we set $M_{1}=J_{+}$. If $J_{+}\not\subset I_{\limfunc{%
left}}$ but $J_{++}\subset I_{\limfunc{left}}$, then we set $M_{1}=J_{++}$.
Otherwise, we continue in this way and let $M_{1}=J_{\overset{m_{1}}{%
\overbrace{++...+}}}$ where $J_{\overset{m_{1}}{\overbrace{++...+}}}$ with $%
m_{1}\geq 1$ is the first such interval that is contained in $I_{\limfunc{%
left}}$.

\item Define $K$ to be the sibling of $M_{1}=J_{\overset{m_{1}}{\overbrace{%
++...+}}}$ in the tree $\mathcal{D}$. Note that $K\not\subset I_{\limfunc{%
left}}$ by the choice of $M_{1}$. If $K_{+}\subset I_{\limfunc{left}}$ then
we set $M_{2}=K_{+}$. If $K_{+}\not\subset I_{\limfunc{left}}$ but $%
K_{++}\subset I_{\limfunc{left}}$, then we set $M_{2}=K_{++}$. Otherwise, we
continue in this way and let $M_{2}=K_{\overset{m_{2}}{\overbrace{++...+}}}$
where $K_{\overset{m_{2}}{\overbrace{++...+}}}$ with $m_{2}\geq 1$ is the
first such interval that is contained in $I_{\limfunc{left}}$.

\item Define $L$ to be the sibling of $K_{\overset{m_{2}}{\overbrace{++...+}}%
}$ in the tree $\mathcal{D}$. Note that $L\not\subset I_{\limfunc{left}}$ by
the choice of $M_{2}$. If $L_{+}\subset I_{\limfunc{left}}$ then we set $%
M_{3}=L_{+}$. If $L_{+}\not\subset I_{\limfunc{left}}$ but $L_{++}\subset I_{%
\limfunc{left}}$, then we set $M_{3}=L_{++}$. Otherwise, we continue in this
way and let $M_{3}=L_{\overset{m_{3}}{\overbrace{++...+}}}$ where $L_{%
\overset{m_{3}}{\overbrace{++...+}}}$ with $m_{3}\geq 1$ is the first such
interval that is contained in $I_{\limfunc{left}}$.

\item Continue to define $M_{4}$, $M_{5}$, $M_{6}$, ... in this way until
the procedure either terminates in a finite sequence $\left\{ M_{i}\right\}
_{i=1}^{N}$ or defines an infinite sequence $\left\{ M_{i}\right\}
_{i=1}^{\infty }$.
\end{enumerate}

Then we estimate%
\begin{eqnarray*}
\int_{I_{r}^{\ell }}\left\vert H_{\flat }\left( \mathbf{1}_{I_{\limfunc{left}%
}}\widehat{\dot{\sigma}}\right) \right\vert ^{2}\widehat{\omega }
&=&\int_{I_{r}^{\ell }}\left\vert H_{\flat }\left( \sum_{i=1}^{\infty }%
\mathbf{1}_{M_{i}}\widehat{\dot{\sigma}}\right) \right\vert ^{2}\widehat{%
\omega }=\sum_{i,j=1}^{\infty }\int_{I_{r}^{\ell }}H_{\flat }\left( \mathbf{1%
}M_{i}\widehat{\dot{\sigma}}\right) \overline{H_{\flat }\left( \mathbf{1}%
_{M_{j}}\widehat{\dot{\sigma}}\right) }\widehat{\omega } \\
&\leq &2\sum_{i=1}^{\infty }\sum_{j=i}^{\infty }\left\vert \int_{I_{r}^{\ell
}}H_{\flat }\left( \mathbf{1}_{M_{i}}\widehat{\dot{\sigma}}\right) \overline{%
H_{\flat }\left( \mathbf{1}_{M_{j}}\widehat{\dot{\sigma}}\right) }\widehat{%
\omega }\right\vert \\
&\leq &2\sum_{i=1}^{\infty }\sum_{j=i}^{\infty }\sqrt{\int_{I_{r}^{\ell
}}\left\vert H_{\flat }\left( \mathbf{1}_{M_{i}}\widehat{\dot{\sigma}}%
\right) \right\vert ^{2}\widehat{\omega }}\sqrt{\int_{I_{r}^{\ell
}}\left\vert H_{\flat }\left( \mathbf{1}M_{j}\widehat{\dot{\sigma}}\right)
\right\vert ^{2}\widehat{\omega }} \\
&\lesssim &\sum_{i=1}^{\infty }\sum_{j=i}^{\infty }\left\{ \frac{1}{\left(
j-i+1\right) ^{2}}\int_{I_{r}^{\ell }}\left\vert H_{\flat }\left( \mathbf{1}%
_{M_{i}}\widehat{\dot{\sigma}}\right) \right\vert ^{2}\widehat{\omega }%
+\left( j-i+1\right) ^{2}\int_{I_{r}^{\ell }}\left\vert H_{\flat }\left( 
\mathbf{1}_{M_{j}}\widehat{\dot{\sigma}}\right) \right\vert ^{2}\widehat{%
\omega }\right\} \\
&\lesssim &\sum_{i=1}^{\infty }\int_{I_{r}^{\ell }}\left\vert H_{\flat
}\left( \mathbf{1}_{M_{i}}\widehat{\dot{\sigma}}\right) \right\vert ^{2}%
\widehat{\omega }+\sum_{j=1}^{\infty }j^{3}\int_{I_{r}^{\ell }}\left\vert
H_{\flat }\left( \mathbf{1}_{M_{j}}\widehat{\dot{\sigma}}\right) \right\vert
^{2}\widehat{\omega } \\
&\lesssim &\sum_{k=1}^{\infty }k^{3}\int_{I_{r}^{\ell }}\left\vert H_{\flat
}\left( \mathbf{1}_{M_{k}}\widehat{\dot{\sigma}}\right) \right\vert ^{2}%
\widehat{\omega }=\sum_{k=1}^{\infty }k^{3}\int_{I_{r}^{\ell }}\left\vert
H_{\flat }\left( \mathbf{1}_{I_{r_{k}}^{\ell _{k}}}\widehat{\dot{\sigma}}%
\right) \right\vert ^{2}\widehat{\omega }.
\end{eqnarray*}

Now we note that if $I,J\in \mathcal{D}$ are disjoint, then $2I\cap
J=\emptyset $ as well. Thus from the testing condition for intervals $I\in 
\mathcal{D}$ that was proved above, together with the $\mathcal{A}_{2}$
condition, we easily obtain the \emph{full} testing condition for the
intervals $I\in \mathcal{D}$:%
\begin{eqnarray*}
\int_{0}^{1}\left\vert H_{\flat }\left( \mathbf{1}_{I}\widehat{\dot{\sigma}}%
\right) \right\vert ^{2}\widehat{\omega } &=&\int_{I}\left\vert H_{\flat
}\left( \mathbf{1}_{I}\widehat{\dot{\sigma}}\right) \right\vert ^{2}\widehat{%
\omega }+\int_{\left[ 0,1\right] \setminus 2I}\left\vert H_{\flat }\left( 
\mathbf{1}_{I}\widehat{\dot{\sigma}}\right) \right\vert ^{2}\widehat{\omega }
\\
&\leq &\mathfrak{T}_{H_{\flat }}\left\vert I\right\vert _{\widehat{\dot{%
\sigma}}}+C_{N}\mathcal{A}_{2}\left\vert I\right\vert _{\widehat{\dot{\sigma}%
}}\lesssim \left\vert I\right\vert _{\widehat{\dot{\sigma}}}\ .
\end{eqnarray*}%
Thus using $\int_{I_{r}^{\ell }}\left\vert H_{\flat }\left( \mathbf{1}%
_{I_{r_{k}}^{\ell _{k}}}\widehat{\dot{\sigma}}\right) \right\vert ^{2}%
\widehat{\omega }\lesssim \left\vert I_{r_{k}}^{\ell _{k}}\right\vert _{%
\widehat{\dot{\sigma}}}$ we obtain%
\begin{equation*}
\int_{I_{r}^{\ell }}\left\vert H_{\flat }\left( \mathbf{1}_{I_{\limfunc{left}%
}}\widehat{\dot{\sigma}}\right) \right\vert ^{2}\widehat{\omega }\lesssim
\sum_{k=1}^{\infty }k^{3}\int_{I_{r}^{\ell }}\left\vert H_{\flat }\left( 
\mathbf{1}_{I_{r_{k}}^{\ell _{k}}}\widehat{\dot{\sigma}}\right) \right\vert
^{2}\widehat{\omega }\lesssim \sum_{k=1}^{\infty }k^{3}\left\vert
I_{r_{k}}^{\ell _{k}}\right\vert _{\widehat{\dot{\sigma}}}\ ,
\end{equation*}%
and it remains to show that%
\begin{equation*}
\sum_{k=1}^{\infty }k^{3}\left\vert I_{r_{k}}^{\ell _{k}}\right\vert _{%
\widehat{\dot{\sigma}}}\lesssim \left\vert I_{r}^{\ell }\right\vert _{%
\widehat{\dot{\sigma}}}\ .
\end{equation*}

But this latter inequality is an easy consequence of the geometric decay of
the numbers $\left\vert I_{r_{k}}^{\ell _{k}}\right\vert _{\widehat{\dot{%
\sigma}}}$ as $k\rightarrow \infty $. Indeed, returning to the inductive
definition of $M_{k}=I_{r_{k}}^{\ell _{k}}$, we see from the inequality $%
\left\vert I_{\pm }\right\vert _{\widehat{\dot{\sigma}}}\leq \frac{2}{%
N^{2}\left( 1-\eta \right) }\left\vert I\right\vert _{\widehat{\dot{\sigma}}%
} $ that

\begin{enumerate}
\item $\left\vert M_{1}\right\vert _{\widehat{\dot{\sigma}}}=\left\vert J_{%
\overset{m_{1}}{\overbrace{++...+}}}\right\vert _{\widehat{\dot{\sigma}}%
}\leq \left( \frac{2}{N^{2}\left( 1-\eta \right) }\right) ^{m_{1}}\left\vert
J\right\vert _{\widehat{\dot{\sigma}}}$;

\item $\left\vert K\right\vert _{\widehat{\dot{\sigma}}}\leq \frac{1+\eta }{%
1-\eta }\left\vert M_{1}\right\vert _{\widehat{\dot{\sigma}}}$ and $%
\left\vert M_{2}\right\vert _{\widehat{\dot{\sigma}}}=\left\vert K_{\overset{%
m_{2}}{\overbrace{++...+}}}\right\vert _{\widehat{\dot{\sigma}}}\leq \left( 
\frac{2}{N^{2}\left( 1-\eta \right) }\right) ^{m_{2}}\left\vert K\right\vert
_{\widehat{\dot{\sigma}}}$;

\item $\left\vert L\right\vert _{\widehat{\dot{\sigma}}}\leq \frac{1+\eta }{%
1-\eta }\left\vert M_{2}\right\vert _{\widehat{\dot{\sigma}}}$ and $%
\left\vert M_{3}\right\vert _{\widehat{\dot{\sigma}}}=\left\vert L_{\overset{%
m_{3}}{\overbrace{++...+}}}\right\vert _{\widehat{\dot{\sigma}}}\leq \left( 
\frac{2}{N^{2}\left( 1-\eta \right) }\right) ^{m_{3}}\left\vert L\right\vert
_{\widehat{\dot{\sigma}}}$;

\item etc.
\end{enumerate}

Thus we see that%
\begin{eqnarray*}
\left\vert M_{1}\right\vert _{\widehat{\dot{\sigma}}} &\leq &\left( \frac{2}{%
N^{2}\left( 1-\eta \right) }\right) ^{m_{1}}\left\vert I_{r}^{\ell
}\right\vert _{\widehat{\dot{\sigma}}}\ , \\
\left\vert M_{2}\right\vert _{\widehat{\dot{\sigma}}} &\leq &\left( \frac{2}{%
N^{2}\left( 1-\eta \right) }\right) ^{m_{2}}\frac{1+\eta }{1-\eta }%
\left\vert M_{1}\right\vert _{\widehat{\dot{\sigma}}}\ , \\
\left\vert M_{3}\right\vert _{\widehat{\dot{\sigma}}} &\leq &\left( \frac{2}{%
N^{2}\left( 1-\eta \right) }\right) ^{m_{3}}\frac{1+\eta }{1-\eta }%
\left\vert M_{2}\right\vert _{\widehat{\dot{\sigma}}}\ , \\
&&etc.
\end{eqnarray*}%
and so%
\begin{eqnarray*}
\left\vert I_{r_{k}}^{\ell _{k}}\right\vert _{\widehat{\dot{\sigma}}}
&=&\left\vert M_{k}\right\vert _{\widehat{\dot{\sigma}}}\leq \left( \frac{2}{%
N^{2}\left( 1-\eta \right) }\right) ^{m_{1}+...+m_{k}}\left( \frac{1+\eta }{%
1-\eta }\right) ^{k-1}\left\vert I_{r}^{\ell }\right\vert _{\widehat{\dot{%
\sigma}}} \\
&\leq &\left( \frac{2}{N^{2}\left( 1-\eta \right) }\frac{1+\eta }{1-\eta }%
\right) ^{k}\left\vert I_{r}^{\ell }\right\vert _{\widehat{\dot{\sigma}}}\ .
\end{eqnarray*}%
Thus we have%
\begin{equation*}
\sum_{k=1}^{\infty }k^{3}\left\vert I_{r_{k}}^{\ell _{k}}\right\vert _{%
\widehat{\dot{\sigma}}}\leq \sum_{k=1}^{\infty }k^{3}\left( \frac{2}{%
N^{2}\left( 1-\eta \right) }\frac{1+\eta }{1-\eta }\right) ^{k}\left\vert
I_{r}^{\ell }\right\vert _{\widehat{\dot{\sigma}}}\leq C_{N}\left\vert
I_{r}^{\ell }\right\vert _{\widehat{\dot{\sigma}}}\ ,
\end{equation*}%
provided $\frac{2\left( 1+\frac{1}{N}\right) }{N^{2}\left( 1-\frac{1}{N}%
\right) ^{2}}<1$, i.e. $N\geq 3$, and this completes the proof of the
forward testing condition for the weight pair $\left( \widehat{\dot{\sigma}},%
\widehat{\omega }\right) $ relative to $H_{\flat }$.

\subsection{The backward testing condition}

The backward testing condition will hold for the weight pair $\left( 
\widehat{\dot{\sigma}},\widehat{\omega }\right) $ because we have arranged
that $H_{\flat }\widehat{\omega }\left( \dot{z}_{j}^{k}\right) =0$. Indeed,
for an interval $I_{r}^{\ell }$ with $\dot{z}_{j}^{k}\in I_{r}^{\ell }$, we
claim that 
\begin{equation}
\left\vert H_{\flat }\left( \mathbf{1}_{I_{r}^{\ell }}\widehat{\omega }%
\right) \left( \dot{z}_{j}^{k}\right) \right\vert \lesssim \mathrm{P}\left(
I_{r}^{\ell },\widehat{\omega }\right) .  \label{claim Poisson'}
\end{equation}%
To see (\ref{claim Poisson'}) let $I_{s}^{\ell -1}$ denote the parent of $%
I_{r}^{\ell }$ and let $I_{r+1}^{\ell }$ denote the other child of $%
I_{s}^{\ell -1}$. Then we have using $H_{\flat }\widehat{\omega }\left( \dot{%
z}_{j}^{k}\right) =0$, 
\begin{eqnarray*}
H_{\flat }\left( \mathbf{1}_{I_{r}^{\ell }}\widehat{\omega }\right) \left( 
\dot{z}_{j}^{k}\right) &=&-H_{\flat }\left( \mathbf{1}_{\left( I_{r}^{\ell
}\right) ^{c}}\widehat{\omega }\right) \left( \dot{z}_{j}^{k}\right) \\
&=&-H_{\flat }\left( \mathbf{1}_{\left( I_{s}^{\ell -1}\right) ^{c}}\widehat{%
\omega }\right) \left( \dot{z}_{j}^{k}\right) -H_{\flat }\left( \mathbf{1}%
_{I_{r+1}^{\ell }}\widehat{\omega }\right) \left( \dot{z}_{j}^{k}\right) .
\end{eqnarray*}%
Using $H_{\flat }\widehat{\omega }\left( {\dot{z}_{r}^{\ell }}\right) =0$ we
also have that%
\begin{eqnarray*}
H_{\flat }\left( \mathbf{1}_{\left( I_{s}^{\ell -1}\right) ^{c}}\widehat{%
\omega }\right) \left( \dot{z}_{j}^{k}\right) &=&H_{\flat }\left( \mathbf{1}%
_{\left( I_{s}^{\ell -1}\right) ^{c}}\widehat{\omega }\right) \left( {\dot{z}%
_{r}^{\ell }}\right) -\left\{ H_{\flat }\left( \mathbf{1}_{\left(
I_{s}^{\ell -1}\right) ^{c}}\widehat{\omega }\right) \left( {\dot{z}%
_{r}^{\ell }}\right) -H_{\flat }\left( \mathbf{1}_{\left( I_{s}^{\ell
-1}\right) ^{c}}\widehat{\omega }\right) \left( \dot{z}_{j}^{k}\right)
\right\} \\
&=&-H_{\flat }\left( \mathbf{1}_{I_{s}^{\ell -1}}\widehat{\omega }\right)
\left( {\dot{z}_{r}^{\ell }}\right) -A,
\end{eqnarray*}%
where 
\begin{equation*}
A\equiv H_{\flat }\left( \mathbf{1}_{\left( I_{s}^{\ell -1}\right) ^{c}}%
\widehat{\omega }\right) \left( {\dot{z}_{r}^{\ell }}\right) -H_{\flat
}\left( \mathbf{1}_{\left( I_{s}^{\ell -1}\right) ^{c}}\widehat{\omega }%
\right) \left( \dot{z}_{j}^{k}\right) .
\end{equation*}%
Combining equalities yields 
\begin{equation*}
H_{\flat }\left( \mathbf{1}_{I_{r}^{\ell }}\widehat{\omega }\right) \left( 
\dot{z}_{j}^{k}\right) =H_{\flat }\left( \mathbf{1}_{I_{s}^{\ell -1}}%
\widehat{\omega }\right) \left( \dot{z}_{r}^{\ell }\right) +A-H_{\flat
}\left( \mathbf{1}_{I_{r+1}^{\ell }}\widehat{\omega }\right) \left( \dot{z}%
_{j}^{k}\right) .
\end{equation*}%
We then have from (\ref{facts for future}) that for $\left( k,j\right) $
such that $\dot{z}_{j}^{k}\in I_{r}^{\ell }$,%
\begin{eqnarray*}
&&\left\vert H_{\flat }\left( \mathbf{1}_{I_{s}^{\ell -1}}\widehat{\omega }%
\right) \left( \dot{z}_{r}^{\ell }\right) \right\vert \lesssim \frac{%
\left\vert I_{s}^{\ell -1}\right\vert _{\widehat{\omega }}}{\left\vert {%
I_{s}^{\ell -1}}\right\vert }\approx \mathrm{P}\left( I_{s}^{\ell -1},%
\widehat{\omega }\right) , \\
&&\left\vert A\right\vert \lesssim \int_{\left( I_{s}^{\ell -1}\right)
^{c}}\left\vert \frac{1}{x-\dot{z}_{r}^{\ell }}-\frac{1}{x-\dot{z}_{j}^{k}}%
\right\vert d\widehat{\omega }\left( x\right) \lesssim \int_{\left(
I_{s}^{\ell -1}\right) ^{c}}\frac{\left\vert I_{r}^{\ell }\right\vert }{%
\left\vert x-\dot{z}_{s}^{\ell -1}\right\vert ^{2}}d\widehat{\omega }\left(
x\right) \lesssim \mathrm{P}\left( I_{r}^{\ell },\widehat{\omega }\right) ,
\\
&&\left\vert H_{\flat }\left( \mathbf{1}_{I_{r+1}^{\ell }}\widehat{\omega }%
\right) \left( \dot{z}_{j}^{k}\right) \right\vert \lesssim \frac{\left\vert
I_{s}^{\ell -1}\right\vert _{\widehat{\omega }}}{\left\vert {I_{s}^{\ell -1}}%
\right\vert }\approx \mathrm{P}\left( I_{s}^{\ell -1},\widehat{\omega }%
\right) ,
\end{eqnarray*}%
which proves (\ref{claim Poisson'}) since $\mathrm{P}\left( I_{s}^{\ell -1},%
\widehat{\omega }\right) \approx \mathrm{P}\left( I_{r}^{\ell },\widehat{%
\omega }\right) $.

Now, using (\ref{claim Poisson'}) and the estimate $\mathrm{P}\left(
I_{r}^{\ell },\widehat{\omega }\right) \approx \frac{\left\vert I_{r}^{\ell
}\right\vert _{\widehat{\omega }}}{\left\vert I_{r}^{\ell }\right\vert }$,
we compute that%
\begin{eqnarray}
\int_{I_{r}^{\ell }}\left\vert H_{\flat }\left( \mathbf{1}_{I_{r}^{\ell }}%
\widehat{\omega }\right) \right\vert ^{2}d\widehat{{\dot{\sigma}}}
&=&\sum_{\left( k,j\right) :\dot{z}_{j}^{k}\in I_{r}^{\ell }}\left\vert
H_{\flat }\left( \mathbf{1}_{I_{r}^{\ell }}\widehat{\omega }\right) \left( {%
\dot{z}}_{j}^{k}\right) \right\vert ^{2}\widehat{s}_{j}^{k}\leq
C\sum_{\left( k,j\right) :\dot{z}_{j}^{k}\in I_{r}^{\ell }}\left\vert 
\mathrm{P}\left( I_{r}^{\ell },\widehat{\omega }\right) \right\vert ^{2}%
\widehat{s}_{j}^{k}  \label{last'} \\
&\lesssim &\left\vert I_{r}^{\ell }\right\vert _{\widehat{{\dot{\sigma}}}%
}\left( \frac{\left\vert I_{r}^{\ell }\right\vert _{\widehat{\omega }}}{%
\left\vert I_{r}^{\ell }\right\vert }\right) ^{2}\lesssim \mathcal{A}%
_{2}\left\vert I_{r}^{\ell }\right\vert _{\widehat{\omega }}.  \notag
\end{eqnarray}%
This proves the case $I=I_{r}^{\ell }$ of the backward testing condition (%
\ref{e.H2}) for the weight pair $\left( \widehat{{\dot{\sigma}}},\widehat{%
\omega }\right) $. The general case of an arbitrary interval $I$ follows
from this case using the argument in Subsubsection \ref{Subsub general I},
where the analogous result for the \emph{forward} testing condition was
obtained - namely the special case $I=I_{r}^{\ell }$ of the \emph{forward}
testing condition was used to obtain the \emph{forward} testing condition
for general intervals $I$.

\subsection{The weak boundedness property}

Here we prove the weak boundedness property for the weight pair $\left( 
\widehat{{\dot{\sigma}}},\widehat{\omega }\right) $ relative to the
flattened Hibert transform $H_{\flat }$, i.e.%
\begin{equation*}
\left\vert \int_{J}H_{\flat }\left( \mathbf{1}_{I}\widehat{{\dot{\sigma}}}%
\right) d\widehat{\omega }\right\vert \leq \mathcal{WBP}_{H_{\flat }}\left( 
\widehat{{\dot{\sigma}}},\widehat{\omega }\right) \sqrt{\left\vert
J\right\vert _{\widehat{\omega }}\left\vert I\right\vert _{\widehat{{\dot{%
\sigma}}}}},
\end{equation*}%
with $\mathcal{WBP}_{H_{\flat }}\left( \widehat{{\dot{\sigma}}},\widehat{%
\omega }\right) <\infty $, for all $I,J\,$with $J\subset 3I$ and $I\subset
3J $. This is accomplished by showing that $H_{\flat }$ satisfies the
following inequality for any weight pair $\left( {\sigma },\omega \right) $:%
\begin{equation*}
\mathcal{WBP}_{H_{\flat }}\left( {\sigma },\omega \right) \leq \min \left\{ 
\mathfrak{T}_{H_{\flat }}\left( {\sigma },\omega \right) ,\mathfrak{T}%
_{H_{\flat }}^{\ast }\left( {\sigma },\omega \right) \right\} +C\mathcal{A}%
_{2}\left( {\sigma },\omega \right) .
\end{equation*}%
This last inequality is proved for the Hilbert transform $H$ in Proposition
2.9 of \cite{LaSaUr2}, and the same proof, which uses the two weight Hardy
inequalities of Muckenhoupt, applies here with little change upon noting
that $K_{\flat }\left( x\right) \approx \frac{1}{x}$ in the Hardy
inequalities.

\subsection{The energy conditions}

We show in the first two subsubsections that the backward energy condition
holds for the weight pair $\left( \widehat{{\dot{\sigma}}},\widehat{\omega }%
\right) $, but not for the weight pair $\left( \widehat{{\sigma }},\widehat{%
\omega }\right) $. Then we show in the third subsubsection that the forward
energy condition holds for the weight pair $\left( \widehat{{\dot{\sigma}}},%
\widehat{\omega }\right) $. These proofs for the energy conditions follow
the corresponding proofs for the energy conditions in \cite{LaSaUr2}, but
are complicated by the estimates for the redistributed measure $\widehat{%
\omega }$ and the reweighted measure $\widehat{{\dot{\sigma}}}$.

\subsubsection{The backward Energy Condition for $\left( \protect\widehat{{%
\dot{\protect\sigma}}},\protect\widehat{\protect\omega }\right) $}

First we show that the backward Energy Condition for the weight pair $\left( 
\widehat{{\dot{\sigma}}},\widehat{\omega }\right) $ holds: 
\begin{equation}
\sum_{r=1}^{\infty }\left\vert I_{r}\right\vert _{\widehat{{\dot{\sigma}}}}%
\mathsf{E}\left( I_{r},\widehat{{\dot{\sigma}}}\right) ^{2}\mathrm{P}\left(
I_{r},\mathbf{1}_{I_{0}}\widehat{\omega }\right) ^{2}\leq \left( \mathcal{E}%
^{\ast }\right) ^{2}\left\vert I_{0}\right\vert _{\widehat{\omega }}.
\label{dual energy}
\end{equation}%
We use the following estimate, which shows that with the energy factor
included, we obtain a strengthening of the $A_{2}$ condition.

\begin{proposition}
\label{p.gettingE}For any interval $I\subset \lbrack 0,1]$, we have the
inequality 
\begin{equation}
\left\vert I\right\vert _{\widehat{{\dot{\sigma}}}}\mathsf{E}\left( I,%
\widehat{{\dot{\sigma}}}\right) ^{2}\mathrm{P}\left( I,\widehat{\omega }%
\right) ^{2}\lesssim \left\vert I\right\vert _{\widehat{\omega }}.
\label{e.gettingE}
\end{equation}
\end{proposition}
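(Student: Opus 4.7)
The plan is to first verify the proposition for the canonical dyadic intervals $I = I_r^\ell$ of the Cantor construction, where everything reduces to the mass and Poisson estimates already tabulated in (\ref{facts for future}); then to reduce a general $I \subset [0,1]$ to this special case by identifying the dominant point mass $\dot{z}_r^\ell$ of $\widehat{\dot{\sigma}}$ in $I$ and using the energy factor $\mathsf{E}(I,\widehat{\dot{\sigma}})^2$ to absorb the resulting mismatch of scales.

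For $I = I_r^\ell$, the identities $\mathrm{P}(I_r^\ell,\widehat{\omega})\approx \kappa_r^\ell(N/2)^\ell$, $|I_r^\ell|_{\widehat{\dot{\sigma}}}\approx(\kappa_r^\ell)^{-1}(2/N^2)^\ell$, and $|I_r^\ell|_{\widehat{\omega}}=2^{-\ell}\kappa_r^\ell$ from (\ref{facts for future}) multiply to give
\[
|I_r^\ell|_{\widehat{\dot{\sigma}}}\,\mathrm{P}(I_r^\ell,\widehat{\omega})^2 \approx \kappa_r^\ell\, 2^{-\ell} = |I_r^\ell|_{\widehat{\omega}},
\]
so (\ref{e.gettingE}) in this case follows at once from $\mathsf{E}(I,\widehat{\dot{\sigma}})\le 1$. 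For a general $I$, I first dispose of the trivial cases: if $I$ meets the support of $\widehat{\dot{\sigma}}$ in at most one point, then either $|I|_{\widehat{\dot{\sigma}}}=0$ or $\mathsf{E}(I,\widehat{\dot{\sigma}})=0$, and the LHS of (\ref{e.gettingE}) vanishes. Otherwise I select the dominant point mass $\dot{z}_r^\ell \in I$ with $\widehat{s}_r^\ell = \sup_{\dot{z}_j^k \in I}\widehat{s}_j^k$; Lemma \ref{sigma dot} and the argument in Subsubsection \ref{Subsub general I} then yield $|I|_{\widehat{\dot{\sigma}}}\approx \widehat{s}_r^\ell$.

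Next I estimate the energy via the variance identity. Applying $(x-x')^2\le 2(x-\dot{z}_r^\ell)^2+2(x'-\dot{z}_r^\ell)^2$ and $|I|_{\widehat{\dot{\sigma}}}\approx \widehat{s}_r^\ell$ yields
\[
|I|_{\widehat{\dot{\sigma}}}\,\mathsf{E}(I,\widehat{\dot{\sigma}})^2 \lesssim \frac{1}{|I|^2}\sum_{\dot{z}_j^k\in I\setminus\{\dot{z}_r^\ell\}} \widehat{s}_j^k (\dot{z}_j^k-\dot{z}_r^\ell)^2.
\]
Since $\widehat{s}_j^k\approx(\kappa_j^k)^{-1}(2/N^2)^k$ decays geometrically in $k$ and the points $\dot{z}_j^k\in I$ at strictly deeper Cantor levels are arranged self-similarly inside the children of $I_r^\ell$, grouping this sum by the common Cantor ancestor of $\dot{z}_j^k$ and $\dot{z}_r^\ell$ should yield the concentration estimate $\mathsf{E}(I,\widehat{\dot{\sigma}})^2\lesssim \min\{1,\ (|I_r^\ell|/|I|)^2\}$. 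Pairing this with the pointwise Poisson expansion (\ref{pointwise}) and the ancestor mass estimates (\ref{inequalities parents}) causes the excess in $\mathrm{P}(I,\widehat{\omega})^2$ arising when $|I|\gg|I_r^\ell|$ to be exactly cancelled by the concentration factor, thereby reducing the bound for $I$ to the canonical estimate $|I_r^\ell|_{\widehat{\dot{\sigma}}}\mathrm{P}(I_r^\ell,\widehat{\omega})^2\approx|I_r^\ell|_{\widehat{\omega}}\lesssim|I|_{\widehat{\omega}}$ from the second paragraph.

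The main obstacle will be making the concentration bound $\mathsf{E}(I,\widehat{\dot{\sigma}})^2\lesssim(|I_r^\ell|/|I|)^2$ precise and uniform in the ratio $|I|/|I_r^\ell|$: it amounts to a careful enumeration of the point masses in $I$ at strictly deeper levels $k>\ell$, grouped by their common Cantor ancestor with $\dot{z}_r^\ell$, and then summing the squared distances $(\dot{z}_j^k-\dot{z}_r^\ell)^2$ against the geometrically decaying weights $\widehat{s}_j^k$. Once this concentration bound is available the Poisson factors align to produce (\ref{e.gettingE}).
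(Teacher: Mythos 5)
Your treatment of the canonical intervals $I=I_r^\ell$ is correct and is exactly the $\mathcal{A}_2$ computation recorded in (\ref{facts for future}), and the selection of the dominant point mass together with the variance bound in your third paragraph is fine. The gap is in the reduction of a general $I$ to this case. The concentration estimate $\mathsf{E}(I,\widehat{\dot{\sigma}})^{2}\lesssim \min \{1,(|I_r^\ell|/|I|)^{2}\}$ is in fact true (every point of the support of $\widehat{\dot{\sigma}}$ in $I$ other than $\dot{z}_r^\ell$ lies in $I_r^\ell$, hence within distance $|I_r^\ell|$ of $\dot{z}_r^\ell$), but it quantifies the wrong thing, and the endpoint of your reduction, $|I_r^\ell|_{\widehat{\omega }}\lesssim |I|_{\widehat{\omega }}$, is false: $I$ need not capture more than a tiny sliver of $I_r^\ell\cap \mathsf{E}^{(N)}$. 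Concretely, let $J=I_m^{\ell+n}$ be the leftmost generation-$(\ell+n)$ descendant of $(I_r^\ell)_{+}$ and take $I=[\dot{z}_r^\ell,b]$ with $b$ the right endpoint of $J$. Then $|I|\approx |I_r^\ell|$, so your concentration factor is $\approx 1$ and $\mathrm{P}(I,\widehat{\omega })\approx \mathrm{P}(I_r^\ell,\widehat{\omega })$; your chain of inequalities therefore terminates at $|I_r^\ell|_{\widehat{\omega }}\approx 2^{-\ell}\kappa _r^\ell$, whereas $|I|_{\widehat{\omega }}=|J|_{\widehat{\omega }}\approx 2^{-\ell-n}\kappa _m^{\ell+n}$ is smaller by a factor on the order of $2^{-n}$, with $n$ arbitrary. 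So the bound you produce is not dominated by the right-hand side of (\ref{e.gettingE}).

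What is missing is that the energy gain must be measured by the \emph{level gap} $n$ between the dominant point mass and the next point mass of $\widehat{\dot{\sigma}}$ lying in $I$, not by the length ratio $|I_r^\ell|/|I|$. This is how the paper proceeds: from the variance identity one keeps the mass ratio rather than the distance ratio, obtaining
$\mathsf{E}(I,\widehat{\dot{\sigma}})^{2}\leq |I\setminus \{\dot{z}_r^\ell\}|_{\widehat{\dot{\sigma}}}/|I|_{\widehat{\dot{\sigma}}}\lesssim \bigl( 2/(N^{2}(1-\eta ))\bigr) ^{n}$,
and one bounds $|I|_{\widehat{\omega }}$ from \emph{below} by $|I_m^{\ell+n}|_{\widehat{\omega }}$ for a generation-$(\ell+n)$ Cantor interval $I_m^{\ell+n}\subset I$ (such an interval exists because $I$ contains both gap centers). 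The proposition then reduces to the arithmetic inequality $(4/N^{2})^{n}(1-\eta )^{-n}\lesssim \kappa _m^{\ell+n}/\kappa _r^\ell$, which holds because the $\kappa $'s change by at most a factor $(N\pm 1)/N$ per generation, so the ratio is at least $((N-1)/(N+1))^{n}$, and this dominates $\bigl( 4/(N^{2}(1-\eta ))\bigr) ^{n}$ once $N\geq 4$. In the example above this supplies precisely the factor $\approx (2/N^{2})^{n}$ that your argument lacks. Your outline can be repaired by replacing the claimed concentration bound with this mass-ratio bound and by replacing the final comparison against $|I_r^\ell|_{\widehat{\omega }}$ with the comparison against $|I_m^{\ell+n}|_{\widehat{\omega }}$.
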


\begin{proof}
We may assume that $\mathsf{E}(I;\sigma )\neq 0$. Let $k$ be the smallest
integer for which there is an $r$ with $\dot{z}_{r}^{k}\in I$. Let $n$ be
the smallest integer so that for some $s$ we have $\dot{z}_{s}^{k+n}\in I$
and $\dot{z}_{s}^{k+n}\neq \dot{z}_{r}^{k}$. We can estimate $\mathsf{E}(I;%
\widehat{{\dot{\sigma}}})$ in terms of $n$ by 
\begin{equation}
\mathsf{E}\left( I,\widehat{{\dot{\sigma}}}\right) ^{2}\lesssim \left( \frac{%
2}{N^{2}\left( 1-\eta \right) }\right) ^{n}\,.  \label{e.En}
\end{equation}%
Indeed, the worst case is when $s$ is not unique. Then there are two choices
of $s$ -- but not more. Let $\dot{z}_{s^{\prime }}^{k+n}\in I$, where $%
s^{\prime }\neq s$. Note that we then have 
\begin{equation*}
\frac{\lvert I-\left\{ \dot{z}_{r}^{k}\right\} \rvert _{\widehat{{\dot{\sigma%
}}}}}{\left\vert I\right\vert _{\widehat{{\dot{\sigma}}}}}\lesssim \frac{%
\max \left\{ \widehat{s}_{s}^{k+n},\widehat{s}_{s^{\prime }}^{k+n}\right\} }{%
\widehat{s}_{r}^{k}}\lesssim \left( \frac{2}{N^{2}\left( 1-\eta \right) }%
\right) ^{n},
\end{equation*}%
and this gives (\ref{e.En}) upon using the characterization of Energy as a
variance:%
\begin{equation*}
\mathsf{E}\left( I,\widehat{{\dot{\sigma}}}\right) ^{2}\leq \frac{1}{%
\left\vert I\right\vert _{\widehat{{\dot{\sigma}}}}}\int_{I}\left\vert \frac{%
x-\dot{z}_{r}^{k}}{\left\vert I\right\vert }\right\vert ^{2}\widehat{{\dot{%
\sigma}}}\left( x\right) \leq \frac{1}{\left\vert I\right\vert _{\widehat{{%
\dot{\sigma}}}}}\int_{I\setminus \left\{ \dot{z}_{r}^{k}\right\} }\widehat{{%
\dot{\sigma}}}\ .
\end{equation*}

Next we note from (\ref{facts for future}) that $\left\vert I\right\vert _{%
\widehat{{\dot{\sigma}}}}\approx \left\vert I_{r}^{k}\right\vert _{\widehat{{%
\dot{\sigma}}}}$ and $\left\vert I\right\vert \approx \left\vert
I_{r}^{k}\right\vert $ and $\frac{\left\vert I_{r}^{k}\right\vert _{\widehat{%
{\dot{\sigma}}}}}{\left\vert I_{r}^{k}\right\vert }\approx \frac{1}{\kappa
_{r}^{k}}\left( \frac{2}{N}\right) ^{k}$, $\frac{\left\vert I\right\vert _{%
\widehat{\omega }}}{\left\vert I_{m}^{k+n}\right\vert }\geq \frac{\left\vert
I_{m}^{k+n}\right\vert _{\widehat{\omega }}}{\left\vert
I_{m}^{k+n}\right\vert }\approx \kappa _{m}^{k+n}\left( \frac{N}{2}\right)
^{k+n}$ for some $m$ so that $I\supset I_{m}^{k+n}$, and $\mathrm{P}\left( I,%
\widehat{\omega }\right) \approx \mathrm{P}\left( I_{r}^{k},\widehat{\omega }%
\right) \simeq \kappa _{r}^{k}\left( \frac{N}{2}\right) ^{k}$ where $\kappa
_{r}^{\ell }\equiv \left( 1+\frac{1}{N}\right) ^{H\left( I_{r}^{\ell
}\right) }\left( 1-\frac{1}{N}\right) ^{T\left( I_{r}^{\ell }\right) }$.
Thus we have estimates for all of the factors on both sides of (\ref%
{e.gettingE}), and the following inequality is sufficient for (\ref%
{e.gettingE}):%
\begin{equation*}
\left[ \left\vert I_{r}^{k}\right\vert \frac{1}{\kappa _{r}^{k}}\left( \frac{%
2}{N}\right) ^{k}\right] \left[ \left( \frac{2}{N^{2}\left( 1-\eta \right) }%
\right) ^{n}\right] \left[ \left( \kappa _{r}^{k}\left( \frac{N}{2}\right)
^{k}\right) ^{2}\right] \lesssim \left\vert I_{m}^{k+n}\right\vert \kappa
_{m}^{k+n}\left( \frac{N}{2}\right) ^{k+n}.
\end{equation*}%
But this reduces to%
\begin{equation*}
\left( \frac{2}{N}\right) ^{2n}\left( \frac{1}{1-\eta }\right) ^{n}\lesssim 
\frac{\kappa _{m}^{k+n}}{\kappa _{r}^{k}}=\frac{\left( 1+\frac{1}{N}\right)
^{H\left( I_{m}^{k+n}\right) }\left( 1-\frac{1}{N}\right) ^{T\left(
I_{m}^{k+n}\right) }}{\left( 1+\frac{1}{N}\right) ^{H\left( I_{r}^{k}\right)
}\left( 1-\frac{1}{N}\right) ^{T\left( I_{r}^{k}\right) }},
\end{equation*}%
which is in fact\ true for all pairs of $n$ and $k$ since $I$ can only
intersect the interval $I_{r}^{k}$ among those in $\mathcal{D}$ at
generation $k$, and thus $I_{m}^{k+n}\subset I_{r}^{k}$, so that $T\left(
I_{m}^{k+n}\right) -T\left( I_{r}^{k}\right) \leq n$ and $H\left(
I_{m}^{k+n}\right) -H\left( I_{r}^{k}\right) \leq n$. Indeed, we then get 
\begin{equation*}
\frac{\left( 1+\frac{1}{N}\right) ^{H\left( I_{m}^{k+n}\right) }\left( 1-%
\frac{1}{N}\right) ^{T\left( I_{m}^{k+n}\right) }}{\left( 1+\frac{1}{N}%
\right) ^{H\left( I_{r}^{k}\right) }\left( 1-\frac{1}{N}\right) ^{T\left(
I_{r}^{k}\right) }}\geq \left( \frac{1-\frac{1}{N}}{1+\frac{1}{N}}\right)
^{n}=\left( \frac{N-1}{N+1}\right) ^{n}\gtrsim \left( \frac{4}{N^{2}\left(
1-\eta \right) }\right) ^{n},
\end{equation*}%
for all $n\geq 1$ provided $\frac{N-1}{N+1}\geq \frac{4}{N^{2}\left( 1-\eta
\right) }$, e.g. if $N\geq 4$.
\end{proof}

It is now clear that the pair of weights $\left( \widehat{{\dot{\sigma}}},%
\widehat{\omega }\right) $ satisfy the backward Energy Condition. Indeed,
let $I_{0}\subset \lbrack 0,1]$ and let $\{I_{r}\;:\;r\geq 1\}$ be any
partition of $I_{0}$. From (\ref{e.gettingE}) we have 
\begin{equation*}
\sum_{r\geq 1}\left\vert I_{r}\right\vert _{\widehat{{\dot{\sigma}}}}\mathsf{%
E}\left( I_{r},\widehat{{\dot{\sigma}}}\right) ^{2}\mathrm{P}\left( I_{r},%
\widehat{\omega }\right) ^{2}\lesssim \sum_{r\geq 1}\left\vert
I_{r}\right\vert _{\widehat{\omega }}=\left\vert I_{0}\right\vert _{\widehat{%
\omega }}\,.
\end{equation*}%
We will next see that the backward energy condition \textbf{fails} for the
weight pair $\left( \widehat{{\sigma }},\widehat{\omega }\right) $ in which $%
\widehat{{\sigma }}$ is no longer a sum of point masses.

\subsubsection{Failure of the backward Energy Condition for $\left( \protect%
\widehat{{\protect\sigma }},\protect\widehat{\protect\omega }\right) $\label%
{Subsubfails}}

We will choose a subdecomposition of $\left[ 0,1\right] $ in which the
backward Energy Condition for the weight pair $\left( \widehat{{\sigma }},%
\widehat{\omega }\right) $ is the same as the backward Pivotal Condition for
the weight pair $\left( \widehat{{\sigma }},\widehat{\omega }\right) $
because the measure $\widehat{\sigma }$ has energy essentially equal to the
`pivotal energy' of $\sigma $ on the intervals $G_{j}^{k}$, more precisely
because $\mathsf{E}\left( G_{j}^{k},\widehat{\sigma }\right) \approx \mathsf{%
E}\left( L_{j}^{k},\widehat{\sigma }\right) \approx 1$. Since the backward
Pivotal Condition fails for the weight pair $\left( \widehat{{\sigma }},%
\widehat{\omega }\right) $, it follows that the backward Energy Condition
fails as well for the weight pair $\left( \widehat{{\sigma }},\widehat{%
\omega }\right) $. Here are the details. We have from (\ref{facts for future}%
) that%
\begin{equation*}
\mathrm{P}\left( G_{r}^{\ell },\widehat{\omega }\right) \approx \mathrm{P}%
\left( I_{r}^{\ell },\widehat{\omega }\right) \approx \frac{\left\vert
I_{r}^{\ell }\right\vert _{\widehat{\omega }}}{\left\vert I_{r}^{\ell
}\right\vert },\ \ \ \ \ \text{for all }r.
\end{equation*}%
Thus for the decomposition $\overset{\cdot }{\bigcup }_{\ell ,r}G_{r}^{\ell
}\subset \left[ 0,1\right] $ we have%
\begin{eqnarray*}
&&\sum_{\ell ,r}\left\vert G_{r}^{\ell }\right\vert _{\widehat{{\sigma }}}%
\mathsf{E}\left( G_{r}^{\ell },\widehat{{\sigma }}\right) ^{2}\mathrm{P}%
\left( G_{r}^{\ell },\widehat{\omega }\right) ^{2}\approx \sum_{\ell
,r}\left\vert G_{r}^{\ell }\right\vert _{\widehat{\sigma }}\mathrm{P}\left(
G_{r}^{\ell },\widehat{\omega }\right) ^{2} \\
&\approx &\sum_{\ell =0}^{\infty }\sum_{r}\frac{\left\vert G_{r}^{\ell
}\right\vert _{\widehat{\sigma }}\left\vert I_{r}^{\ell }\right\vert _{%
\widehat{\omega }}}{\left\vert I_{r}^{\ell }\right\vert ^{2}}\left\vert
I_{r}^{\ell }\right\vert _{\widehat{\omega }}\mathbb{\approx }\sum_{\ell
=0}^{\infty }\sum_{r}\left\vert I_{r}^{\ell }\right\vert _{\widehat{\omega }%
}=\sum_{\ell =0}^{\infty }\left\vert \left[ 0,1\right] \right\vert _{%
\widehat{\omega }}=\infty .
\end{eqnarray*}

\subsubsection{The forward Energy Condition}

It remains to verify that the measure pair $\left( \widehat{\dot{\sigma}},%
\widehat{\omega }\right) $ satisfies the forward Energy Condition. We will
in fact establish the pivotal condition (\ref{pivotalcondition})%
\begin{equation*}
\sum_{r=1}^{\infty }\left\vert I_{r}\right\vert _{\widehat{\omega }}\mathrm{P%
}\left( I_{r},\mathbf{1}_{I_{0}}\widehat{\dot{\sigma}}\right) ^{2}\leq 
\mathcal{P}^{2}\left\vert I_{0}\right\vert _{\widehat{\dot{\sigma}}},
\end{equation*}%
which then implies that $\mathcal{E}\left( \widehat{\dot{\sigma}},\widehat{%
\omega }\right) <\infty $ since $\mathsf{E}\left( I_{r},\mathbf{1}_{I_{0}}%
\widehat{\dot{\sigma}}\right) $. For this it suffices to show that the
forward maximal inequality%
\begin{equation}
\int M\left( f\widehat{\dot{\sigma}}\right) ^{2}d\widehat{\omega }\leq C\int
\left\vert f\right\vert ^{2}d\widehat{\dot{\sigma}}  \label{M2weight}
\end{equation}%
holds for the pair $\left( \widehat{\dot{\sigma}},\widehat{\omega }\right) $%
. Indeed, as is well known, the Poisson integral is an $\ell ^{1}$ average
of a sequence of expanding $\widehat{\dot{\sigma}}$ averages, and thus it is
clearly dominated by the supremum of these averages, and in particular,%
\begin{equation*}
\mathrm{P}\left( I_{r},\mathbf{1}_{I_{0}}\widehat{\dot{\sigma}}\right)
\lesssim \inf_{x\in I_{r}}M\left( \mathbf{1}_{I_{0}}\widehat{\dot{\sigma}}%
\right) .
\end{equation*}%
Then we have%
\begin{equation*}
\sum_{r=1}^{\infty }\left\vert I_{r}\right\vert _{\widehat{\omega }}\mathrm{P%
}\left( I_{r},\mathbf{1}_{I_{0}}\widehat{\dot{\sigma}}\right) ^{2}\lesssim
\sum_{r=1}^{\infty }\left\vert I_{r}\right\vert _{\widehat{\omega }%
}\inf_{x\in I_{r}}M\left( \mathbf{1}_{I_{0}}\widehat{\dot{\sigma}}\right)
^{2}\leq \int M\left( \mathbf{1}_{I_{0}}\widehat{\dot{\sigma}}\right) ^{2}d%
\widehat{\omega }\leq C\int \mathbf{1}_{I_{0}}d\widehat{\dot{\sigma}}\ ,
\end{equation*}%
by (\ref{M2weight}) with $f=\mathbf{1}_{I_{0}}$.

Now (\ref{M2weight}) in turn follows from the testing condition%
\begin{equation}
\int_{Q}M\left( \mathbf{1}_{Q}\widehat{\dot{\sigma}}\right) ^{2}d\widehat{%
\omega }\leq C\int_{Q}d\widehat{\dot{\sigma}},  \label{testmax}
\end{equation}%
for all intervals $Q$ (see \cite{MR676801}). We will show (\ref{testmax})
when $Q=I_{r}^{\ell }$, the remaining cases being an easy consequence of
this one. Indeed, let $\dot{z}_{r}^{\ell }$ be the point in $Q$ with
smallest $\ell $. Inequality (\ref{testmax}) then remains unaffected by
replacing $Q$ with $Q\cap I_{r}^{\ell }$. Next we increase $Q\cap
I_{r}^{\ell }$ to $I_{r}^{\ell }$ and note the left hand side of (\ref%
{testmax}) increases while the right hand side remains comparable. Thus we
have reduced matters to checking (\ref{testmax}) when $Q=I_{r}^{\ell }$.

For the case $Q=I_{r}^{\ell }$ of (\ref{testmax}), we use the fact from (\ref%
{facts for future}) that%
\begin{equation}
\mathcal{M}\left( \mathbf{1}_{I_{r}^{\ell }}\widehat{\dot{\sigma}}\right)
\left( x\right) \leq C\frac{1}{\kappa _{r}^{\ell }}\left( \frac{2}{N}\right)
^{\ell },\ \ \ \ \ x\in \mathsf{E}^{\left( N\right) }\cap I_{r}^{\ell },
\label{Msigma bounded}
\end{equation}%
where $\kappa _{r}^{\ell }\equiv \left( 1+\frac{1}{N}\right) ^{H\left(
I_{r}^{\ell }\right) }\left( 1-\frac{1}{N}\right) ^{T\left( I_{r}^{\ell
}\right) }$ as in (\ref{def kappa}). To see (\ref{Msigma bounded}), note
that for each $x\in I_{r}^{\ell }$ that also lies in the Cantor set $\mathsf{%
E}^{\left( N\right) }$, we have%
\begin{equation*}
\mathcal{M}\left( \mathbf{1}_{I_{r}^{\ell }}\widehat{\dot{\sigma}}\right)
\left( x\right) \lesssim \sup_{\left( k,j\right) :x\in I_{j}^{k}}\frac{1}{%
\left\vert I_{j}^{k}\right\vert }\int_{I_{j}^{k}\cap I_{r}^{\ell }}d\widehat{%
\dot{\sigma}}\approx \sup_{\left( k,j\right) :x\in I_{j}^{k}}\frac{1}{\kappa
_{j}^{k}}\frac{\left( \frac{1}{N}\right) ^{k\vee \ell }\left( \frac{2}{N}%
\right) ^{k\vee \ell }}{\left( \frac{1}{N}\right) ^{k}}\approx \frac{1}{%
\kappa _{r}^{\ell }}\left( \frac{2}{N}\right) ^{\ell }.
\end{equation*}%
Now we consider for each fixed $m$, the approximations $\widehat{\omega }%
^{\left( m\right) }$ and $\widehat{\dot{\sigma}}^{\left( m\right) }$ to the
measures $\widehat{\omega }$ and $\widehat{\dot{\sigma}}$ given in (\ref%
{approximations}). For these approximations we have in the same way the
estimate%
\begin{equation*}
\mathcal{M}\left( \mathbf{1}_{I_{r}^{\ell }}\widehat{\dot{\sigma}}^{\left(
m\right) }\right) \left( x\right) \leq C\frac{1}{\kappa _{r}^{\ell }}\left( 
\frac{2}{N}\right) ^{\ell },\ \ \ \ \ x\in \bigcup_{i=1}^{2^{m}}I_{i}^{m}.
\end{equation*}%
Thus for each $m\geq 1$ we have%
\begin{eqnarray*}
\int_{I_{r}^{\ell }}\mathcal{M}\left( \mathbf{1}_{I_{r}^{\ell }}\widehat{%
\dot{\sigma}}^{\left( m\right) }\right) ^{2}d\widehat{\omega }^{\left(
m\right) } &\leq &C\sum_{i:I_{i}^{m}\subset I_{r}^{\ell }}\left( \frac{1}{%
\kappa _{r}^{\ell }}\right) ^{2}\left( \frac{2}{N}\right) ^{2\ell }\kappa
_{i}^{m}2^{-m} \\
&\leq &C2^{m-\ell }\frac{1}{\kappa _{r}^{\ell }}\left( \frac{2}{N}\right)
^{2\ell }2^{-m}=C\widehat{s}_{r}^{\ell }\approx C\int_{I_{r}^{\ell }}d%
\widehat{\dot{\sigma}}.
\end{eqnarray*}

Taking the limit as $m\rightarrow \infty $ yields the case $Q=I_{r}^{\ell }$
of (\ref{testmax}). Indeed, from the Monotone Convergence Theorem we have%
\begin{equation*}
\int_{I_{r}^{\ell }}\mathcal{M}\left( \mathbf{1}_{I_{r}^{\ell }}\widehat{%
\dot{\sigma}}\right) ^{2}d\widehat{\omega }=\lim_{n\rightarrow \infty
}\int_{I_{r}^{\ell }}\mathcal{M}\left( \mathbf{1}_{I_{r}^{\ell }}\widehat{%
\dot{\sigma}}^{\left( n\right) }\right) ^{2}d\widehat{\omega }.
\end{equation*}%
From the continuity of $\mathcal{M}\left( \mathbf{1}_{I_{r}^{\ell }}\widehat{%
\dot{\sigma}}^{\left( n\right) }\right) ^{2}$ on the support of $d\widehat{%
\omega }^{\left( 2n\right) }$, and the fact that the supports of $\widehat{%
\omega }^{\left( m\right) }$ are decreasing, we have%
\begin{equation*}
\int_{I_{r}^{\ell }}\mathcal{M}\left( \mathbf{1}_{I_{r}^{\ell }}\widehat{%
\dot{\sigma}}^{\left( n\right) }\right) ^{2}d\widehat{\omega }%
=\lim_{m\rightarrow \infty }\int_{I_{r}^{\ell }}\mathcal{M}\left( \mathbf{1}%
_{I_{r}^{\ell }}\widehat{\dot{\sigma}}^{\left( n\right) }\right) ^{2}d%
\widehat{\omega }^{\left( m\right) }.
\end{equation*}%
For $m\geq n$, we have 
\begin{equation*}
\int_{I_{r}^{\ell }}\mathcal{M}\left( \mathbf{1}_{I_{r}^{\ell }}\widehat{%
\dot{\sigma}}^{\left( n\right) }\right) ^{2}d\widehat{\omega }^{\left(
m\right) }\leq \int_{I_{r}^{\ell }}\mathcal{M}\left( \mathbf{1}_{I_{r}^{\ell
}}\widehat{\dot{\sigma}}^{\left( m\right) }\right) ^{2}d\widehat{\omega }%
^{\left( m\right) }\leq C\int_{I_{r}^{\ell }}d\widehat{\dot{\sigma}}
\end{equation*}%
by monotonicity, and so altogether%
\begin{equation*}
\int_{I_{r}^{\ell }}\mathcal{M}\left( \mathbf{1}_{I_{r}^{\ell }}\widehat{%
\dot{\sigma}}\right) ^{2}d\widehat{\omega }=\lim_{n\rightarrow \infty
}\lim_{m\rightarrow \infty }\int_{I_{r}^{\ell }}\mathcal{M}\left( \mathbf{1}%
_{I_{r}^{\ell }}\widehat{\dot{\sigma}}^{\left( n\right) }\right) ^{2}d%
\widehat{\omega }^{\left( m\right) }\leq C\int_{I_{r}^{\ell }}d\widehat{\dot{%
\sigma}}\ .
\end{equation*}%
This completes our proof of the pivotal condition, and hence also the
forward Energy Condition for the weight pair $\left( \widehat{\dot{\sigma}},%
\widehat{\omega }\right) $.

\subsection{The norm inequality}

Here we show that the norm inequality for $H_{\flat }$ holds with respect to
the weight pair $\left( \widehat{\sigma },\widehat{\omega }\right) $. We
first observe that we have already established above the following facts for
the other weight pair $\left( \widehat{\dot{\sigma}},\widehat{\omega }%
\right) $.

\begin{enumerate}
\item The Muckenhoupt/NTV condition $\mathcal{A}_{2}$ holds:%
\begin{equation*}
\sup_{I}\mathrm{P}(I,\widehat{\omega })\cdot \mathrm{P}(I,\widehat{\dot{%
\sigma}})=\mathcal{A}_{2}<\infty .
\end{equation*}

\item The forward testing condition holds:%
\begin{equation*}
\int_{I}\left\vert H_{\flat }\left( \mathbf{1}_{I}\widehat{\dot{\sigma}}%
\right) \right\vert ^{2}d\widehat{\omega }\lesssim \left\vert I\right\vert _{%
\widehat{\dot{\sigma}}}\ .
\end{equation*}

\item The backward testing condition holds:%
\begin{equation*}
\int_{I}\left\vert H_{\flat }\left( \mathbf{1}_{I}\widehat{\omega }\right)
\right\vert ^{2}d\widehat{\dot{\sigma}}\lesssim \left\vert I\right\vert _{%
\widehat{\omega }}\ .
\end{equation*}

\item The weak boundedness property holds:%
\begin{equation*}
\left\vert \int_{J}H_{\flat }\left( \mathbf{1}_{I}\widehat{{\dot{\sigma}}}%
\right) d\widehat{\omega }\right\vert \lesssim \sqrt{\left\vert J\right\vert
_{\widehat{\omega }}\left\vert I\right\vert _{\widehat{{\dot{\sigma}}}}}\ .
\end{equation*}

\item The forward energy condition holds:%
\begin{equation*}
\sum_{J}\left( \frac{\mathrm{P}\left( J,\mathbf{1}_{I}\widehat{\dot{\sigma}}%
\right) }{\left\vert J\right\vert }\right) ^{2}\left\Vert \mathsf{P}_{J}^{%
\widehat{\omega }}x\right\Vert _{L^{2}\left( \widehat{\omega }\right)
}^{2}\lesssim \left\vert I\right\vert _{\widehat{\dot{\sigma}}}\ .
\end{equation*}

\item The backward energy condition holds:%
\begin{equation*}
\sum_{J}\left( \frac{\mathrm{P}\left( J,\mathbf{1}_{I}\widehat{\omega }%
\right) }{\left\vert J\right\vert }\right) ^{2}\left\Vert \mathsf{P}_{J}^{%
\widehat{\dot{\sigma}}}x\right\Vert _{L^{2}\left( \widehat{\dot{\sigma}}%
\right) }^{2}\lesssim \left\vert I\right\vert _{\widehat{\omega }}\ .
\end{equation*}
\end{enumerate}

Now we can apply our $T1$ theorem with an energy side condition\ in \cite%
{SaShUr7} (or see \cite{SaShUr6} or \cite{SaShUr9}) to obtain the dual norm
inequality%
\begin{equation*}
\int \left\vert H_{\flat }\left( g\widehat{\omega }\right) \right\vert ^{2}d%
\widehat{\dot{\sigma}}\lesssim \int \left\vert g\right\vert ^{2}d\widehat{%
\omega }.
\end{equation*}%
Consider the weight pair $\left( \widehat{\sigma },\widehat{\omega }\right) $%
. Since $H_{\flat }\left( g\widehat{\omega }\right) $ is constant on each
interval $L_{j}^{k}$, we see that $\int \left\vert H_{\flat }\left( g%
\widehat{\omega }\right) \right\vert ^{2}d\widehat{\sigma }=\int \left\vert
H_{\flat }\left( g\widehat{\omega }\right) \right\vert ^{2}d\widehat{\dot{%
\sigma}}$, and hence the dual norm inequality for $H_{\flat }$ holds with
respect to the weight pair $\left( \widehat{\sigma },\widehat{\omega }%
\right) $.

Thus we have just shown that the norm inequality for the elliptic singular
integral $H_{\flat }$ holds with respect to the weight pair $\left( \widehat{%
\sigma },\widehat{\omega }\right) $, and in Subsubsection \ref{Subsubfails}
we showed that the backward energy condition fails for $\left( \widehat{%
\sigma },\widehat{\omega }\right) $. This completes the proof of Theorem \ref%
{energy condition fails}.

\section{Energy reversal and the $T1$ theorem}

Here we prove Theorem \ref{gradient elliptic} by first establishing reversal
of energy and the necessity of the energy conditions, and then applying the
main result from \cite{SaShUr9} or \cite{SaShUr10}. For the reader's
benefit, we recall the relevant $1$-dimensional version of Theorem 2 in \cite%
{SaShUr10} (note that our energy conditions $\mathcal{E}+\mathcal{E}^{\ast
}<\infty $ imply the strong energy conditions $\mathcal{E}^{\limfunc{strong}%
}+\mathcal{E}^{\ast ,\limfunc{strong}}<\infty $ assumed in Theorem 2 in \cite%
{SaShUr10}).

\begin{theorem}
\label{one dim theorem}Suppose that $T$ is a standard singular integral
operator on $\mathbb{R}$, and that $\omega $ and $\sigma $ are locally
finite positive Borel measures on $\mathbb{R}$. Set $T_{\sigma }f=T\left(
f\sigma \right) $ for any smooth truncation of $T_{\sigma }$. Then the
operator $T_{\sigma }$ is bounded from $L^{2}\left( \sigma \right) $ to $%
L^{2}\left( \omega \right) $, i.e. 
\begin{equation*}
\left\Vert T_{\sigma }f\right\Vert _{L^{2}\left( \omega \right) }\leq 
\mathfrak{N}_{T_{\sigma }}\left\Vert f\right\Vert _{L^{2}\left( \sigma
\right) },
\end{equation*}%
uniformly in smooth truncations of $T$, and moreover%
\begin{equation*}
\mathfrak{N}_{T_{\sigma }}\leq C\left( \sqrt{\mathfrak{A}_{2}}+\mathfrak{T}%
_{T}+\mathfrak{T}_{T}^{\ast }+\mathcal{E}+\mathcal{E}^{\ast }\right) ,
\end{equation*}%
provided that the four Muckenhoupt conditions hold, i.e. $\mathfrak{A}%
_{2}<\infty $, and the two dual testing conditions (\ref{T testing}) for $T$
hold, and provided that the two dual energy conditions (\ref{energy
condition}) and (\ref{dual energy condition}) hold.
\end{theorem}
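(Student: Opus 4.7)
The plan is to prove Theorem \ref{one dim theorem} along the lines of the Nazarov--Treil--Volberg/Lacey--Sawyer--Shen--Uriarte-Tuero scheme, suitably adapted to accommodate common point masses by replacing the classical two-tailed $\mathcal{A}_2$ with the fraktur $\mathfrak{A}_2$ (the sum of the two one-tailed offset conditions with holes and the two punctured conditions). By standard arguments reducing the problem to smooth compactly truncated operators and invoking the Muckenhoupt control of truncation independence, it suffices to bound $\langle T_\sigma f, g\rangle_\omega$ uniformly for $f\in L^2(\sigma)$, $g\in L^2(\omega)$ by the sum of constants on the right hand side. First I would introduce a pair of random dyadic grids $\mathcal{D}^\sigma,\mathcal{D}^\omega$ on the line and dispose of ``bad'' intervals \emph{\`a la} NTV, so that only ``good'' intervals $I\in\mathcal{D}^\sigma$ and $J\in\mathcal{D}^\omega$ survive; the common point mass issue is finessed by using the martingale (Haar) decompositions $\bigtriangleup_I^\sigma f$ and $\bigtriangleup_J^\omega g$ adapted to the two measures separately, which are orthogonal in their respective $L^2$ spaces even when $\sigma$ and $\omega$ charge the same points, provided the fraktur $\mathfrak{A}_2$ hypothesis is in force.

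Next I would expand the bilinear form as
\begin{equation*}
\langle T_\sigma f, g\rangle_\omega \;=\; \sum_{I\in\mathcal{D}^\sigma,\,J\in\mathcal{D}^\omega}\langle T_\sigma(\bigtriangleup_I^\sigma f),\bigtriangleup_J^\omega g\rangle_\omega,
\end{equation*}
and split the sum according to the relative position of $I$ and $J$ into four pieces: the \emph{disjoint form} (where $I\cap J=\emptyset$), the \emph{nearby form} (where $I\cap J\neq\emptyset$ but $I,J$ are comparable in size), and the two \emph{far form} pieces $\sum_{J\Subset I}$ and $\sum_{I\Subset J}$ where one interval is deeply contained in the other. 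The disjoint and nearby pieces are dispatched by $\mathfrak{A}_2$, the weak boundedness (which, as for the $H_\flat$ argument above, is a consequence of testing plus $\mathfrak{A}_2$), and standard kernel estimates combined with Poisson tail control. For the two far forms, one invokes by symmetry the Monotonicity/Energy Lemma: on the scale where $J\Subset I$, a first-order Taylor approximation of $K(\cdot,y)$ on $J$ allows one to replace $T_\sigma(\bigtriangleup_I^\sigma f)$ on $J$ by a product of a Poisson $\mathrm{P}(J,\mathbf 1_{I\setminus J}\sigma)$ and the $\omega$-expectation of $x$ on $J$, modulo a controllable $\mathfrak A_2$ error.

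At this point one carries out the \emph{corona decomposition}: one constructs stopping times $\mathcal{F}\subset\mathcal{D}^\sigma$ based both on $L^2(\sigma)$-averages of $f$ (Calder\'on--Zygmund stopping) and, more delicately, on energy increments, so that inside each corona $\mathcal{C}_F$ the relevant energy ``size functional'' is bounded by $\mathcal{E}$. The far form is then reorganized into (i) the \emph{paraproduct form}, which is estimated by the forward testing constant $\mathfrak{T}_T$ together with Carleson embedding for the stopping tree; (ii) the \emph{commutator/neighbour form}, which is handled by $\mathfrak{A}_2$ and kernel smoothness; and (iii) the \emph{stopping form},
\begin{equation*}
\mathsf{B}_{\mathrm{stop}}(f,g)\;=\;\sum_{F\in\mathcal{F}}\sum_{\substack{I\in\mathcal{C}_F \\ J\Subset_\rho I}}\bigl(\mathbb{E}_{I_J}^\sigma|\bigtriangleup_I^\sigma f|\bigr)\,\mathrm{P}(J,\mathbf 1_{F\setminus I_J}\sigma)\,\langle x,\bigtriangleup_J^\omega g\rangle_\omega,
\end{equation*}
which is precisely where the energy condition $\mathcal{E}$ is indispensable; the symmetric dual far form is treated identically using $\mathcal{E}^*$.

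The main obstacle, as in all previous $T1$ proofs of this type, is the control of the stopping form: one must show that after the energy stopping refinement of $\mathcal{F}$, the stopping form is controlled by $(\mathcal{E}+\sqrt{\mathfrak A_2})\|f\|_{L^2(\sigma)}\|g\|_{L^2(\omega)}$. The strategy is the recursive ``size lemma'' / ``straddling lemma'' argument of Lacey (as refined in \cite{SaShUr10}): one organises the stopping pairs $(I,J)$ inside a corona by the depth of $J$ below $I$, applies Cauchy--Schwarz in $J$ pulling out the factor $\|\mathsf{P}_J^\omega x\|_{L^2(\omega)}^2$, sums over $J$ using the energy condition applied to the Carleson decomposition obtained from the stopping cubes, and finally resums geometrically using the construction of the energy stopping times to beat the accumulation. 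Once this estimate is in place the theorem follows by combining all the form estimates, yielding the claimed control of $\mathfrak N_{T_\sigma}$ by $\sqrt{\mathfrak A_2}+\mathfrak T_T+\mathfrak T_T^*+\mathcal E+\mathcal E^*$; a final removal-of-truncation and standard limiting argument upgrades the bound to arbitrary admissible truncations.
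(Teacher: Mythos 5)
You should note first that the paper does not prove Theorem \ref{one dim theorem} at all: it is recalled verbatim as the one-dimensional case of Theorem 2 of \cite{SaShUr10} (see also \cite{SaShUr9}) and is used as a black box in the proof of Theorem \ref{gradient elliptic}. So there is no in-paper proof to compare with; what you have written is an outline of the argument of those references, and at that level the architecture is the right one (random dyadic grids and good/bad reduction, Haar expansions adapted to each measure, splitting into disjoint, nearby and far forms, corona construction with Calder\'on--Zygmund and energy stopping times, paraproduct/neighbour/stopping decomposition, and a Lacey-type recursion for the stopping form).

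Two steps in your sketch are, however, genuinely incomplete as justified. First, you insert a weak boundedness property and claim it ``is a consequence of testing plus $\mathfrak{A}_2$, as for the $H_\flat$ argument above''; but that argument (Proposition 2.9 of \cite{LaSaUr2}, which is what the present paper invokes) uses the two-weight Hardy inequalities and the comparability $K_{\flat}(x)\approx \frac{1}{x}$, i.e.\ positivity of the kernel away from the diagonal, and this fails for a general standard kernel on the line, which may change sign or vanish. The theorem as stated carries no $\mathcal{WBP}$ hypothesis precisely because \cite{SaShUr10} eliminates it by a good-$\lambda$/surgery mechanism; you must either reproduce that mechanism or absorb the nearby/diagonal form directly by surgery together with the punctured Muckenhoupt conditions, not by a Hardy-type bound. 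Second, your passage from the far form to the energy condition skips the \emph{functional energy} step: the corona/stopping argument does not use the energy condition directly, but rather a functional energy condition over Whitney-type collections below the stopping intervals, and a separate delicate proposition (the Intertwining Proposition together with the control of functional energy by $\mathcal{E}+\sqrt{\mathfrak{A}_2}$) is needed to reduce it to the stated hypotheses. Relatedly, the common point-mass issue is not settled by Haar orthogonality in the two measures; the punctured conditions $A_2^{\mathrm{punct}}$, $A_2^{\ast,\mathrm{punct}}$ are needed in the Monotonicity/Energy Lemma and in the diagonal estimates, which is exactly where the classical product $\mathcal{A}_2$ would blow up on a shared atom. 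As a roadmap your proposal matches the cited proof; these are the places where the actual work lies and where, as written, your argument would not go through.
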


So suppose our kernel $K\left( x,y\right) $ is gradient elliptic, i.e.
satisfies 
\begin{equation*}
\frac{\partial }{\partial x}K\left( x,y\right) ,-\frac{\partial }{\partial y}%
K\left( x,y\right) \geq \frac{c}{\left( x-y\right) ^{2}}
\end{equation*}%
Then for a positive measure $\mu $ supported outside an interval $J$, and
for $x,z\in J$ with $x>z$, we have%
\begin{eqnarray*}
T\mu \left( x\right) -T\mu \left( z\right) &=&\int_{\mathbb{R}}\left[
K\left( x,y\right) -K\left( z,y\right) \right] d\mu \left( y\right) =\int_{%
\mathbb{R}}\left[ \int_{z}^{x}\frac{\partial }{\partial x}K\left( t,y\right)
dt\right] d\mu \left( y\right) \\
&\geq &\int_{\mathbb{R}}\left[ \int_{z}^{x}\frac{c}{\left( t-y\right) ^{2}}dt%
\right] d\mu \left( y\right) =c\left( x-z\right) \int_{\mathbb{R}}\frac{d\mu
\left( y\right) }{\left( x-y\right) \left( z-y\right) } \\
&\geq &\frac{c}{4}\left( x-z\right) \int_{\mathbb{R}}\frac{d\mu \left(
y\right) }{\left( c_{J}-y\right) ^{2}}\approx \frac{c}{4}\left( x-z\right) 
\frac{\mathsf{P}\left( J,\mu \right) }{\left\vert J\right\vert },
\end{eqnarray*}%
and hence%
\begin{eqnarray*}
&&\int_{J}\int_{J}\left\vert T\mu \left( x\right) -T\mu \left( z\right)
\right\vert ^{2}d\omega \left( x\right) d\omega \left( z\right) \\
&\geq &\frac{c^{2}}{16}\left( \frac{\mathsf{P}\left( J,\mu \right) }{%
\left\vert J\right\vert }\right) ^{2}\int_{J}\int_{J}\left\vert
x-z\right\vert ^{2}d\omega \left( x\right) d\omega \left( z\right) \\
&=&\frac{c^{2}}{8}\left( \frac{\mathsf{P}\left( J,\mu \right) }{\left\vert
J\right\vert }\right) ^{2}\left\vert J\right\vert _{\omega
}\int_{J}\left\vert x-\mathbb{E}_{L}^{\omega }x\right\vert ^{2}d\omega
\left( x\right) \\
&=&\frac{c^{2}}{8}\left( \frac{\mathsf{P}\left( J,\mu \right) }{\left\vert
J\right\vert }\right) ^{2}\left\vert J\right\vert _{\omega }\left\Vert 
\mathsf{P}_{J}^{\omega }\mathbf{x}\right\Vert _{L^{2}\left( \omega \right)
}^{2}\ .
\end{eqnarray*}%
It now follows, using the definition of the punctured Muckenhoupt condition $%
A_{2}^{\limfunc{punct}}$ from \cite{SaShUr9}, that if $\overset{\cdot }{%
\dbigcup }J_{n}\subset I$, then%
\begin{eqnarray*}
&&\sum_{n}\left( \frac{\mathsf{P}\left( J_{n},\mathbf{1}_{I}\mu \right) }{%
\left\vert J_{n}\right\vert }\right) ^{2}\left\Vert \mathsf{P}%
_{J_{n}}^{\omega }\mathbf{x}\right\Vert _{L^{2}\left( \omega \right) }^{2} \\
&\lesssim &\sum_{n}\left( \frac{\mathsf{P}\left( J_{n},\mathbf{1}_{J_{n}}\mu
\right) }{\left\vert J_{n}\right\vert }\right) ^{2}\left\Vert \mathsf{P}%
_{J_{n}}^{\omega }\mathbf{x}\right\Vert _{L^{2}\left( \omega \right)
}^{2}+\sum_{n}\left( \frac{\mathsf{P}\left( J_{n},\mathbf{1}_{I\setminus
J_{n}}\mu \right) }{\left\vert J_{n}\right\vert }\right) ^{2}\left\Vert 
\mathsf{P}_{J_{n}}^{\omega }\mathbf{x}\right\Vert _{L^{2}\left( \omega
\right) }^{2} \\
&\lesssim &A_{2}^{\limfunc{punct}}\sum_{n}\left\vert J_{n}\right\vert _{\mu
}+\sum_{n}\frac{1}{\left\vert J_{n}\right\vert _{\omega }}%
\int_{J_{n}}\int_{J_{n}}\left\vert T\left( \mathbf{1}_{I\setminus J_{n}}\mu
\right) \left( x\right) -T\left( \mathbf{1}_{I\setminus J_{n}}\mu \right)
\left( z\right) \right\vert ^{2}d\omega \left( x\right) d\omega \left(
z\right) \\
&\lesssim &A_{2}^{\limfunc{punct}}\left\vert I\right\vert _{\mu
}+\sum_{n}\int_{J_{n}}\left\vert T\left( \mathbf{1}_{I\setminus J_{n}}\mu
\right) \left( x\right) \right\vert ^{2}d\omega \left( x\right) \\
&\lesssim &A_{2}^{\limfunc{punct}}\left\vert I\right\vert _{\mu
}+\sum_{n}\int_{J_{n}}\left\vert T\left( \mathbf{1}_{J_{n}}\mu \right)
\left( x\right) \right\vert ^{2}d\omega \left( x\right)
+\sum_{n}\int_{J_{n}}\left\vert T\left( \mathbf{1}_{I}\mu \right) \left(
x\right) \right\vert ^{2}d\omega \left( x\right) \\
&\lesssim &A_{2}^{\limfunc{punct}}\left\vert I\right\vert _{\mu }+\sum_{n}%
\mathfrak{T}\left\vert J_{n}\right\vert _{\mu }+\mathfrak{T}\left\vert
I\right\vert _{\mu }\lesssim \left( A_{2}^{\limfunc{punct}}+\mathfrak{T}%
\right) \left\vert I\right\vert _{\mu }\ .
\end{eqnarray*}%
This shows that the energy conditions are controlled by the punctured
Muckenhoupt conditions and the testing conditions, and now an application of
Theorem \ref{one dim theorem} completes the proof of Theorem \ref{gradient
elliptic}.

\end{document}